\numberwithin{equation}{section}
\numberwithin{equation}{section}\newtheorem{theorem}{Theorem}[section]
\newtheorem{corollary}[theorem]{Corollary}\newtheorem{lemma}[theorem]{Lemma}
\newtheorem{proposition}[theorem]{Proposition}\theoremstyle{remark}
\newtheorem{remark}{Remark}[section]
\theoremstyle{definition}
\newtheorem{definition}[theorem]{Definition}
\newcommand{\p}{\widetilde{p}}
\newcommand{\q}{\widetilde{q}}
\newcommand{\Rtre}{\mathbb{R}^{3}}
\newcommand{\RT}{\mathbb{R}^{3}}
\newcommand{\Rpiu}{\mathbb{R}^{+}}
\title[Angular Integrability and Navier--Stokes equation]
{On the regularity set and angular integrability for the 
Navier--Stokes equation}
\date{\today}    %%% ''\date{}'' to omit date
\author{Piero D'Ancona}
\address{Piero D'Ancona: 
SAPIENZA --- Universit\`a di Roma,
Dipartimento di Matematica, 
Piazzale A.~Moro 2, I-00185 Roma, Italy}
\email{dancona@mat.uniroma1.it}
\author{Renato Luc\`a}
\address{Renato Luc\`a: 
Instituto de Ciencias Matematicas, 
Consejo de Investigaciones Cientificas, 
C. Nicolas Cabrera 13-15, 28049 Madrid, Spain}
\email{renato.luca@icmat.es}
\thanks{The authors are partially supported by the
Italian Project FIRB 2012 ``Dispersive
dynamics: Fourier Analysis and Variational Methods''.
The second author is supported by the
ERC grant 277778 and MINECO grant SEV-2011-0087 (Spain)}
\subjclass[2000]{35Q30% Navier-Stokes
,35K55 % NL parabolic
,42B20}% Singular integrals
\begin{document}
\begin{abstract}
  We investigate the size of the regular set for suitable weak
  solutions of the Navier--Stokes equation, in the sense of
  Caffarelli--Kohn--Nirenberg \cite{CKN}.
  We consider initial data
  in weighted Lebesgue spaces with mixed radial-angular 
  integrability, and we
  prove that the regular set increases if the data
  have higher angular integrability, invading
  the whole half space $\{t>0\}$ in an appropriate limit.
  In particular, we obtain that if the
  $L^{2}$ norm with weight $|x|^{-\frac12}$ of the data
  tends to 0, the regular set invades $\{t>0\}$;
  this result improves Theorem D of \cite{CKN}.
\end{abstract}
\maketitle

\section{Introduction and main results}

We consider the Cauchy problem for the Navier--Stokes
equation on $\Rpiu \times \Rtre$
\begin{equation}\label{CauchyNS}
\left \{
\begin{array}{rcl}
\partial_{t}u + (u \cdot \nabla) u  -\Delta u & = & -\nabla P  \\
\nabla \cdot u & = & 0 \\
u(x,0) & = & u_{0}(x). 
\end{array}\right.
\end{equation}
describing a viscous incompressible fluid in the
absence of external forces, where as usual
$u$ is the velocity field of the fluid and $P$ the pressure,
and the initial data satisfy the compatibility condition 
$\nabla \cdot u_{0}=0$.
We use the same notation for the norm of scalar,
vector or tensor quantities:
\begin{equation*}
  \textstyle
  \| P \|_{L^{2}}:=(\int P^{2}dx)^{\frac12},
  \qquad
  \|u\|_{L^{2}}^{2}:=\sum_{j}\|u_{j}\|_{L^{2}}^{2},
  \qquad
  \|\nabla u\|_{L^{2}}^{2}:=
  \sum_{j,k}\|\partial_{k} u_{j}\|_{L^{2}}^{2}
\end{equation*}
and we write simply $L^{2}(\mathbb{R}^{3})$
instead of $[L^{2}(\mathbb{R}^{3})]^{3}$,
or $\mathscr{S}'(\Rtre)$ instead of $[\mathscr{S}'(\Rtre)]^{3}$ 
and so on. Regularity of
the global weak solutions constructed in
\cite{Hopf, Ler} is a notorious open problem,
although many partial results exist. 

The case of small data is well understood. 
In the proofs of well posedness for small data,
the equation is regarded as a linear heat equation
perturbed by a small nonlinear term $(u\cdot \nabla)u$,
and the natural approach is a fixed point argument
around the heat propagator. More precisely,
one rewrites the problem in integral form
% \begin{equation}\label{IntegralCauchyNS}
% \left \{
% \begin{array}{rcll}
% u & = & e^{t \Delta}u_{0} - \int_{0}^{t}e^{(t-s)\Delta}\mathbb{P}\nabla \cdot (u \otimes u)(s)ds &  \mbox{in} \quad \Rpiu \times \Rtre  \\
% \nabla \cdot u & = & 0   & \mbox{in} \quad \Rpiu \times \Rtre,
%  \end{array}\right.
% \end{equation}
\begin{equation}\label{IntegralCauchyNS}
  \textstyle
  u  =  e^{t \Delta}u_{0} - 
  \int_{0}^{t}e^{(t-s)\Delta}
  \mathbb{P}\nabla \cdot (u \otimes u)(s) \ ds 
  \qquad
  \mbox{in} \quad \Rpiu \times \Rtre 
\end{equation}
where $\mathbb{P}$ is the Leray projection
\begin{equation*}
\mathbb{P}f := f - \nabla \Delta^{-1}(\nabla \cdot f),
\end{equation*}
% In this representation, the solution is the sum of a linear 
% and a bilinear term
% $$
% e^{t\Delta} u_{0}, \qquad
% B(u,u) := \int_{0}^{t}e^{(t-s)\Delta}\mathbb{P}\nabla \cdot (u \otimes u)(s)ds,
% $$ 
and then the Picard iteration scheme is defined by
\begin{equation}\label{PicSequence}
  \textstyle
  u_{1}  := e^{t\Delta}u_{0}, \qquad 
  u_{n} := e^{t\Delta}u_{0} - 
  \int_{0}^{t}e^{(t-s)\Delta}
  \mathbb{P}\nabla \cdot (u_{n-1} \otimes u_{n-1})(s) \ ds.
\end{equation}
Once the velocity is known the pressure can be recovered at each time by $P = - \Delta^{-1} \nabla \otimes \nabla (u \otimes u)$.
Small data results fit in the following abstract framework:

\begin{proposition}[\cite{Lem}]\label{PicardThm}
Let %$X \subset \mathscr{S}'(\mathbb{R}^{+}\times \mathbb{R}^{3})$
$X \subset \bigcap_{s<\infty} L^{2}_{t}L^{2}_{uloc, x}((0,s) \times \Rtre)$\footnote{The space 
$L^{2}_{uloc}$ consists of the functions that are uniformly locally square-integrable (see \cite{Lem} Definition 11.3). The operator (\ref{Def:BilinearOperator}) is well-defined on
$\bigcap_{s<\infty} L^{2}_{t}L^{2}_{uloc, x}((0,s) \times \Rtre) \times \bigcap_{s<\infty} L^{2}_{t}L^{2}_{uloc, x}((0,s) \times \Rtre)$. %and the differential and integral formulations (\ref{CauchyNS}, \ref{IntegralCauchyNS}) are equivalent. 
We refer to \cite{Lem}, Chapter 11, for more details.} 
be a Banach space such that the bilinear form
\begin{equation}\label{Def:BilinearOperator}
\textstyle
B(u,v) := \int_{0}^{t}e^{(t-s)\Delta}
\mathbb{P}\nabla \cdot (u \otimes v)(s) \ ds
\end{equation}
is bounded from $X \times X$ to $X$:
$$
\|B(u,v)\|_{X} \leq C_{X} \|u\|_{X}  \|v\|_{X}.
$$
Moreover, let $X_{0} \subset \mathscr{S}'(\Rtre)$ be 
a normed space such that
$e^{t \Delta}:X_{0}\to X$ is bounded:
$$
\|e^{t\Delta} f\|_{X} \leq A_{X_{0},X}  \|f\|_{X_{0}}.
$$
Then for every data $u_{0}$ such that
$\|u_{0}\|_{X_{0}} < 1/ 4 C_{X} A_{X_{0},X}$ the sequence
$u_{n}$ is Cauchy in $X$ and converges to 
a solution $u$ of the integral equation (\ref{IntegralCauchyNS}). 
The solution satisfies
$$
\| u \|_{X} \leq 2 A_{X_{0},X} \|u_{0}\|_{X_{0}}.
$$
\end{proposition}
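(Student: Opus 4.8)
The plan is to run the standard quadratic fixed-point argument for the map $\Phi(u) := e^{t\Delta}u_0 - B(u,u)$, using only the two structural hypotheses: boundedness of $B$ on $X\times X$ and of $e^{t\Delta}$ from $X_0$ to $X$. Write $w := e^{t\Delta}u_0$ and $a := \|w\|_X$, so that the second hypothesis gives $a \le A_{X_0,X}\|u_0\|_{X_0} < 1/(4C_X)$; this strict inequality is the quantity that drives the whole scheme, so I would isolate it at the outset.

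First I would establish a uniform bound on the iterates by induction. Since $u_1 = w$ we have $\|u_1\|_X = a \le 2a$, and if $\|u_{n-1}\|_X \le 2a$ then $\|u_n\|_X \le \|w\|_X + C_X\|u_{n-1}\|_X^2 \le a + 4C_X a^2$, which is $\le 2a$ precisely because $4 C_X a \le 1$. Hence $\|u_n\|_X \le 2a$ for every $n$, and in particular the iterates remain inside $X$, so that $B$ may legitimately be applied at each step; here I would also invoke the footnote, which guarantees that $B$ is well-defined on the ambient uniformly-local space containing $X$.

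The contraction estimate is obtained from bilinearity of $B$. Writing $u_{n+1}-u_n = -\bigl(B(u_n,u_n)-B(u_{n-1},u_{n-1})\bigr)$ and decomposing $B(u_n,u_n)-B(u_{n-1},u_{n-1}) = B(u_n-u_{n-1},u_n)+B(u_{n-1},u_n-u_{n-1})$, the boundedness of $B$ together with the uniform bound yields $\|u_{n+1}-u_n\|_X \le C_X(\|u_n\|_X+\|u_{n-1}\|_X)\|u_n-u_{n-1}\|_X \le 4C_X a\,\|u_n-u_{n-1}\|_X$. Setting $\lambda := 4C_X a<1$, the successive differences contract geometrically, so $(u_n)$ is Cauchy in the Banach space $X$ and converges to some limit $u$ satisfying $\|u\|_X \le 2a \le 2A_{X_0,X}\|u_0\|_{X_0}$, which is the claimed bound.

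Finally I would pass to the limit in the recursion $u_n = w - B(u_{n-1},u_{n-1})$. Continuity of the bounded bilinear form $B$ gives $B(u_{n-1},u_{n-1})\to B(u,u)$ in $X$, whence $u = w - B(u,u) = e^{t\Delta}u_0 - B(u,u)$, i.e.\ $u$ solves the integral equation (\ref{IntegralCauchyNS}). I do not expect any genuine obstacle in this abstract statement: the only points demanding care are the bookkeeping of the threshold $1/(4C_X A_{X_0,X})$ and of the factor $2$ in the final estimate, both forced by the quadratic recursion $a\mapsto a+4C_X a^2$, and the verification that the limit lies in the domain where $B$ acts, which is secured by completeness of $X$ and continuity of $B$. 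The real difficulty in applications lies elsewhere, namely in exhibiting a concrete pair $(X_0,X)$ for which the two boundedness hypotheses hold, but that is outside the scope of the present statement.
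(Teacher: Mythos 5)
Your proof is correct and is precisely the standard quadratic fixed-point argument that the paper invokes by citation to \cite{Lem} rather than reproducing: the uniform bound $\|u_n\|_X\le 2a$ by induction, the bilinear decomposition giving the contraction factor $4C_Xa<1$, and passage to the limit using continuity of $B$. All constants and thresholds are handled correctly, so there is nothing to add.
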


The space $X$ is usually called an 
\emph{admissible (path) space}, while $X_{0}$ is called 
an \emph{adapted space}.
Many adapted spaces $X_{0}$ have been studied:
$L^{3}$ \cite{Kato},
Morrey spaces \cite{Gig3, Tay}, Besov spaces 
\cite{Cannon, Gall, Planc} and several others. 
The largest space in which Picard iteration has 
been proved to converge is $BMO^{-1}$ \cite{Tat}.

A crucial ingredient in the theory is symmetry
invariance.
The natural symmetry of the Navier--Stokes equation is
the translation-scaling
$$
u_{0}(x) 
\mapsto \lambda u_{0} (\lambda (x-x_{0})), \qquad
\lambda \in \mathbb{R}^{+},\ \ x_{0} \in \Rtre,
$$
and indeed all the spaces $X_{0}$ mentioned above
are invariant for this transformation.
On the other hand, in results of local regularity 
a role may be played by some spaces which are scaling but not
translation invariant, like the
weighted $L^{p}$ spaces with norm
$$
\| |x|^{1-\frac3p}  u(x) \|_{L^{p}(\Rtre)}.
$$
When $p=2$ this is the weighted $L^{2}$ space with norm
$\||x|^{-\frac12}u(x)\|_{L^{2}}$, used
in the classical regularity results
of \cite{CKN}. We recall a key definition from that paper:

\begin{definition}
A point $(t_{0},x_{0}) \in \mathbb{R}^{+}
\times \mathbb{R}^{3}$ is 
\emph{regular} for a solution $u(t,x)$ of
(\ref{CauchyNS}) if $u$ is essentially bounded on a neighbourhood of
$(t_{0},x_{0})$.
It follows that $u(t,x)$ is smooth near
$(t_{0},x_{0})$ (see for instance \cite{Ser}). 
A subset of $\mathbb{R}^{+} \times \mathbb{R}^{3}$
is \emph{regular} if all its points
are regular.
\end{definition}

The following result (Theorem D in \cite{CKN})
applies to the special class of \emph{suitable weak solutions},
which are, roughly speaking, solutions with bounded energy;
see the beginning of
Section \ref{sec:prelim} for the precise definition.
We use the notation
\begin{equation*}
  \Pi_{\alpha}:=
  \left\{ (t,x)\in \mathbb{R}^{+}\times \mathbb{R}^{3}
  \ \colon\ 
  t > \frac{|x|^{2}}{\alpha} 
  \right\}
\end{equation*}
to denote the paraboloid of aperture $\alpha$ 
in the upper half space
$\mathbb{R}^{+}\times \mathbb{R}^{3}$; 
note that $\Pi_{\alpha}$ is increasing in $\alpha$.

\begin{theorem}[Caffarelli--Kohn--Nirenberg]\label{CKNSmallData}
  There exists a constant $\varepsilon_{0} > 0$ such that
  the following holds. Let
  $u$ be a suitable weak solution of Problem (\ref{CauchyNS})
  with divergence free initial data
  $u_{0} \in L^{2}(\Rtre)$. If
  \begin{equation*}
    \| |x|^{-1/2} u_{0} \|_{L^{2}(\Rtre)} ^{2}= \varepsilon < 
    \varepsilon_{0}
  \end{equation*}
  then the paraboloid
  \begin{equation*}
    \Pi_{\varepsilon_{0}-\varepsilon}
    \equiv
    \left\{ (t,x) \ : \ t > 
    \frac{|x|^{2}}{\varepsilon_{0} - \varepsilon} \right\}
  \end{equation*} 
  is a regular set.
\end{theorem}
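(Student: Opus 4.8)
The plan is to deduce regularity at each point of $\Pi_{\varepsilon_{0}-\varepsilon}$ from the local $\varepsilon$-regularity theory of \cite{CKN}, the smallness being fed in through a weighted energy inequality centred at the target point. Recall the Caffarelli--Kohn--Nirenberg criterion: there is a universal $\varepsilon_{0}>0$ (the same constant appearing in the statement) such that a point $(t_{0},x_{0})$ is regular as soon as the scale invariant quantity $\limsup_{r\to0}\frac1r\iint_{Q_{r}(t_{0},x_{0})}|\nabla u|^{2}\,dx\,dt$ stays below $\varepsilon_{0}$, where $Q_{r}(t_{0},x_{0})=\{(t,x):t_{0}-r^{2}<t<t_{0},\ |x-x_{0}|<r\}$. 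By the parabolic rescaling $u(t,x)\mapsto\sqrt{t_{0}}\,u(t_{0}t,\sqrt{t_{0}}x)$, which preserves both the class of suitable weak solutions and the norm $\||x|^{-1/2}u_{0}\|_{L^{2}}$, I may normalise $t_{0}=1$; the membership $(t_{0},x_{0})\in\Pi_{\varepsilon_{0}-\varepsilon}$ then reads simply $|x_{0}|^{2}<\varepsilon_{0}-\varepsilon$, so the target point is forced to lie within a controlled distance of the origin at unit scale.

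The engine is the generalised energy inequality for suitable weak solutions, tested against the backward heat kernel $G:=G_{(t_{0},x_{0})}$, with $G_{(t_{0},x_{0})}(t,x)=\big(4\pi(t_{0}-t)\big)^{-3/2}\exp\!\big(-\tfrac{|x-x_{0}|^{2}}{4(t_{0}-t)}\big)$, which solves the adjoint heat equation $\partial_{t}G+\Delta G=0$. Using a regularised version of $G$ (shifting the singular time, or inserting a cutoff and passing to the limit), the term $\iint|u|^{2}(\partial_{t}\phi+\Delta\phi)$ drops out and one is left with
\[
\int G(t,x)\,|u(t,x)|^{2}\,dx \;+\; 2\int_{0}^{t}\!\!\int G\,|\nabla u|^{2} \;\le\; \int G(0,x)\,|u_{0}|^{2}\,dx \;+\; \mathcal{N}(t),
\]
where $\mathcal{N}(t)=\int_{0}^{t}\!\int(|u|^{2}+2P)\,u\cdot\nabla G$ collects the drift and pressure contributions. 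The point of this weight is that it turns the regularity criterion into a finiteness statement: since $G$ is bounded below by a multiple of $(t_{0}-t)^{-3/2}$ on the core of $Q_{r}$, a dyadic-in-time decomposition gives $\frac1r\iint_{Q_{r}}|\nabla u|^{2}\lesssim r^{2}\int_{0}^{t_{0}}\!\int G\,|\nabla u|^{2}$, which tends to $0$ as $r\to0$ whenever the weighted dissipation $\int_{0}^{t_{0}}\!\int G\,|\nabla u|^{2}$ is finite. Hence everything reduces to bounding the right-hand side of the displayed inequality uniformly for $(t_{0},x_{0})\in\Pi_{\varepsilon_{0}-\varepsilon}$.

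The initial term is controlled by comparing the Gaussian weight with $|x|^{-1}$: at unit scale one checks $\sup_{x}|x|\,G_{(1,x_{0})}(0,x)\le C(1+|x_{0}|)$, so that $\int G(0,x)\,|u_{0}|^{2}\,dx\le C(1+|x_{0}|)\,\varepsilon$, and the constraint $|x_{0}|^{2}<\varepsilon_{0}-\varepsilon$ keeps this contribution comparable to $\varepsilon$; tracking the constants sharply is exactly what dictates the admissible aperture $\varepsilon_{0}-\varepsilon$. The main obstacle is the nonlinear remainder $\mathcal{N}(t)$, because suitable weak solutions are not globally small --- only the scale invariant weighted norm of the data is. Here I would establish an a priori bound on the weighted dissipation by absorbing $\mathcal{N}$ into the left-hand side: the cubic term is handled through a Hardy-type inequality adapted to the weight $G$ together with the Sobolev control $u\in L^{\infty}_{t}L^{2}\cap L^{2}_{t}\dot H^{1}$, and the pressure is split into a local Calderón--Zygmund part and a harmonic far-field part and estimated on shrinking parabolic cylinders using the absolute continuity of $\iint G\,|\nabla u|^{2}$, in the spirit of the \cite{CKN} iteration. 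The smallness of $\varepsilon$ supplies the base case of this induction, and its propagation across scales is the genuinely technical heart of the argument; once the weighted dissipation is shown finite, the $\varepsilon$-regularity criterion applies at every point of $\Pi_{\varepsilon_{0}-\varepsilon}$ and the theorem follows.
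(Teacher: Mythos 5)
The central mechanism of your proof fails at the step where the Gaussian-weighted dissipation is converted into the regularity criterion \eqref{CKNConditionSB}. The claimed bound $\frac1r\iint_{Q_r}|\nabla u|^2\lesssim r^2\int_0^{t_0}\!\int G\,|\nabla u|^2$ requires $G\gtrsim r^{-3}$ on the \emph{whole} cylinder $Q_r(t_0,x_0)$, and this is false: at a point with $|x-x_0|\sim r$ and $t_0-t=\delta\ll r^2$ one has $G\sim\delta^{-3/2}e^{-r^2/(16\delta)}$, so $\inf_{Q_r}G=0$. A dyadic decomposition in time does not repair this: on the slab $t_0-t\sim 2^{-k}r^2$ the infimum of $G$ over the spatial ball $|x-x_0|<r$ is of order $2^{3k/2}r^{-3}e^{-2^{k-1}}$, so the constants degenerate superexponentially in $k$. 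The structural problem is that $G$ has parabolic homogeneity $-3$ and is singular at the single point $(t_0,x_0)$, whereas the criterion quantity carries the normalization $1/r$ and is integrated over full cylinders; worse, the criterion actually used in \cite{CKN} and here involves the shifted cylinders $Q^{*}_{r}$, which reach times $t>t_0$ where the backward kernel vanishes identically. What makes the argument work --- both in Theorem D of \cite{CKN} and in the proof of Theorem \ref{the:tent} in Section \ref{sec:proof} --- is a weight of homogeneity $-1$ that is singular along the \emph{entire set of points being tested}: $\sigma_\nu(y)=(\nu+|y|^2)^{-1/2}$, essentially $|x|^{-1}$, respectively $|x-\xi\tau|^{-1}$ after the Galilean change of variables. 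Then $|x-\xi\tau|^{-1}\ge(2r)^{-1}$ on $Q^{*}_{r}(s,\xi s)$, the factor $1/r$ is matched exactly, and mere finiteness of the weighted dissipation forces the $\limsup$ in \eqref{CKNConditionSB} to vanish by absolute continuity of the integral.

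The second gap is the absorption of the cubic and pressure terms, which you defer as ``the genuinely technical heart''. This is not a deferrable technicality: with the degree $-1$ weight the Gronwall loop closes precisely because of the interpolation inequality \eqref{CKNInequalityCKNVersion} and the weighted Calder\'on--Zygmund bound \eqref{SteinIneq} --- every error term takes the form $\dot B_\nu a_\nu^{1/2}$ or $\|w\|_{L^4}a_\nu^{1/8}\dot B_\nu^{7/8}$ and is absorbed into the dissipation, exactly as in \eqref{FirstIn}--\eqref{SevIn} of this paper and in Lemma 8.2 of \cite{CKN}. For the backward heat kernel the drift term $\nabla G=-\frac{x-x_0}{2(t_0-t)}\,G$ carries the factor $|x-x_0|/(t_0-t)$, which is unbounded even after parabolic scaling, and there is no analogue of \eqref{CKNInequalityCKNVersion} allowing you to absorb $\iint(|u|^3+|P|\,|u|)|\nabla G|$ into $\iint G|\nabla u|^2$ by a Gronwall argument alone; controlling such local quantities across scales is essentially the content of the proof of the $\varepsilon$-regularity criterion itself, so invoking it here is circular. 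As it stands, your proposal reduces the theorem to two unproved claims, the first of which is false as stated.
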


The theorem states that if the weighted $L^{2}$ norm of the
data is sufficiently small, then the solution
is smooth on a certain paraboloid with vertex at the origin.
If the size of the data tends to 0, the regular set increases
and invades a limit paraboloid
$\Pi_{\varepsilon_{0}}$, which is strictly contained in
the half space $t>0$.

It is reasonable to expect that the regular set actually
invades the whole upper half space $t>0$ when the size
of the data tends to 0. This is indeed a special case of
our main result, see Theorem \ref{the:tent} below and
in particular Corollary \ref{cor:CKN}.

However our main goal is a more general investigation
of the influence on the regular set of
additional angular integrability of the data.
We measure angular regularity
using the following mixed norms:
\begin{equation}\label{MixedNorms}
\begin{array}{lcl}
  \|f\|_{L^{p}_{|x|}L^{\p}_{\theta}}& := &
  \left(
    \int_{0}^{+\infty}
    \|f(\rho\ \cdot\ )\|^{p}_{L^{\p}(\mathbb{S}^{2})}
    \rho^{2}d \rho
  \right)^{\frac1p}, \\
  \|f\|_{L^{\infty}_{|x|}L^{\p}_{\theta}}& := &
  \sup_{\rho>0}\|f(\rho\ \cdot\ )\|_{L^{\p}(\mathbb{S}^{2})}.
\end{array}
\end{equation}
The idea of separating radial and angular regularity
is not new; it proved useful especially in the context of
Strichartz estimates and dispersive equations
(see \cite{ChoOzawa09-a},
\cite{DanconaCacciafesta11-a}, 
\cite{FangWang08-a},
\cite{MachiharaNakamuraNakanishi05-a},
\cite{Rogers}
\cite{Sterbenz05-a}).
The $L^{p}_{|x|}L^{\p}_{\theta}$ scale 
includes the usual $L^{p}$ norms when $\p=p$:
\begin{equation*}
  \|u\|_{L^{p}_{|x|}L^{p}_{\theta}} =
  \|u\|_{L^{p}(\mathbb{R}^{3})}.
\end{equation*}
Note also that for
radial functions the value of $\p$ is irrelevant, 
in the sense that
\begin{equation*}
  \text{$u$ radial}\quad\implies\quad 
  \|u\|_{L^{p}_{|x|}L^{\p}_{\theta}} 
  \simeq \| u \|_{L^{p}(\mathbb{R}^{3})}
  \quad \forall p,\p\in[1,\infty]
\end{equation*}
while for generic functions we 
have only\footnote{As usual we write $A \lesssim B$ if there is a constant $C$ 
independent of $A,B$ such that
$A \leq C B$ and $A \simeq B$ if
$A \lesssim B$ and $B \lesssim A$.}
\begin{equation*}
  \|u\|_{L^{p}_{|x|}L^{\p}_{\theta}} 
  \lesssim \|u\|_{L^{p}_{|x|}L^{\p_{1}}_{\theta}}
  \quad\text{if}\quad \p\le\p_{1}.
\end{equation*}
With respect to scaling, the mixed radial-angular 
norm $L^{p}_{|x|}L^{\p}_{\theta}$ behaves like $L^{p}$
and in particular we have for all 
$\widetilde{p}\in[1,\infty]$ and all $\lambda>0$
\begin{equation*}
  \| |x|^{\alpha} \lambda u_{0} 
  (\lambda x)\|_{L^{p}_{|x|}L^{\p}_{\theta}} =
  \| |x|^{\alpha} u_{0} (x)\|_{L^{p}_{|x|}L^{\p}_{\theta}}, 
  \qquad
  \mbox{provided} 
  \qquad 
  \alpha = 1- \frac{3}{p}. 
\end{equation*}
As a first application, we show that for
initial data with small 
$\||x|^{\alpha}u_{0}\|_{L^{p}_{|x|}L^{\p}_{\theta}}$ norm
and
$
\p \geq 2p/(p-1),
$
the problem has a global smooth solution. 
As we prove in Section \ref{sec:prelim},
this norm controls the
$B^{-1 + 3/q}_{q, \infty}$ norm (for $q$ large enough), 
and this space is embedded in $BMO^{-1}$,
thus the existence part in Theorem \ref{WeighKato}
could be deduced 
from the more general results in \cite{Cannon,  Tat, Planc}.
However, the quantitative estimate (\ref{NotInfty})
is new for such initial data,
% (to our knowledge) 
and it will be a crucial tool for
the proof of our main Theorem \ref{the:tent}.
%Thus, writing
%$$
%  \Lambda(\alpha,p,\p) := \alpha + \frac{2}{p} - \frac{2}{\p},
%$$

\begin{theorem}\label{WeighKato}
  Let 
  $1 < p < 5$,  $\p \geq 2p/(p-1)$, 
  $\alpha=1-3/p$
  and let $u_{0} \in L^{p}_{|x|^{\alpha p}d|x|}L^{\p}_{\theta}$ 
  be divergence free. Moreover, let
  \begin{equation}\label{AssumptOnQWK}
  \begin{array}{lcl}
  \frac{2p}{p-1} \leq q < \infty & \mbox{if} & 1 < p \leq 2
  \\
  \frac{2p}{p-1} \leq q < \frac{3p}{p-2} & \mbox{if} & 2 \leq p \leq 3 
  \\   
  p < q < \frac{3p}{p-2} & \mbox{if} & 3 \leq p < 5
  \end{array}
  \end{equation}
  and
  \begin{equation}\label{ScalingAssumptWK}
  %2 < r < \infty, \quad SEGUE DALLO SCALING E DALLE HP SU Q 
  \frac{2}{r} + \frac{3}{q} = 1.
  \end{equation}    
  Then there exists an
  $\bar{\varepsilon}=\bar{\varepsilon}(p,\p,q)>0$
  such that, if
  \begin{equation}\label{SmallAssInGlobalThm}
    \||x|^{\alpha}u_{0}\|_{L^{p}_{|x|}L^{\p}_{\theta}}
    < 
    \bar{\varepsilon},
  \end{equation}
  Problem (\ref{IntegralCauchyNS}) has a unique 
  global smooth solution $u$
  satisfying\footnote{Here and in the following we use the notation
$
  \| f \|_{XYZ} := \|\|\| f \|_{Z}\|_{Y}\|_{X}
$ 
for nested norms. When we write $\| u \|_{L^{r}_{t}L^{q}_{x}}$ we mean that the inegration is extended to all the times $t >0$.}
  \begin{equation}\label{NotInfty} 
    \| u \|_{L^{r}_{t}L^{q}_{x}} 
    %+
    %\sup_{t>0} t^{1/2} \| u \|_{L^{\infty}_{x}}(t)    
    \le
    \bar{C}
    \||x|^{\alpha} u_{0} \|_{L^{p}_{|x|}L^{\p}_{\theta}}.
  \end{equation}
  %Furthermore for all 
  %$q\ge p$, $\q\ge\p$, $r>p$
  %satisfying the constraints
  %\begin{equation}\label{eq:condDL(Heat)2InThm}
  %  \textstyle
  %  \Lambda(\alpha, p, \p) \geq \Lambda (\beta,q,\q) \geq 0,
  %  \qquad
  %  \frac2r+\frac3q-\frac2p\ge0
  %\end{equation}
  %there exists a
  for some constant $\bar{C}=\bar{C}(p,\widetilde{p},q)$
  independent of $u_{0}$.
  %OLD NOTINFTY  
  %\begin{equation}\label{OLDNotInfty}
  %  \||x|^{\beta} u\|_{L^{r}_{t}L^{q}_{|x|}L^{\q}_{\theta}} 
  %  \le
  %  \bar{C}
  %  \||x|^{\alpha} u_{0} \|_{L^{p}_{|x|}L^{\p}_{\theta}},
  %  \qquad
  %  \beta=1-\frac2r-\frac3q.
  %\end{equation}
  %[[HOW TO PROVE REGULARITY WHEN $\beta\neq0$???]]
\end{theorem}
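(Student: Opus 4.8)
The plan is to place the problem in the abstract Picard framework of Proposition \ref{PicardThm}, taking as path space the Strichartz-type space $X=\Lrq$, with $(r,q)$ as in \eqref{AssumptOnQWK}--\eqref{ScalingAssumptWK}, and as adapted space the weighted mixed-norm space $X_{0}=\Lpptildealpha$. Since $2/r+3/q=1$ both spaces are scaling critical, and the inclusion $\Lrq\subset\bigcap_{s<\infty}L^{2}_{t}L^{2}_{uloc,x}((0,s)\times\Rtre)$ holds because $r>2$ and $q>3$, so $X$ is admissible in the sense of the Proposition. Once we verify its two hypotheses — boundedness of the bilinear form $B$ on $X\times X$ and boundedness of $e^{t\Delta}\colon X_{0}\to X$ — the existence of a unique small global solution together with the quantitative bound \eqref{NotInfty} follows at once, with $\bar\varepsilon=1/4C_{X}A_{X_{0},X}$ and $\bar C=2A_{X_{0},X}$.

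For the bilinear estimate I would follow the classical Fujita--Kato argument. Combining the boundedness of $\mathbb{P}$ on $L^{q}$ (valid for $1<q<\infty$) with the smoothing estimate $\|e^{\tau\Delta}\nabla\cdot F\|_{L^{q}}\lesssim\tau^{-1/2-3/(2q)}\|F\|_{L^{q/2}}$ and H\"older in $x$, one reduces $\|B(u,v)(t)\|_{L^{q}}$ to $\int_{0}^{t}(t-s)^{-1/2-3/(2q)}\|u(s)\|_{L^{q}}\|v(s)\|_{L^{q}}\,ds$, after which a Hardy--Littlewood--Sobolev inequality in the time variable closes the estimate in $\Lrq$. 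One checks that the scaling identity $2/r+3/q=1$ makes the Hardy--Littlewood--Sobolev balance hold exactly, while the lower bounds in \eqref{AssumptOnQWK} (in particular $q>3$) guarantee that the temporal singularity is integrable and that $\mathbb{P}$ acts on $L^{q/2}$; thus this step, although technical, is essentially known.

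The heart of the matter is the linear estimate $\|e^{t\Delta}u_{0}\|_{\Lrq}\lesssim\||x|^{\alpha}u_{0}\|_{\Lpptilde}$, where the angular integrability is used and which yields the new quantitative bound. By the heat-semigroup characterization of homogeneous Besov spaces, together with $-1+3/q=-2/r$, one has the exact equivalence $\|e^{t\Delta}u_{0}\|_{\Lrq}\simeq\|u_{0}\|_{\dot B^{-1+3/q}_{q,r}}$, so the estimate is precisely the static embedding
\[
  \|u_{0}\|_{\dot B^{-1+3/q}_{q,r}}\ \lesssim\ \||x|^{\alpha}u_{0}\|_{\Lpptilde}.
\]
I would prove this by splitting $u_{0}$ into the dyadic annuli $|x|\sim 2^{k}$ and establishing, on each annulus, a fixed-time radial--angular smoothing estimate for $e^{t\Delta}$ that trades angular for radial integrability, passing from the input angular exponent $\p$ to the output exponent $q$; this is exactly the step that forces $\p\ge 2p/(p-1)$, more angular integrability meaning better smoothing, while the upper bounds in \eqref{AssumptOnQWK} keep $q$ compatible with the data space. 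Since each annulus is effectively active only near the time scale $t\sim 2^{2k}$, the single-scale bounds carry an integrable temporal profile, and summing over $k$ — here the finiteness of the radial exponent $p$ supplies the needed $\ell^{p}$ summation and the separation of time scales replaces orthogonality — produces the \emph{strong} third-index $r$ norm, rather than the weak $L^{r,\infty}_{t}$ bound that mere control of $\dot B^{-1+3/q}_{q,\infty}$ would give. I expect this embedding, and in particular the sharp interplay between the angular gain and the radial summation, to be the main obstacle.

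Finally, with both hypotheses verified, Proposition \ref{PicardThm} delivers the global mild solution $u$ with $\|u\|_{\Lrq}\le\bar C\,\||x|^{\alpha}u_{0}\|_{\Lpptilde}$, which is \eqref{NotInfty}. Since $2/r+3/q=1$ with $q>3$ and $r<\infty$ throughout the admissible range, $u$ lies in the scaling-critical Serrin class, so the classical regularity theory for such mild solutions upgrades it to a smooth solution for $t>0$; uniqueness in this class — and in particular within the contraction ball of the Picard scheme — completes the proof.
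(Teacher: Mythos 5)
Your overall architecture coincides with the paper's: both proofs run the abstract Picard scheme of Proposition \ref{PicardThm} with adapted space $L^{p}_{|x|^{\alpha p}d|x|}L^{\p}_{\theta}$, both dispose of the bilinear estimate by the classical Fabes--Jones--Rivi\`ere argument (this is exactly Lemma \ref{Lemma:DuhamTermBound}; your H\"older/Hardy--Littlewood--Sobolev computation with the balance $\frac12+\frac{3}{2q}=1-\frac1r$ is correct), and both reduce the substance of the theorem to the linear estimate $\|e^{t\Delta}u_{0}\|_{L^{r}_{t}L^{q}_{x}}\lesssim \||x|^{\alpha}u_{0}\|_{L^{p}_{|x|}L^{\p}_{\theta}}$. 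Your deviations elsewhere are harmless: working in the bare path space $L^{r}_{t}L^{q}_{x}$ (the paper enlarges its path space by the component $\sup_{t>0}t^{1/2}\|u\|_{L^{\infty}_{x}}$ and deduces smoothness from boundedness on $(\delta,\infty)\times\Rtre$) is acceptable, since a mild solution in the critical Serrin class with $q>3$, $r<\infty$ is indeed smooth; and the caloric identification $\|e^{t\Delta}u_{0}\|_{L^{r}_{t}L^{q}_{x}}\simeq\|u_{0}\|_{\dot B^{-1+3/q}_{q,r}}$ (using $-1+3/q=-2/r$) is correct, as is your observation that control of $\dot B^{-1+3/q}_{q,\infty}$ alone would only give a weak $L^{r,\infty}_{t}$ bound.

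The genuine gap is that this linear estimate --- the only place where the angular condition $\p\ge 2p/(p-1)$ and the restrictions \eqref{AssumptOnQWK} actually do any work --- is not proved but only gestured at. Your dyadic plan rests on two unproven claims: a ``fixed-time radial--angular smoothing estimate'' on each annulus $|x|\sim 2^{k}$, which is never formulated, and the assertion that ``separation of time scales replaces orthogonality'' so that the single-scale bounds sum against the $\ell^{p}$ control supplied by the data norm. The second claim is the hard one: for $t\gtrsim 2^{2k}$ the evolution $e^{t\Delta}u_{0,k}$ of the $k$-th piece has spread over all of space, so different pieces overlap heavily; the data norm controls the pieces only in $\ell^{p}$ while the target norm is $L^{r}_{t}L^{q}_{x}$ with $q,r>p$; and the estimates must be uniform over data that concentrate in the angular variable inside each annulus, which is exactly where $\p\ge 2p/(p-1)$ has to enter. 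The paper fills precisely this hole by a different, complete mechanism, namely Lemma \ref{IDecayCor} (Proposition 3.4 in \cite{Ren}): the pointwise-in-time decay estimates \eqref{PHeatDer} of Proposition \ref{PDecayCor}, available for a whole family of data exponents along the scaling line, are upgraded from the weak bound $\sup_{t>0}t^{1/r}\|e^{t\Delta}u_{0}\|_{L^{q}_{x}}<\infty$ to the strong $L^{r}_{t}$ bound by Marcinkiewicz real interpolation performed in the data space; the condition $p<r$, which under the scaling relation \eqref{ScalingAssumptWK} is equivalent to \eqref{Cond:EquivTop<r} (and, together with $p,\p\le q$, to \eqref{AssumptOnQWK}), is exactly what legitimizes that interpolation. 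To complete your argument you should either carry out the annular decomposition in full detail --- which I expect to be at least as involved as the interpolation route --- or replace your third paragraph by the decay-plus-Marcinkiewicz argument, i.e.\ by Lemma \ref{IDecayCor}.
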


In the following we shall need only the special case
corresponding to the choice
\begin{equation*}
  p=2,
  \qquad
  \widetilde{p}=4,
  \qquad
  q=4.
\end{equation*}
Thus, using the notations
\begin{equation}\label{eq:ep1C1}
  \varepsilon_{1} := \bar{\varepsilon}(2,4,4),
  \qquad
  C_{1} := \bar{C}(2,4,4),
\end{equation}
we see in particular that for all divergence free initial data 
with
\begin{equation}\label{DefEpsilon4}
  \||x|^{-1/2}  u_{0}\|_{L^{2}_{|x|}L^{4}_{\theta}} 
  <\varepsilon_{1}
\end{equation}
there exists a unique 
global smooth solution $u(t,x)$, which satisfies
the estimate
\begin{equation}\label{BoundOnWPre}
  \| u \|_{L^{8}_{t}L^{4}_{x}} \leq  C_{1} 
  \||x|^{-1/2} u_{0} \|_{L^{2}_{|x|}L^{4}_{\theta}}.
\end{equation}

To prepare for our last result, we introduce the notations
\begin{equation*}
  \textstyle
  \theta _{1}(\widetilde{p})
  :=(\frac{2 \p - 4}{4 - \p} )^{1 -  \p / 4},
  \qquad
  \theta_{2}(\widetilde{p})
  := (\frac{2 \p - 4}{4 - \p} )^{1 -  \p / 2},
  \qquad
  \widetilde{p}\in(2,4).
\end{equation*}
It is easy to check that $\theta_{1},\theta_{2}\in[0,1]$
and actually
\begin{equation}\label{RoughDefinition1}
  \lim_{\widetilde{p} \rightarrow 2^{+}} \theta_{1}=0, 
  \qquad \lim_{\widetilde{p} \rightarrow 4^{-}} \theta_{1}=1,
\end{equation}
\begin{equation}\label{RoughDefinition2}
  \lim_{\widetilde{p} \rightarrow 2^{+}} \theta_{2}=1, 
  \qquad \lim_{\widetilde{p} \rightarrow 4^{-}} \theta_{2} = 0.
\end{equation}
Thus we may set $\theta_{1}(2)=0$, $\theta_{2}(2)=1$.
We also define the norm
\begin{equation}\label{eq:interpnorm}
  [u_{0}]_{\widetilde{p}}:=
  \||x|^{-\frac{2}{\widetilde{p}}}u_{0}\|^{\frac{\widetilde{p}}{2}-1}
  _{L^{\widetilde{p}/2}_{|x|}L^{\widetilde{p}}_{\theta}}
  \||x|^{-\frac{1}{\widetilde{p}}}u_{0}\|^{2-\frac{\widetilde{p}}{2}}
  _{L^{\widetilde{p}}_{x}}.
\end{equation}
Note the following facts:
\begin{enumerate}
  \item It is easy to construct initial data such that
  $[u_{0}]_{\widetilde{p}}$ is arbitrarily small while 
  $\| u_{0} \|_{BMO^{-1}}$ is arbitrarily large.
  Indeed, fix a test function 
  $\phi\in C^{\infty}_{c}(\mathbb{R}^{3})$ and
  denote with $\phi_{K}(x):=\phi(x-K \xi)$
  its translate in the direction $\xi$ for some $|\xi|=1$
  and $K>1$; we have obviously
  \begin{equation*}
    \||x|^{-\frac{1}{\widetilde{p}}}\phi_{K}\|
    % ^{2-\frac{\widetilde{p}}{2}}
    _{L^{\widetilde{p}}_{x}}
    \simeq
    K^{-\frac{1}{\widetilde{p}}}
  \end{equation*}
  since the $L^{\widetilde{p}}_{x}$ norm is translation 
  invariant. On the other hand, if the support of $\phi$
  is contained in a sphere $B(0,R)$, we have
  \begin{equation*}
    \textstyle
    \||x|^{-\frac{2}{\widetilde{p}}}\phi_{K}\|^{\widetilde{p}/2}
         _{L^{\widetilde{p}/2}_{|x|}L^{\widetilde{p}}_{\theta}}
    =
    \int_{0}^{+\infty}(\int_{\mathbb{S}^{2}}
    |\phi(\theta \rho-K \xi)|^{\widetilde{p}}dS_{\theta})^{\frac12}
    \rho d \rho
    \lesssim
    \int_{K-R}^{K+R}K^{-1}
    \rho d \rho
    \simeq 1
  \end{equation*}
  and we obtain
  \begin{equation*}
    [\phi_{K}]_{\widetilde{p}}
    \lesssim
    (1)^{\frac{\widetilde{p}}{2}-1}
    (K^{-\frac{1}{\widetilde{p}}})^{2-\frac{\widetilde{p}}{2}}
    =K^{\frac12-\frac{2}{\widetilde{p}}}.
  \end{equation*}
  Thus, by the translation invariance of $BMO^{-1}$,
  we conclude that if $\widetilde{p}\in[2,4)$
  \begin{equation}\label{eq:normto0}
    [\phi_{K}]_{\widetilde{p}}\to 0
    \ \ \ \text{while}\ \ \ 
    \|\phi_{K}\|_{BMO^{-1}}=const
    \ \ \ \text{as}\ \ \ 
    K\to \infty.
  \end{equation}

  \item In the limit cases $\widetilde{p}=2$ and 
  $\widetilde{p}=4$ we have simply
  \begin{equation}\label{Consistent1}
    [u_{0}]_{2}  = 
    \| |x|^{-1/2} u_{0}\|_{L^{2}_{x}},
    \qquad
    [u_{0}]_{4}  =
     \| |x|^{-1/2} u_{0}\|_{L^{2}_{|x|}L^{4}_{\theta}} 
  \end{equation}
  and actually the $[\ \cdot\ ]_{\widetilde{p}}$ norm
  arises as an interpolation norm between the two cases
  (see \eqref{Omega1}, \eqref{Gamma1} and \eqref{eq:decomp}
  below).
\end{enumerate}

We can now state our main result, which interpolates
between Theorems \ref{CKNSmallData} and \ref{WeighKato}:

\begin{theorem}[]\label{the:tent}
  There exists a constant $\delta>0$ such that the
  following holds. Let $u$ be a suitable weak solution
  of Problem \eqref{CauchyNS} with divergence free initial data
  $u_{0}\in L^{2}(\mathbb{R}^{3})$,
  and let $\widetilde{p}\in[2,4)$ and $M>1$.

  If the norm $[u_{0}]_{\widetilde{p}}$ of the initial data
  satisfies
  \begin{equation}\label{eq:assdata}
    \theta_{1}\cdot [u_{0}]_{\widetilde{p}}\le\delta,
    \qquad
    \theta_{2} \cdot [u_{0}]_{\widetilde{p}}\le\delta e^{-4M^{2}}
  \end{equation}
  then the paraboloid
  \begin{equation}\label{eq:parabreg}
    \Pi_{M \delta}:=
    \left\{
    (t,x)\in \mathbb{R}^{+}\times \mathbb{R}^{3}
    \ \colon\ 
    t>\frac{|x|^{2}}{M\delta}
    \right\}
  \end{equation}
  is a regular set for $u(t,x)$.
\end{theorem}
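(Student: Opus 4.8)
The plan is to establish the regularity of $\Pi_{M\delta}$ by comparing the given suitable weak solution $u$ with a \emph{globally} smooth small-data solution furnished by Theorem~\ref{WeighKato}, and then to verify a Caffarelli--Kohn--Nirenberg local $\varepsilon$-regularity criterion for the residual. Both the weighted norms entering $[u_0]_{\widetilde p}$ and the family $\Pi_\alpha$ are invariant under the parabolic scaling $u_0(x)\mapsto\lambda u_0(\lambda x)$, and the weight $|x|^{-1/2}$ is rotation invariant; hence, after scaling time to $1$ and rotating, it suffices to prove regularity at a single reference point $(1,\bar x)$ with $|\bar x|^2=M\delta$ on the boundary paraboloid, all other points of $\Pi_{M\delta}$ being either equivalent by symmetry or strictly interior (hence no worse).

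The heart of the argument is a splitting $u_0=a+b$ into divergence-free pieces that realises $[u_0]_{\widetilde p}$ as an interpolation norm between the endpoints \eqref{Consistent1}; concretely this is the $K$-functional decomposition underlying \eqref{eq:interpnorm}, with a threshold parameter to be optimised at the end. The piece $a$ is chosen so that $\||x|^{-1/2}a\|_{L^2_{|x|}L^4_\theta}<\varepsilon_1$, so that Theorem~\ref{WeighKato} with $(p,\widetilde p,q)=(2,4,4)$ produces a global smooth solution $v$ obeying the quantitative estimate \eqref{BoundOnWPre} in $L^8_tL^4_x$; this is the mechanism, weighted by $\theta_1$ via \eqref{RoughDefinition1}, that lets the regular set spread across the whole half space as the angular integrability improves. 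The complementary piece $b$ carries the remaining mass, measured in the radial endpoint by $\||x|^{-1/2}b\|_{L^2}$ and weighted by $\theta_2$ via \eqref{RoughDefinition2}. Writing $u=v+w$, the residual solves the perturbed system $\partial_t w-\Delta w+(w\cdot\nabla)w+(v\cdot\nabla)w+(w\cdot\nabla)v=-\nabla Q$ with $\nabla\cdot w=0$ and $w(0)=b$: a Navier--Stokes equation with a smooth, globally controlled drift $v$.

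It then remains to show that $w$ is regular on $\Pi_{M\delta}$, which by the smoothness of $v$ gives regularity of $u=v+w$. For this I would write a localised energy inequality for the perturbed system, treating the drift terms $(v\cdot\nabla)w$ and $(w\cdot\nabla)v$ perturbatively via the mixed radial-angular bounds on $v$ inherited from \eqref{NotInfty}, and then invoke the CKN $\varepsilon$-regularity theorem, which reduces regularity at $(1,\bar x)$ to the smallness of a scale-invariant local energy in a small parabolic cylinder inside $\Pi_{M\delta}$. The decisive quantitative input is that this local energy is driven by the heat evolution $e^{t\Delta}b$ of the radial remainder, whose Gaussian kernel forces the influence of the origin-concentrated mass measured by $\||x|^{-1/2}b\|_{L^2}$ to decay exponentially at a point sitting at aperture $M\delta$; summed over the dyadic scales that build the weighted $L^2$ norm and combined with the normalisations above, this is what I expect to yield the exponential gain recorded as $e^{-4M^2}$ in \eqref{eq:assdata}. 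Balancing the threshold in $u_0=a+b$ against this gain then produces exactly the two conditions \eqref{eq:assdata}, with the endpoint limits of $\theta_1,\theta_2$ recovering Theorem~\ref{WeighKato} as $\widetilde p\to4$ and improving Theorem~\ref{CKNSmallData} as $\widetilde p\to2$.

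The main obstacle I anticipate is twofold. First, closing the energy estimate for $w$ in the presence of the drift $v$: one must verify that the cross terms do not spoil the local smallness, which requires carefully matching the integrability of $v$ coming from \eqref{BoundOnWPre} with the energy class of $w$ and with the weight $|x|^{-1/2}$. Second, and more delicate, is tracking the \emph{explicit} constant through the localisation so as to capture the sharp exponential $e^{-4M^2}$ rather than a weaker decay: this is precisely the step that converts the bounded limiting aperture $\varepsilon_0$ of Theorem~\ref{CKNSmallData} into an aperture $M\delta$ that becomes unbounded as $M\to\infty$, and it is where the centred weight, the Gaussian tails of the heat semigroup, and the parabolic geometry of $\Pi_{M\delta}$ must be combined with quantitative care.
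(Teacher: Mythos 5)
Your skeleton coincides with the paper's: you truncate the data by height (the $K$-functional realisation of $[u_0]_{\widetilde{p}}$, projected by $\mathbb{P}$ to stay divergence free), run Theorem \ref{WeighKato} with $(p,\widetilde{p},q)=(2,4,4)$ on the angularly integrable piece to obtain a global smooth flow with the $L^{8}_{t}L^{4}_{x}$ bound, regard the remainder as a suitable weak solution of a drift-perturbed Navier--Stokes system, and aim at the CKN $\varepsilon$-regularity criterion through a localised energy inequality. The genuine gap is in the step you yourself single out as decisive: the claim that the exponential gain $e^{-4M^{2}}$ comes from Gaussian tails of $e^{t\Delta}b$ acting on the origin-concentrated mass. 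This cannot work. First, the residual is not a linear heat evolution: it is a nonlinear suitable weak solution, and its local dissipation at points far from the origin is exactly the quantity one cannot control a priori; asserting that it is ``driven by'' $e^{t\Delta}b$ assumes the conclusion. Second, even as a heuristic the exponent is wrong: at a boundary point $(t_{0},x_{0})$ of $\Pi_{M\delta}$ one has $|x_{0}|^{2}/(4t_{0})=M\delta/4$, so Gaussian decay yields at best a factor $e^{-M\delta/4}$, which is scale invariant and proportional to the \emph{aperture}; for a fixed small universal $\delta$ this is of order $e^{-cM}$, never $e^{-cM^{2}}$, and no summation over dyadic scales changes it, since the ratio $|x_{0}|^{2}/t_{0}$ is preserved by the parabolic scaling.

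What the paper does instead, and what is missing from your plan, is a stopping-time-plus-Gronwall mechanism in a moving frame. Regularity is proved for whole segments $L(T,\xi)=\{(s,\xi s)\}$ rather than for a single rescaled point: the static weight $|x|^{-1/2}$ is centred at the origin, while the CKN criterion at $(s,\xi s)$ needs a weight centred along the segment, and the Galilean change of variables $(t,y)=(t,x-\xi t)$ reconciles the two; the aperture then enters through the drift term $|\xi|\int\sigma_{\nu}^{2}|v_{\xi}|^{2}$ in the energy inequality and the final restriction $|\xi|^{2}T<M/(20Z)$ of \eqref{eq:condxi2}, not through heat-kernel decay. The weighted energy inequality for the residual contains the term $Z\dot{B}_{\nu}a_{\nu}$, a product of two unknowns; plain Gronwall cannot close it because the total weighted dissipation is bounded only by the global energy, which is not small. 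The paper's two devices are: (i) the stopping time $\bar{s}$ of \eqref{DefSetOfBarS}--\eqref{DefOfBarS}, which by construction caps the dissipation, $B(\bar{s})\le 2M^{2}$ by \eqref{BasicProperties}, so that Gronwall on $[0,\bar{s}]$ yields $a(\bar{s})\le Ze^{4M^{2}}(\theta_{2}\epsilon)^{2}$ as in \eqref{BoundBySmallnessBis} --- this Gronwall factor, not any heat-kernel tail, is the origin of $e^{4M^{2}}$ and is what gets matched against the hypothesis $\theta_{2}[u_{0}]_{\widetilde{p}}\le\delta e^{-4M^{2}}$; and (ii) past $\bar{s}$, the exponential weight $\psi_{\nu}(t)=e^{-kB_{\bar{s},\nu}(t)}$ inserted into the test function, which turns the otherwise uncontrollable term $k\dot{B}_{\bar{s},\nu}a_{\nu}$ into an exact derivative and closes the estimate \eqref{eq:Int4}. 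Without these two devices your localised energy estimate for the residual does not close, and the pair of conditions \eqref{eq:assdata} cannot be produced.
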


The result can be interpreted as follows. Since
$\theta_{2}(\widetilde{p})\to0$ as $\widetilde{p}\to4$,
we can choose $\widetilde{p}=\widetilde{p}_{M}$ as a function
of $M$ in such a way that
\begin{equation*}
  e^{4M^{2}}\cdot\theta_{2}(\widetilde{p}_{M})\to0
  \ \ \text{as}\ \ 
  M \to +\infty.
\end{equation*}
Of course we have $\widetilde{p}_{M}\to4^{-}$ as $M \to +\infty$.
Then from the theorem it follows that, for all sufficiently
large $M$,
\begin{equation*}
  [u_{0}]_{\widetilde{p}_{M}}\le\delta
  \quad\implies\quad
  \Pi_{M \delta}
  \ \text{is a regular set for $u$}.
\end{equation*}
In other words, if we take $M\to+\infty$ and
the norm $[u_{0}]_{\widetilde{p}_{M}}$ is less than $\delta$,
then the regular set invades the whole half space $t>0$.
Note that, as remarked above, the $[u_{0}]_{\widetilde{p}_{M}}$
norm can be small even if the $BMO^{-1}$ norm of $u_{0}$ is large.

Even in the special case $\widetilde{p}=2$,
which is covered by Theorem D of \cite{CKN}, the result gives
some new information on the regular set. Indeed, for
$\widetilde{p}=2$ we have 
$\theta_{1}=0$, $\theta_{2}=1$, and we obtain:

\begin{corollary}\label{cor:CKN}
  There exists a constant $\delta>0$ such that for any
  suitable weak solution $u$ with divergence free
  initial data $u_{0} \in L^{2}(\Rtre)$, and for every $M>1$, if the
  initial data satisfy
  \begin{equation*}
    \| |x|^{-1/2} u_{0}\|_{L^{2}_{x}}\le \delta e^{-4M^{2}}
  \end{equation*}
  then the paraboloid $\Pi_{M \delta}$ is a regular set
  for $u$.
\end{corollary}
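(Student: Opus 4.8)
The plan is to deduce Corollary~\ref{cor:CKN} as a direct specialization of Theorem~\ref{the:tent}, exploiting the degeneracy of the interpolation parameters at the endpoint $\widetilde{p}=2$. First I would recall from \eqref{RoughDefinition1}--\eqref{RoughDefinition2} and the stipulation immediately following them that at $\widetilde{p}=2$ we have the boundary values $\theta_{1}(2)=0$ and $\theta_{2}(2)=1$. The key simplification is that with $\theta_{1}=0$, the first condition in \eqref{eq:assdata}, namely $\theta_{1}\cdot[u_{0}]_{\widetilde{p}}\le\delta$, becomes $0\le\delta$, which holds automatically for any $\delta>0$ and any data $u_{0}$. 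Thus at $\widetilde{p}=2$ the first hypothesis of Theorem~\ref{the:tent} is vacuous and imposes no restriction whatsoever.

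Next I would address the second condition. With $\theta_{2}=1$, the inequality $\theta_{2}\cdot[u_{0}]_{\widetilde{p}}\le\delta e^{-4M^{2}}$ reduces to $[u_{0}]_{2}\le\delta e^{-4M^{2}}$. Here I would invoke the first identity in \eqref{Consistent1}, which identifies the interpolation norm at the endpoint $\widetilde{p}=2$ with the classical Caffarelli--Kohn--Nirenberg weighted norm:
\begin{equation*}
  [u_{0}]_{2}=\||x|^{-1/2}u_{0}\|_{L^{2}_{x}}.
\end{equation*}
Substituting this identity, the second hypothesis of Theorem~\ref{the:tent} becomes exactly the assumption of the corollary, $\||x|^{-1/2}u_{0}\|_{L^{2}_{x}}\le\delta e^{-4M^{2}}$.

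Having verified both hypotheses of Theorem~\ref{the:tent} at $\widetilde{p}=2$, I would conclude by applying that theorem directly. Its conclusion is that the paraboloid $\Pi_{M\delta}$ defined in \eqref{eq:parabreg} is a regular set for $u$, which is precisely the assertion of the corollary. The constant $\delta>0$ is the same one furnished by Theorem~\ref{the:tent}, and the quantifier structure matches: the statement holds for every $M>1$ and for every suitable weak solution with divergence-free data $u_{0}\in L^{2}(\Rtre)$ satisfying the smallness condition.

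I do not expect any genuine obstacle here, since the corollary is a clean specialization rather than an independent result. The only subtlety worth flagging is the boundary case: Theorem~\ref{the:tent} is stated for $\widetilde{p}\in[2,4)$, so the endpoint $\widetilde{p}=2$ is included in its range, and the convention $\theta_{1}(2)=0$, $\theta_{2}(2)=1$ must be interpreted as the genuine (one-sided) limiting values established in \eqref{RoughDefinition1}--\eqref{RoughDefinition2}. Once one accepts that the theorem applies at this endpoint with these boundary values of $\theta_{1},\theta_{2}$, the corollary follows in two lines; the entire content is the reduction of the abstract interpolation norm $[u_{0}]_{\widetilde{p}}$ to the explicit weighted $L^{2}$ norm via \eqref{Consistent1}.
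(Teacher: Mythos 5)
Your proposal is correct and is exactly the paper's own derivation: the authors obtain Corollary \ref{cor:CKN} by specializing Theorem \ref{the:tent} to $\widetilde{p}=2$, where $\theta_{1}=0$ makes the first condition in \eqref{eq:assdata} vacuous and $\theta_{2}=1$ together with the identity $[u_{0}]_{2}=\||x|^{-1/2}u_{0}\|_{L^{2}_{x}}$ from \eqref{Consistent1} turns the second condition into the corollary's hypothesis. Your remark about the endpoint being included in the range $\widetilde{p}\in[2,4)$ with the stipulated boundary values $\theta_{1}(2)=0$, $\theta_{2}(2)=1$ matches the paper's convention, so nothing further is needed.
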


Thus, taking $M\to +\infty$, we see
that if the weighted $L^{2}$ norm of the data is sufficiently
small, then the regular set invades the whole half space
$t>0$, as claimed above.

The rest of the paper is organized as follows. 
In Section \ref{sec:prelim} we collect the necessary tools,
in particular
we recall the fundamental 
Caffarelli--Kohn--Nirenberg regularity criterion 
from \cite{CKN}; in Section \ref{sec:prelimBis} 
we prove Theorem \ref{WeighKato}, and
Section \ref{sec:proof} is devoted to the proof
of Theorem \ref{the:tent}.

%We use the following notations:
%$A \lesssim B$ means that there is a constant $C$ 
%independent of $A,B$ such that
%$A \leq C B$, $A \simeq B$ means
%$A \lesssim B$ and $B \lesssim A$, and
%for nested norms we write
%$$
%  \| f \|_{XYZ} := \|\|\| f \|_{Z}\|_{Y}\|_{X}.
%$$ 

\goodbreak

\section{Preliminaries}\label{sec:prelim}
\setcounter{equation}{0}

We recall some definitions from \cite{CKN}.

\begin{definition}
  Let $u_{0} \in L^{2}(\Rtre)$. The couple $(u,P)$ is a 
  \emph{suitable weak solution} 
  of Problem (\ref{CauchyNS}) 
  %in $[0,+\infty) \times \mathbb{R}^{3}$ 
  if\footnote{This definition of suitable weak solutions is appropriate to work with the initial datum $u_{0}$. For more details compare the Sections 2 and 7 of \cite{CKN}.}
  \begin{enumerate}
    \item $(u,P)$ satisfies (\ref{CauchyNS}) 
    in the sense of distributions;    
    %\item  $P \in L^{5/4}_{loc}(\mathbb{R}^{+} \times \Rtre)$ segue 
    %da P = R\otimesR (u\otimes u);
    \item $u(t) \rightarrow u_{0}$ weakly in $L^{2}$ as $t \rightarrow 0$;    
    \item for some constants $E_{0}, E_{1}$
    $$
      \int_{\Rtre} |u|^{2}(t) \ dx \leq E_{0}, 
      $$
    for all $t > 0$ and
     $$
     \int\int_{\mathbb{R}^{+} \times \Rtre} |\nabla u|^{2} \ dt dx \leq E_{1};
     $$
    %\item for all non negative $\phi \in C^{\infty}_{c}((0,+\infty) \times \Rtre)$
    %$$
    % 2 \int\int_{\mathbb{R}^{+} \times \Rtre} | \nabla u |^{2} \phi 
    % \leq 
    %  \int\int_{\mathbb{R}^{+} \times \Rtre} |u|^{2}(\phi_{t}
    %   + \Delta \phi) + (|u|^{2} + 2P)u \cdot \nabla \phi,
    %$$
    \item for all non negative $\phi \in C^{\infty}_{c}([0, \infty) \times \Rtre)$
    and for all $t > 0$
    \begin{eqnarray}\label{GenEnIneq}
    & & \int_{\Rtre} |u|^{2} \phi (t) + 2 \int_{0}^{t}\int_{\Rtre} | \nabla u |^{2} \phi
    \\ \nonumber    
    & \leq & \int_{\Rtre} |u_{0}|^{2} \phi (0) + \int_{0}^{t} \int_{\Rtre} |u|^{2} 
    (\phi_{t} + \Delta \phi) 
    +\int_{0}^{t}\int_{\Rtre} (|u|^{2} + 2P)u \cdot \nabla \phi.
    \end{eqnarray}        
  \end{enumerate} 
\end{definition}

Suitable weak solutions are known to exist for all $L^{2}$ initial 
data, see \cite{S2} or the Appendix in \cite{CKN}.
Such solutions are also $L^{2}$-weakly continous as functions 
of time (see \cite{Temam}, pp. 281--282), namely
\begin{equation}
\int_{\mathbb{R}^{3}} u(t,x) w(x) \ dx
\rightarrow
\int_{\mathbb{R}^{3}} u(t',x) w(x) \ dx
\end{equation}
for all $w \in L^{2}(\Rtre)$ as $t \rightarrow t'$ 
($t,t' \in [0,+\infty)$);
thus it makes sense to impose the initial condition (2).

Next we define the \emph{parabolic cylinder} 
of radius $r$ and top point $(t,x)$ as
\begin{equation*}
  Q_{r}(t,x) := 
  \left\{ (s,y): \quad |x-y| < r,\ \ t- r^{2} < s < t
  \right\}
\end{equation*}
while the \emph{shifted parabolic cylinder} is
\begin{equation*}
  Q^{*}_{r}(t,x) := Q_{r}(t + r^{2}/8,x) \equiv
  \left\{ (s,y): \quad |x-y| < r,\ \ t- 7r^{2}/8 < s < t+r^{2}/8
  \right\}
\end{equation*}
The crucial regularity result in \cite{CKN} ensures that:

\begin{lemma}
  There exists 
  an absolute constant $\varepsilon^{*}$ such that if $(u,P)$
  is a suitable weak solution of (\ref{CauchyNS}) and
  \begin{equation}\label{CKNConditionSB}
  \limsup_{r \rightarrow 0} \frac{1}{r}
  \int\int_{Q^{*}_{r}(t,x)} |\nabla u|^{2} \leq \varepsilon^{*},
  \end{equation}
  then $(t,x)$ is a regular point.
\end{lemma}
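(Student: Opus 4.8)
The plan is to reduce the stated gradient criterion to the more flexible $\varepsilon$-regularity criterion phrased in terms of the velocity and pressure, and then to prove the latter by a scaling iteration. Throughout I would work with the natural parabolic scaling $u\mapsto \lambda u(\lambda^{2}t,\lambda x)$, $P\mapsto \lambda^{2}P(\lambda^{2}t,\lambda x)$, which leaves \eqref{CauchyNS} invariant, and introduce the associated dimensionless quantities centred at the candidate point $(t,x)$:
\[
 A(r)=\sup_{t-r^{2}<s<t}\frac{1}{r}\int_{B_{r}}|u(s)|^{2}\,dy,\qquad
 E(r)=\frac{1}{r}\int\int_{Q_{r}(t,x)}|\nabla u|^{2},
\]
\[
 G(r)=\frac{1}{r^{2}}\int\int_{Q_{r}(t,x)}|u|^{3},\qquad
 K(r)=\frac{1}{r^{2}}\int\int_{Q_{r}(t,x)}|P|^{3/2},
\]
all invariant under the scaling above. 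The global energy bounds $E_{0},E_{1}$ of a suitable weak solution guarantee that these quantities are finite at a fixed initial scale, which anchors the iteration. The hypothesis \eqref{CKNConditionSB} is precisely $\limsup_{r\to0}E^{*}(r)\le\varepsilon^{*}$, where $E^{*}$ is $E$ computed over the shifted cylinder $Q^{*}_{r}$; the forward shift by $r^{2}/8$ in time is what makes $(t,x)$ an \emph{interior} point of $Q^{*}_{r}$ rather than a parabolic-boundary point, so that a forward-in-time smoothing estimate becomes available. Since the already quoted regularity theory (\cite{Ser}) upgrades local boundedness to smoothness, it suffices to prove that $u\in L^{\infty}$ on a small cylinder around $(t,x)$.

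The heart of the matter is a first $\varepsilon$-regularity statement: there exist $\varepsilon_{1}>0$ and $C$ such that $G(r)+K(r)\le\varepsilon_{1}$ for some $r$ forces $u$ to be essentially bounded on $Q_{r/2}(t,x)$. I would establish this by a decay iteration. Using the generalized energy inequality \eqref{GenEnIneq} with a cutoff adapted to $Q_{r}$ controls $A$ and $E$ at a smaller scale in terms of the cubic and pressure terms $G,K$; the Gagliardo--Nirenberg interpolation
\[
 \int_{B_{\rho}}|u|^{3}\lesssim
 \Bigl(\int_{B_{\rho}}|u|^{2}\Bigr)^{3/4}
 \Bigl(\int_{B_{\rho}}|\nabla u|^{2}\Bigr)^{3/4}
 +\rho^{-3/2}\Bigl(\int_{B_{\rho}}|u|^{2}\Bigr)^{3/2}
\]
converts these back into the velocity quantity, yielding $G\lesssim A^{3/4}E^{3/4}+A^{3/2}$, while the pressure Poisson equation $-\Delta P=\partial_{i}\partial_{j}(u_{i}u_{j})$ together with Calder\'on--Zygmund bounds turns $K$ into $G$ up to a harmonic tail. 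Combining these yields a recursive inequality of the type
\[
 G(\theta r)+K(\theta r)\le
 C\,\theta^{-a}\bigl(G(r)+K(r)\bigr)^{3/2}
 +C\,\theta^{\,b}\bigl(G(r)+K(r)\bigr),
\]
for fixed $a,b>0$ and any $\theta\in(0,1)$. Choosing $\theta$ small to make the linear term a contraction and then $\varepsilon_{1}$ small to tame the superlinear term gives geometric decay $G(\theta^{n}r)+K(\theta^{n}r)\to0$; this Morrey/Campanato-type decay of $\int\int_{Q_{\rho}}|u|^{3}$ translates into H\"older continuity of $u$, in particular boundedness. This is essentially the route of \cite{CKN}, later streamlined.

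It then remains to verify that the gradient hypothesis forces $G+K<\varepsilon_{1}$ at some scale. Here I would run the same three ingredients once more, now producing a \emph{coupled} system among $A,E,G,K$: the energy inequality bounds $A$ by $E$ plus the cubic/pressure terms, Gagliardo--Nirenberg gives $G\lesssim A^{3/4}E^{3/4}+A^{3/2}$, and the pressure estimate gives $K\lesssim G(2r)+(\text{tail})$. Feeding the smallness $\limsup E^{*}\le\varepsilon^{*}$ into this system shows that $A$, and hence $G$ and $K$, are uniformly small at all sufficiently small scales; fixing $\varepsilon^{*}$ so small that the resulting bound on $G+K$ lies below $\varepsilon_{1}$, the first $\varepsilon$-regularity statement applies at some $\rho$ near $(t,x)$ and delivers local boundedness, i.e. regularity.

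The hard part will be the pressure. Because $P$ solves a nonlocal Poisson equation, its local $L^{3/2}$ size cannot be bounded by $u$ on the same cylinder alone: splitting the source $\partial_{i}\partial_{j}(u_{i}u_{j})$ into a near part (handled by Calder\'on--Zygmund, giving a clean $G$ contribution) and a far part (harmonic inside the cylinder, hence controlled by interior estimates from a larger cylinder) leaves a tail term that must be reabsorbed across scales. Propagating smallness of the pressure quantity through the iteration without its being spoiled by far-field contributions is the delicate point, and it is exactly what dictates the precise choice of the decay parameter $\theta$ and the use of the shifted cylinder $Q^{*}_{r}$ to keep $(t,x)$ strictly interior.
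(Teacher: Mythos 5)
You should first note that the paper does not prove this lemma at all: it is quoted as a black box from \cite{CKN} (it is essentially Proposition 2 / Theorem B of that paper), so the only meaningful comparison is with the argument of \cite{CKN} itself. Your plan does reproduce the architecture of that argument: a first $\varepsilon$-regularity criterion in terms of the scale-invariant velocity and pressure quantities $G,K$; a reduction of the gradient criterion to it via the generalized energy inequality, interpolation, and Calder\'on--Zygmund estimates for the pressure; and a correct identification of the pressure tail and of the role of the shifted cylinder $Q^{*}_{r}$ in keeping $(t,x)$ interior.

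However, as written, your first step contains a genuine gap. Geometric decay of the Morrey quantity $G(\rho)=\rho^{-2}\int\int_{Q_{\rho}}|u|^{3}$ does \emph{not} imply boundedness of $u$. Your recursion gives at best $G(\theta^{n}r)\le (3/4)^{n}G(r)$, i.e.\ $G(\rho)\lesssim\rho^{\beta}$ with $\beta=\log(4/3)/\log(1/\theta)$ a small positive exponent; but since $|Q_{\rho}|\simeq\rho^{5}$, concluding a pointwise bound by Lebesgue differentiation,
\[
|u(z)|^{3}=\lim_{\rho\to0}\frac{1}{|Q_{\rho}(z)|}\int\int_{Q_{\rho}(z)}|u|^{3}
\simeq\lim_{\rho\to0}\rho^{-3}G_{z}(\rho),
\]
requires the sharp rate $G(\rho)\lesssim\rho^{3}$. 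A field with $|u(y)|\simeq|y|^{-\alpha}$, $\alpha>0$ small, satisfies $G(\rho)\simeq\rho^{3-3\alpha}\to0$ geometrically yet is unbounded, so ``Morrey decay of $\int\int|u|^{3}$ translates into H\"older continuity'' is false as stated. This is precisely why \cite{CKN} do not iterate $G+K$ itself: their Proposition 1 is proved by an induction over dyadic cylinders centred at every nearby point which carries the sharp rate $\rho^{3}$ (equivalently, uniform bounds on local averages of $|u|$), while the later streamlined proofs (Lin, Ladyzhenskaya--Seregin) iterate the Campanato excess $\rho^{-2}\int\int_{Q_{\rho}}|u-\bar{u}_{\rho}|^{3}$, whose decay at \emph{any} positive rate does yield H\"older continuity. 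One of these substitutes is needed. A second, smaller issue: in your final step the system cannot be closed with single-scale inequalities, because the term $A^{3/2}$ carries no smallness ($A$ is a priori only bounded, not small); one needs the two-scale form $G(r)\le C\bigl[(r/\rho)^{3}A(\rho)^{3/2}+(\rho/r)^{3}A(\rho)^{3/4}E(\rho)^{3/4}\bigr]$, so that smallness comes from $E(\rho)\le\varepsilon^{*}$ together with a small fixed ratio $r/\rho$, and only afterwards does $A$ (and then $K$) become small through the energy and pressure inequalities.
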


We shall make frequent use of
the following interpolation inequality from
\cite{CaffarelliKohnNirenberg84-a}
(see also \cite{DL, DenapoliDrelichmanDuran10-a} for extensions
of the inequality):

\begin{lemma}
  Assume that 
  \begin{enumerate}
    \item $r \geq 0$, $0 < a \leq 1$, 
    $\gamma < 3/r$, $\alpha < 3/2$, $\beta < 3/2$;
    \item $-\gamma + 3/r = a(-\alpha + 1/2) + (1-a)(-\beta + 3/2)$;
    \item $ a \alpha + (1-a)\beta \leq \gamma   $;
    \item when $- \gamma + 3/r = -\alpha + 1/2$,
    assume also that $\gamma \leq a (\alpha +1) + (1-a)\beta$.
  \end{enumerate}
  Then
  \begin{equation}\label{CKNInequalityCKNVersion}
    \| \sigma_{\nu}^{\gamma} u \|_{L^{r}(\mathbb{R}^{3})} \leq C
    \| \sigma_{\nu}^{\alpha} \nabla u \|^{a}_{L^{2}(\mathbb{R}^{3})} 
    \| \sigma_{\nu}^{\beta} u \|^{1-a}_{L^{2}(\mathbb{R}^{3})}, 
  \end{equation}
  where $\sigma_{\nu} := (\nu + |x|^{2})^{-1/2}$, 
  $\nu \geq 0$, with a constant $C$ independent of $\nu$.
\end{lemma}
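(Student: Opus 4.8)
The plan is to reduce the inequality to the two model cases $\nu=0$ and $\nu=1$ by scaling, and then to derive each of those from the weighted Hardy--Littlewood--Sobolev (Stein--Weiss) inequality together with a single H\"older interpolation. First I would dispose of the dependence on $\nu$. For $\nu>0$ set $v(y):=u(\sqrt{\nu}\,y)$ and note that $\sigma_{\nu}(\sqrt{\nu}\,y)=\nu^{-1/2}\sigma_{1}(y)$. A direct change of variables then shows that each of the three norms in \eqref{CKNInequalityCKNVersion} equals an explicit power of $\nu$ times the corresponding $\nu=1$ norm of $v$; for instance $\|\sigma_{\nu}^{\gamma}u\|_{L^{r}}=\nu^{\frac{3}{2r}-\frac{\gamma}{2}}\|\sigma_{1}^{\gamma}v\|_{L^{r}}$, and similarly for the gradient and the $L^{2}$ term. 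Hypothesis (2) is precisely the assertion that the power of $\nu$ produced by the left-hand side equals the one produced by the right-hand side, so the inequality for an arbitrary $\nu>0$ follows from the case $\nu=1$ \emph{with the same constant}. The case $\nu=0$, with the pure power weight $\sigma_{0}=|x|^{-1}$, is treated directly, and the final constant is the maximum of the two; in particular it is independent of $\nu$, as claimed.

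For the core homogeneous case I would use the pointwise representation (valid first for $u\in C^{\infty}_{c}$, then by density)
\begin{equation*}
  |u(x)|\le c\int_{\mathbb{R}^{3}}\frac{|\nabla u(y)|}{|x-y|^{2}}\,dy
  = c\,I_{1}(|\nabla u|)(x),
\end{equation*}
where $I_{1}$ is the first-order Riesz potential. Since this already involves only the \emph{positive} kernel $|x|^{-2}$, no Calder\'on--Zygmund theory is needed and the Stein--Weiss inequality for $I_{1}$ applies in its full exponent range, yielding the weighted Sobolev endpoint
\begin{equation*}
  \|\,|x|^{-\widetilde{\gamma}}u\|_{L^{s}(\mathbb{R}^{3})}
  \le C\,\|\,|x|^{-\alpha}\nabla u\|_{L^{2}(\mathbb{R}^{3})},
  \qquad -\widetilde{\gamma}+\tfrac{3}{s}=-\alpha+\tfrac12 .
\end{equation*}
The exponent constraints of Stein--Weiss are exactly the admissibility hypotheses (1) (namely $\widetilde{\gamma}<3/s$ and $\alpha<3/2$) together with the dimensional relation (2), while the borderline hypothesis (4) is the Stein--Weiss endpoint condition excluding the logarithmically divergent case. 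I would then interpolate by H\"older between this endpoint and the weighted $L^{2}$ norm: with $\tfrac1r=\tfrac{a}{s}+\tfrac{1-a}{2}$ and $\gamma=a\widetilde{\gamma}+(1-a)\beta$ one has
\begin{equation*}
  \|\sigma_{0}^{\gamma}u\|_{L^{r}}
  \le \|\,|x|^{-\widetilde{\gamma}}u\|_{L^{s}}^{a}\,
  \|\,|x|^{-\beta}u\|_{L^{2}}^{1-a},
\end{equation*}
and substituting the endpoint bound into the first factor yields \eqref{CKNInequalityCKNVersion} for $\nu=0$. Crucially, the intermediate weight $\widetilde{\gamma}=(\gamma-(1-a)\beta)/a$ must satisfy $\widetilde{\gamma}\ge\alpha$ for the Stein--Weiss input weight to be admissible, and $\widetilde{\gamma}\ge\alpha$ is \emph{equivalent} to hypothesis (3), $a\alpha+(1-a)\beta\le\gamma$. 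Thus conditions (1)--(4) are not ad hoc: (2) is scaling, (3) is the Stein--Weiss positivity $\widetilde{\gamma}\ge\alpha$, and (1),(4) are the admissibility and endpoint conditions.

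The case $\nu=1$ runs along the same lines with the bounded weight $\sigma_{1}=(1+|x|^{2})^{-1/2}$: one may either rerun the Stein--Weiss step for this non-singular (hence more benign) weight, or split $\mathbb{R}^{3}$ into $\{|x|>1\}$, where $\sigma_{1}\simeq|x|^{-1}$ and the homogeneous estimate applies, and $\{|x|\le1\}$, where $\sigma_{1}\simeq1$ and the bound reduces to the ordinary multiplicative Gagliardo--Nirenberg inequality. The step I expect to be the main obstacle is precisely the borderline regime of hypothesis (4): when $-\gamma+3/r=-\alpha+1/2$ the Stein--Weiss inequality sits exactly at its critical exponent, where it may fail by a logarithmic factor, so one must invoke the extra strict inequality $\gamma\le a(\alpha+1)+(1-a)\beta$ to remain on the admissible side. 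Verifying this endpoint carefully, while threading the constant uniformly in $\nu$ through the scaling reduction of the first step, is the delicate part. We note finally that the same inequality is proved in \cite{CaffarelliKohnNirenberg84-a} by an elementary radial integration-by-parts argument producing conditions (1)--(4) directly; that route avoids harmonic analysis altogether, at the cost of heavier bookkeeping.
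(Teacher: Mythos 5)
Your scaling step is fine: condition (2) is exactly the balance of the powers of $\nu$ under $v(y)=u(\sqrt{\nu}\,y)$, and the H\"older splitting $\gamma=a\widetilde{\gamma}+(1-a)\beta$, $\tfrac1r=\tfrac{a}{s}+\tfrac{1-a}{2}$ is legitimate. But note first that the paper does not prove this lemma at all --- it quotes it from \cite{CaffarelliKohnNirenberg84-a}, where the proof is an elementary integration-by-parts/radial argument --- so the real question is whether your harmonic-analysis route actually covers the stated range (1)--(4). It does not, and the gap is in the very first reduction: the pointwise bound $|u(x)|\le c\,I_{1}(|\nabla u|)(x)$ discards the gradient (curl-free) structure and is badly lossy at infinity, since for any nonzero $u\in C^{\infty}_{c}$ one has $I_{1}(|\nabla u|)(x)\gtrsim \|\nabla u\|_{L^{1}}\,|x|^{-2}$ for large $|x|$, even though $u$ itself vanishes there. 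Concretely, take $a=1$, $\alpha=-2$, $\gamma=-1$, $r=2$ (any $\beta<3/2$): all of (1)--(4) hold ($-\gamma+3/r=5/2=-\alpha+\tfrac12$, $\alpha\le\gamma$, $\gamma=\alpha+1$), and the inequality $\||x|\,u\|_{L^{2}}\le C\||x|^{2}\nabla u\|_{L^{2}}$ is true by a one-line integration by parts, namely $\int|x|^{2}|u|^{2}=-\tfrac{2}{5}\int|x|^{2}\,x\cdot u\nabla u\le\tfrac{2}{5}\||x|u\|_{L^{2}}\||x|^{2}\nabla u\|_{L^{2}}$. Yet your chain cannot produce it, because $\||x|\,I_{1}(|\nabla u|)\|_{L^{2}}^{2}\gtrsim\int_{|x|\ge1}|x|^{-2}\,dx=+\infty$ for \emph{every} nontrivial $u$: the first inequality in your chain bounds a finite quantity by an infinite one.

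Relatedly, your claim that conditions (1)--(4) correspond exactly to the Stein--Weiss admissibility conditions is false. Stein--Weiss for the kernel $|x|^{-\widetilde{\gamma}}|x-y|^{-2}|y|^{\alpha}$ from $L^{2}$ to $L^{s}$ additionally requires $-\alpha<3/2$ (i.e.\ $\alpha>-3/2$, violated above), $p\le q$ (i.e.\ $s\ge2$, equivalently $\widetilde{\gamma}\le\alpha+1$, which hypothesis (4) imposes only in the borderline case), and $\widetilde{\gamma}<3/s$; this last is \emph{not} implied by $\gamma<3/r$ and $\beta<3/2$, since $a(\widetilde{\gamma}-3/s)=(\gamma-3/r)+(1-a)(3/2-\beta)$ has undetermined sign. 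So your argument proves the lemma only on a strict subrange of (1)--(4); in the extended range (in particular $\gamma<0$ or $\alpha\le-3/2$) one must exploit the gradient structure, which is precisely what the elementary proof in \cite{CaffarelliKohnNirenberg84-a} does. (Secondary point: your $\nu=1$ patching of $\{|x|\le1\}$ and $\{|x|>1\}$ needs a cutoff and hence commutator terms $u\nabla\chi$, which you do not address; the cleaner route is that the scaling argument plus the $\nu=0$ case already yields uniformity, once the $\nu=1$ case is derived from monotonicity $\sigma_{1}\le\sigma_{0}$ where the weights' exponents have favorable signs --- but that again fails for negative exponents, so it too needs care.)
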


A key role in the following will be played
by time-decay estimates for convolutions 
with the heat and Oseen 
kernels. It is convenient to introduce the notation
\begin{equation*}
  \Lambda(\alpha, p, \p) := \alpha + \frac{2}{p} - \frac{2}{\p}.
  %\qquad
  %\Omega(\alpha, p, s) := \alpha + \frac{3}{p} + \frac{2}{s}.
\end{equation*}

\begin{proposition}[\cite{Ren}]\label{PDecayCor}
  Let $1 \leq p \leq q \leq \infty$, 
  $1 \leq \p \leq \q \leq \infty$ and
  \begin{equation}\label{eq:condDL(Heat)}
    \beta > -\frac 3q,\qquad \alpha< 3 - \frac{3}{p}, 
    \qquad
    \Lambda (\alpha,p,\p) \geq \Lambda (\beta,q,\q).
  \end{equation}
  For every multiindex $\eta$,
  \begin{enumerate}
    \item if
    $|\eta| + \frac{3}{p}-\frac{3}{q} + \alpha-\beta \geq0$,
    then
    \begin{equation}\label{PHeatDer}
      \||x|^{\beta} \partial^{\eta} 
      e^{t\Delta}u_{0}\|_{L^{q}_{|x|}L^{\q}_{\theta}} 
      \lesssim \frac{1}{t^{(|\eta| 
      + \frac{3}{p}-\frac{3}{q} + \alpha-\beta)/2}} 
      \| |x|^{\alpha} u_{0}\|_{L^{p}_{|x|}L^{\p}_{\theta}}, 
      \qquad t>0;
    \end{equation}
    \item if
    $1+ |\eta| + \frac{3}{p}-\frac{3}{q} + \alpha-\beta > 0$,
    then
    \begin{equation}\label{PGHeatDer}
      \||x|^{\beta} \partial^{\eta} e^{t\Delta} \mathbb{P} \nabla 
      \cdot F\|_{L^{q}_{|x|}L^{\q}_{\theta}} 
      \lesssim \frac{1}{t^{(1 + |\eta| 
      + \frac{3}{p} -\frac{3}{q} +\alpha -\beta)/2}} 
      \| |x|^{\alpha} F\|_{L^{p}_{|x|}L^{\p}_{\theta}}, 
      \qquad t>0.
    \end{equation} 
  \end{enumerate}
\end{proposition}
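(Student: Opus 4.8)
The plan is to prove both estimates by first removing the time variable through scaling, and then reducing the problem to a weighted convolution inequality in the mixed radial-angular spaces which I would close by separating the angular and radial integrations.

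First I would reduce to $t=1$. Using the scale-covariance of the heat equation, $e^{t\Delta}[u_{0}(\lambda\,\cdot\,)](x)=(e^{\lambda^{2}t\Delta}u_{0})(\lambda x)$, together with the homogeneity $\||x|^{\beta}g(\lambda\,\cdot\,)\|_{L^{q}_{|x|}L^{\q}_{\theta}}=\lambda^{-\beta-3/q}\||x|^{\beta}g\|_{L^{q}_{|x|}L^{\q}_{\theta}}$, one checks that both sides of \eqref{PHeatDer} transform under $u_{0}\mapsto u_{0}(\lambda\,\cdot\,)$ with matching powers of $\lambda$; the exponent $(|\eta|+3/p-3/q+\alpha-\beta)/2$ of $t$ is precisely the one making the inequality scale-invariant, and the same holds for \eqref{PGHeatDer}. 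Hence it suffices to prove the estimates at $t=1$, where the kernels are explicit: $\partial^{\eta}e^{\Delta}$ is convolution with the Schwartz function $\partial^{\eta}G$, $G(x)=(4\pi)^{-3/2}e^{-|x|^{2}/4}$, so $|\partial^{\eta}G(x)|\lesssim(1+|x|)^{-N}$ for every $N$; while $\partial^{\eta}e^{\Delta}\mathbb{P}\nabla\cdot$ is convolution with the Oseen tensor $K$, smooth and satisfying the standard decay $|K(x)|\lesssim(1+|x|)^{-4-|\eta|}$ (see \cite{Lem}). In both cases the task becomes to bound $\||x|^{\beta}(K*f)\|_{L^{q}_{|x|}L^{\q}_{\theta}}$ by $\||x|^{\alpha}f\|_{L^{p}_{|x|}L^{\p}_{\theta}}$ for a kernel with $|K(x)|\lesssim(1+|x|)^{-N}$ ($N$ arbitrarily large in the heat case, $N=4+|\eta|$ in the Oseen case).

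Next I would separate the two integrations. Writing $x=\rho\omega$, $y=\sigma\eta$ with $\rho,\sigma>0$ and $\omega,\eta\in\mathbb{S}^{2}$,
\[
K*f(\rho\omega)=\int_{0}^{\infty}\!\!\int_{\mathbb{S}^{2}}K(\rho\omega-\sigma\eta)\,f(\sigma\eta)\,\sigma^{2}\,d\eta\,d\sigma .
\]
For fixed $\rho,\sigma$ the inner angular operator $g\mapsto\int_{\mathbb{S}^{2}}K(\rho\omega-\sigma\eta)g(\eta)\,d\eta$ is controlled, by the generalized Young (Schur) inequality for integral operators (legitimate since $\p\le\q$, with $1/s=1-1/\p+1/\q$), by the quantity $\kappa(\rho,\sigma):=\sup_{\omega}\|K(\rho\omega-\sigma\,\cdot\,)\|_{L^{s}(\mathbb{S}^{2})}$ (the symmetric $\sup_{\eta}$ term being handled identically). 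Setting $F(\sigma):=\|f(\sigma\,\cdot\,)\|_{L^{\p}(\mathbb{S}^{2})}$, the whole estimate reduces to the one-dimensional weighted bound
\[
\Big\|\rho^{\beta}\!\int_{0}^{\infty}\kappa(\rho,\sigma)F(\sigma)\,\sigma^{2}\,d\sigma\Big\|_{L^{q}(\rho^{2}d\rho)}\lesssim\big\|\sigma^{\alpha}F\big\|_{L^{p}(\sigma^{2}d\sigma)} .
\]
To estimate $\kappa$ I would distinguish the regimes $\sigma\lesssim\rho$, $\sigma\gtrsim\rho$ and $\sigma\simeq\rho$. In the two off-diagonal regimes $|\rho\omega-\sigma\eta|\simeq\max(\rho,\sigma)$, giving $\kappa\lesssim(1+\max(\rho,\sigma))^{-N}$; in the diagonal regime the identity $|\rho\omega-\sigma\eta|^{2}=(\rho-\sigma)^{2}+\rho\sigma|\omega-\eta|^{2}$ shows, for $\rho\simeq\sigma$ large, that $\int_{\mathbb{S}^{2}}(1+|\rho\omega-\sigma\eta|)^{-Ns}\,d\eta\simeq\rho^{-2}$, hence the genuinely singular contribution $\kappa\lesssim\rho^{-2/s}$. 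One then majorizes the resulting radial kernel by pieces and applies a weighted Hardy/Schur estimate (equivalently, after the logarithmic substitution, Young's inequality on the multiplicative group $(\mathbb{R}^{+},d\rho/\rho)$, valid for $q\ge p$).

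The main obstacle is exactly the bookkeeping in this last step, namely verifying that the three stated hypotheses are precisely what makes the radial operator bounded. The conditions $\alpha<3-3/p$ and $\beta>-3/q$ guarantee convergence of the off-diagonal radial integrals as $\sigma\to0$ and $\rho\to0$ respectively (integrability of the weights against the kernel), whereas the scaling-type inequality $\Lambda(\alpha,p,\p)\ge\Lambda(\beta,q,\q)$ governs the diagonal regime $\rho\simeq\sigma$, where the angular gain encoded in $s$ (hence in $\p,\q$) enters through the exponent $-2/s$; the equality case $\Lambda(\alpha,p,\p)=\Lambda(\beta,q,\q)$ and the endpoints ($q=\infty$, and $\p$ or $\q\in\{1,\infty\}$) require separate care. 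Finally, the heat estimate \eqref{PHeatDer} is the easier case of this analysis: since $\partial^{\eta}G$ decays faster than any power, the diagonal and tail integrals converge under weaker constraints, and the argument is identical with $N$ taken arbitrarily large, while the extra factor $1+(\,\cdot\,)$ in \eqref{PGHeatDer} reflects the additional derivative carried by $\mathbb{P}\nabla\cdot$ and the matching decay $(1+|x|)^{-4-|\eta|}$ of the Oseen kernel.
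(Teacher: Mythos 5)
Your proposal is sound and follows what is essentially the proof of this result in its source: the paper itself gives no argument for Proposition \ref{PDecayCor} (it is imported from \cite{Ren}, where it is established by exactly this combination of pointwise heat/Oseen kernel bounds at $t=1$ after scaling, an angular Schur--Young step legitimate because $\p\le\q$, and one-dimensional weighted radial estimates, in the spirit of the Stein--Weiss machinery with angular integrability of \cite{DL}). Your attribution of each hypothesis to a regime checks out quantitatively --- the diagonal power count $\beta+2/q+2-2/s\le\alpha+2/p$ with $2/s=2-2/\p+2/\q$ is precisely $\Lambda(\beta,q,\q)\le\Lambda(\alpha,p,\p)$, the off-diagonal regimes force $\alpha<3-3/p$ and $\beta>-3/q$, and the Oseen tail $N=4+|\eta|>\beta-\alpha+3-3/p+3/q$ is exactly the strict inequality of part (2) --- though in a complete write-up you should retain the decay factor $(1+|\rho-\sigma|)^{2/s-N}$ in the diagonal bound on $\kappa$ (you record only $\rho^{-2/s}$; the extra factor is what makes the unit-scale radial Young step work) and handle the equality case $\Lambda(\alpha,p,\p)=\Lambda(\beta,q,\q)$ by summing the dyadic diagonal blocks via almost-orthogonality together with $\ell^{p}\hookrightarrow\ell^{q}$ (using $p\le q$) rather than a geometric series.
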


An easy consequence of Proposition (\ref{PDecayCor}) 
is the embedding
$$
  L^{p}_{|x|^{\alpha p}d|x|}L^{\p}_{d \theta} 
  \hookrightarrow
  B^{-1 + 3/q}_{q,\infty}
  \quad\text{if}\quad 
  \alpha = 1 - \frac{3}{p}, 
  \qquad \p \geq \frac{2p}{p-1},
  \qquad q \geq \max(p,\p),
$$
which is not needed in the following, but allows to
compare Theorem \ref{WeighKato} with earlier results;
recall also that 
$B^{-1 + 3/q}_{q,\infty}\hookrightarrow BMO^{-1}$ for $q>3$.
Indeed, using
estimate (\ref{PHeatDer}), %if $q \geq \max(p,\p)$ 
we can write
\begin{eqnarray}\nonumber
  \| e^{t \Delta} \phi \|_{L^{q}(\mathbb{R}^{3})}
  \le
  C t^{-(3/p - 3/q + \alpha)/2}
  \| |x| ^{\alpha}\phi \|_{L^{p}_{|x|}L^{\p}_{\theta}}
  \equiv
  C t^{-(1 - 3/q)/2}
  \| |x|^{\alpha} \phi \|_{L^{p}_{|x|}L^{\p}_{\theta}}
\end{eqnarray}
and then the embedding follows immediately from
the following 'caloric' defininition of 
Besov spaces (see e.g. \cite{Tat}):

\begin{definition}
  A distribution $\phi\in \mathscr{S}'$ belongs to
  $B^{-1 + 3/q}_{q, \infty}(\mathbb{R}^{3})$ ($q>3$) if
  and only if
  \begin{equation}\label{NSBEsovDef}
    \|e^{t \Delta} \phi\|_{L^{q}(\mathbb{R}^{3})} 
    \leq 
    Ct^{-(1 - 3/q)/2}
    \quad\text{for}\quad 
    0<t\le1.
  \end{equation}
  The best constant $C$ in (\ref{NSBEsovDef}) is equivalent
  to the norm
  $\| \phi \|_{B^{-1 + 3/q}_{q, \infty}(\mathbb{R}^{3})}$. 
\end{definition}

We conclude this section with an estimate 
for singular integrals in mixed radial-angular norms.
Let $K \in C^{1}(\mathbb{S}^{2})$
with zero mean value and
\begin{equation}
T f (x):=
\mathrm{PV} 
\int_{\mathbb{R}^{3}} f(x-y)\frac{K(\widehat{y})}{|y|^{n}} \ dy,
\qquad
\widehat{y}=\frac{y}{|y|}.
\end{equation}

\begin{theorem}
Let $1 < p < \infty$, $1 < \p < \infty$. 
%and $-3/p < \beta < 3 - 3/p$.
Then
\begin{equation}\label{RTinMixedNorm}
\| T f\|_{L^{p}_{|x|}L^{\p}_{\theta}}
\lesssim 
\| f\|_{L^{p}_{|x|}L^{\p}_{\theta}}.
\end{equation}
\end{theorem}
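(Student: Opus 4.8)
The plan is to exploit the homogeneity of the kernel by expanding its angular part into spherical harmonics and reducing the statement to uniform bounds, in the harmonic degree, for the resulting elementary pieces. I would write $K=\sum_{k\ge 1}K_{k}$, where $K_{k}$ is the projection of $K$ onto the surface spherical harmonics of degree $k$ on $\mathbb{S}^{2}$; the hypothesis that $K$ has zero mean removes the degree-zero term, so the sum starts at $k=1$. Correspondingly $T=\sum_{k\ge1}T_{k}$, where $T_{k}$ is the principal value convolution operator with kernel $K_{k}(\widehat{y})|y|^{-3}$. It then suffices to prove
\begin{equation*}
  \|T_{k}f\|_{L^{p}_{|x|}L^{\p}_{\theta}}\lesssim c_{k}\,\|f\|_{L^{p}_{|x|}L^{\p}_{\theta}}
\end{equation*}
with constants $c_{k}$ growing at most polynomially in $k$, together with a decay estimate for the $K_{k}$ that makes $\sum_{k}c_{k}$ summable; the regularity $K\in C^{1}(\mathbb{S}^{2})$ enters precisely here, through the eigenvalue relation $\Delta_{\mathbb{S}^{2}}K_{k}=-k(k+1)K_{k}$ for the Laplace--Beltrami operator.

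For a single piece $T_{k}$ I would pass to the Fourier side. By the Hecke--Bochner (Funk--Hecke) identity, a convolution operator with homogeneous kernel $Y(\widehat y)|y|^{-3}$, with $Y$ a degree-$k$ spherical harmonic, is a Fourier multiplier whose symbol is $\gamma_{k}Y(\widehat\xi)$, a homogeneous function of degree zero, with $|\gamma_{k}|\lesssim k^{-3/2}$. Thus each $T_{k}$ is a Calder\'on--Zygmund operator whose symbol is smooth on $\mathbb{S}^{2}$ with $C^{N}$ norms growing only polynomially in $k$. To turn this into a mixed-norm estimate I would split the action of $T_{k}$ into its angular and radial parts: writing $x=\rho\omega$, $y=r\sigma$ and integrating the kernel over the sphere first, the operator is controlled, away from the diagonal $r=\rho$, by a one-dimensional convolution on the multiplicative group $(0,\infty)$ whose kernel collects the uniform $L^{\p}(\mathbb{S}^{2})$ operator norms of the associated spherical operators; a Schur (Young) estimate in $\log\rho$ then yields the radial $L^{p}$ bound, the uniform angular $L^{\p}$ bounds following from the $L^{\p}(\mathbb{S}^{2})$ boundedness of the induced singular integrals on the sphere. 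The genuinely three-dimensional singularity concentrated near $r=\rho$, $\sigma=\omega$ is handled by the scalar Calder\'on--Zygmund bound for $T_{k}$ together with interpolation around the diagonal $p=\p$, where the mixed norm collapses to $L^{p}(\mathbb{R}^{3})$.

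Having constants $c_{k}\lesssim k^{M}\|K_{k}\|_{L^{\infty}(\mathbb{S}^{2})}$, the final step is summation. The eigenvalue relation above lets me trade smoothness for decay: combining the information coming from $C^{1}$ regularity of $K$ with the extra gain $|\gamma_{k}|\lesssim k^{-3/2}$ supplied by the multiplier, I obtain $\sum_{k}c_{k}<\infty$, which closes the argument in the full interior range $1<p,\p<\infty$ and proves \eqref{RTinMixedNorm}.

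The main obstacle is the single-piece mixed-norm estimate near the diagonal: unlike the off-diagonal region, the full singularity of $K_{k}(\widehat y)|y|^{-3}$ at $y=0$ does not separate into independent radial and angular factors, so one cannot simply combine a one-dimensional Schur test with spherical $L^{\p}$ bounds. Controlling this region with constants that are only polynomial in $k$---so that the smoothness of $K$ suffices to sum the series---is the delicate point; it is here that one must use the precise decay $|\gamma_{k}|\lesssim k^{-3/2}$ of the Funk--Hecke multipliers rather than crude operator-norm bounds.
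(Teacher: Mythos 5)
Your reduction to spherical-harmonic pieces does not close, and the gap is exactly where you yourself flag it: the mixed-norm bound for a single piece $T_{k}$. Each $T_{k}$ is a principal-value convolution operator with kernel $K_{k}(\widehat{y})|y|^{-3}$, of precisely the same homogeneous Calder\'on--Zygmund type as $T$ itself; so proving $\|T_{k}f\|_{L^{p}_{|x|}L^{\p}_{\theta}}\lesssim c_{k}\|f\|_{L^{p}_{|x|}L^{\p}_{\theta}}$ is not a simplification of the theorem but an instance of it, and none of the tools you invoke actually produces it. The scalar Calder\'on--Zygmund bound only gives the case $\p=p$, and ``interpolation around the diagonal $p=\p$'' cannot generate off-diagonal mixed-norm estimates: complex interpolation in the scale $L^{p}_{|x|}L^{\p}_{\theta}$ requires two mixed-norm endpoints with $\p\neq p$, which is exactly what is missing. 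Your Schur-test sketch is plausible in the off-diagonal region, but the near-diagonal region carries the full singularity and hence the full difficulty of the problem. There is also a secondary obstruction in the summation step: from $K\in C^{1}(\mathbb{S}^{2})$ one gets $\|K_{k}\|_{L^{2}(\mathbb{S}^{2})}\lesssim k^{-1}$ and hence only $\|K_{k}\|_{L^{\infty}(\mathbb{S}^{2})}\lesssim k^{-1/2}$, while the Funk--Hecke gain $|\gamma_{k}|\lesssim k^{-3/2}$ is available only in the $L^{2}=L^{2}_{|x|}L^{2}_{\theta}$ (Plancherel) bound; beating a polynomial growth $k^{M}$ in the single-piece constants therefore requires interpolating with that $L^{2}$ estimate, which confines the conclusion to a neighbourhood of $(p,\p)=(2,2)$ unless one adds a bootstrapping argument that you have not described.

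For contrast, the paper's proof avoids decomposing the kernel altogether. For $p>\p$ it writes, by duality in the radial variable, $\|Tf\|^{\p}_{L^{p}_{|x|}L^{\p}_{\theta}}=\sup_{g}\int_{\mathbb{R}^{3}}|Tf(x)|^{\p}g(|x|)\,dx$ over normalized radial weights $g$, applies the C\'ordoba--Fefferman weighted inequality $\int|Tf|^{\p}g\lesssim_{s}\int|f|^{\p}\bigl(Mg^{s}\bigr)^{1/s}$, and then exploits the key structural fact that $Mg^{s}$ is again radial, so that H\"older's inequality in $\rho$ together with the Hardy--Littlewood maximal theorem finishes the argument; the case $p=\p$ is classical and $p<\p$ follows by duality. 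If you wanted to salvage your scheme, you would need an independent proof of the single-piece mixed-norm bound --- and the natural way to obtain one is the weighted-inequality argument just described, which proves the full theorem directly and renders the spherical-harmonic decomposition unnecessary.
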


The inequality (\ref{RTinMixedNorm})  
has been recently proved 
by A. C\'{o}rdoba in the case $\p = 2$
(\cite{Cordoba}, Theorem 2.1);
essentially the same argument gives also the other cases. 

\begin{proof}
Consider first the case $p > \p$.
Let $1/q + \p/p = 1$ and denote by $X$ the set
of all $g \in \mathscr{S}(\mathbb{R})$ 
with $\int_{0}^{+\infty}g^{q}(\rho)\rho^{2} d\rho =1$.
Then we can write
\begin{equation*}  % eq allineate, numero al centro
\begin{split}
  \| Tf \|^{\p}_{L^{p}_{|x|}L^{\p}_{\theta}}  
  = &  
  \textstyle
   \left(
  \int_{0}^{+\infty} \left(
  \int_{\mathbb{S}^{2}} |Tf(\rho,\theta)|^{\p} \ dS_{\theta}
  \right)^{\frac{p}{\p}} \rho^{2} \ d\rho
  \right)^{\frac{\p}{p}}
\\
  = & 
  \textstyle
  \sup\limits_{g\in X}
  \int_{0}^{+\infty} 
  \int_{\mathbb{S}^{2}} |Tf(\rho,\theta)|^{\p} g(\rho) \rho^{2} 
  \ dS_{\theta} d\rho
\\
  = & 
  \textstyle
  \sup\limits_{g\in X}
  \int_{\mathbb{R}^{3}} |Tf(x)|^{\p} g(|x|) \ dx.
\end{split}
\end{equation*}
Write $I(f,g):=\int_{\mathbb{R}^{3}} |Tf(x)|^{\p} g(|x|)dx$.
By Proposition 1 in  \cite{CordobaFefferman} we have
\begin{equation*}
  \textstyle
  I(f,g) \lesssim_{s} \int_{\mathbb{R}^{3}} |f(x)|^{\p}
  \left( M g^{s} (x)\right)^{\frac{1}{s}} \ dx,
\end{equation*}
for all $1 < s < \infty$,
where $M$ is the Hardy--Littelwood maximal operator
and $g^{s}(x) = (g(|x|))^{s}$. 
Since $M g^{s}$ is radially symmetric, this can be written
\begin{equation*}
  \textstyle
  I(f,g) 
  \lesssim_{s} 
  \int_{0}^{+\infty} \int_{\mathbb{S}^{2}} |f(\rho,\theta)|^{\p}
  (M g^{s}(\rho))^{\frac{1}{s}} \ \rho^{2} dS_{\theta} d\rho.
\end{equation*}
Now, for $s < q = \frac{p}{p-\p}$,
H\"{o}lder's inequality with exponents $p/\p$, $q$ gives
\begin{equation*}  % eq allineate, numero al centro
\begin{split}
  I(f,g)   
  & \lesssim 
  \textstyle
  \left(
  \int_{0}^{+\infty} \left(
  \int_{\mathbb{S}^{2}} |f(\rho,\theta)|^{\p} dS_{\theta} 
  \right)^{\frac{p}{\p}} \rho^{2} \ d\rho 
  \right)^{\frac{\p}{p}}
  \left(\int_{0}^{+\infty}
  (M g^{s}(\rho))^{\frac{q}{s}} \ \rho^{2} d\rho
  \right)^{\frac{1}{q}}
\\
  & \lesssim 
  \| f \|^{\p}_{L^{p}_{|x|}L^{\p}_{\theta}}
  \| M g^{s} \|^{1/s}_{L^{q/s}(\mathbb{R}^{3})}
  \lesssim 
  \| f \|^{\p}_{L^{p}_{|x|}L^{\p}_{\theta}}
  \| g^{s} \|^{1/s}_{L^{q/s}(\mathbb{R}^{3})}
\\
  & \simeq
  \textstyle
  \| f \|^{\p}_{L^{p}_{|x|}L^{\p}_{\theta}}
  \left(\int_{0}^{+\infty} g^{q}(\rho)\rho^{2} 
  \ d\rho \right)^{\frac{1}{q}}=
  \| f \|^{\p}_{L^{p}_{|x|}L^{\p}_{\theta}}
\end{split}
\end{equation*}
and taking the supremum over all $g\in X$ we get the claim
in the case $p>\p$. The case $p=\p$ is classical, 
and the case $p<\p$ follows by duality.
\end{proof}

Using the continuity of $T$
in weighted Lebesgue spaces (see Stein \cite{Stein})
\begin{equation}\label{SteinIneqClassical}
  \textstyle
  \| |x|^{\beta} Tf \|_{L^{p}(\mathbb{R}^{3})}
  \lesssim \| |x|^{\beta} f\|_{L^{p}(\mathbb{R}^{3})}
  \quad\text{for}\quad 
  1 < p < \infty, \quad 
  -\frac{3}{p}< \beta < 3- \frac{3}{p}
\end{equation}
we can also obtain weighted versions of 
\eqref{RTinMixedNorm}. In particular, by interpolation of 
\begin{equation}
\begin{array}{lll}
(\ref{RTinMixedNorm}) & \mbox{in the case} & (\alpha_{0}, p_{0}, \widetilde{p}_{0}) = (0,2,10)
\\ 
(\ref{SteinIneqClassical}) & \mbox{in the case} & (\alpha_{1}, p_{1}, \widetilde{p}_{1}) = (-4/3,2,2),
\end{array}
\end{equation} 
with $\theta = 3/8$ 
($ \Rightarrow (\alpha_{\theta}, p_{\theta}, \widetilde{p}_{\theta})=(-1/2,2,4)$),
we get
\begin{equation}\label{ContinuityOfP}
\| |x|^{-1/2} Tf \|_{L^{2}_{|x|}L^{4}_{\theta}}
\lesssim \| |x|^{-1/2} f\|_{L^{2}_{|x|}L^{4}_{\theta}}.
\end{equation}

\begin{remark}\label{PContinuityRem}
We denote with $R_{j}$ the Riesz transform in the direction of the 
$j$-th coordinate and $R := (R_{1}, R_{2}, R_{3})$. 
By (\ref{SteinIneqClassical}, \ref{ContinuityOfP}) the
boundedness of $R_{j}$ in $L^{2}(\mathbb{R}^{3},|x|^{-1}dx)$ and
$L^{2}_{|x|}L^{4}_{\theta}(\mathbb{R}^{3},|x|^{-1}dx)$ follows, and so that
of $\mathbb{P} \equiv Id + R \otimes R$.  
\end{remark}

\section{Proof of Theorem \ref{WeighKato}}\label{sec:prelimBis}
\setcounter{equation}{0}

We first need two technical lemmas. By standard machinery,
integral estimates for the heat flow and for the bilinear operator
appearing in the Duhamel representation (\ref{IntegralCauchyNS})
can be deduced by the time-decay estimates
of Proposition \ref{PDecayCor}.
%(remember that we have defined 
%$\Lambda(\alpha, p, \p) := \alpha + \frac{2}{p} - \frac{2}{\p}$)

\begin{lemma}[\cite{Ren}]\label{IDecayCor}
  Let
  $\beta > - 3/q$, $ \alpha < 3 - 3/p$,
  $1 \leq \p \leq \q \leq \infty$,
  $1 < r < \infty$ and
  \begin{equation}\label{Cond:EquivTop<r}
  \begin{array}{lcl}
  1 \leq p \leq q \leq \infty & \mbox{if} & (|\eta| + \alpha-\beta)p + 1 < 0,  
  \\
  1 \leq p \leq q < \frac{3p}{(|\eta| + \alpha-\beta)p + 1} & \mbox{if} & 
  (|\eta| + \alpha-\beta)p + 1 \geq 0.
  \end{array}
  \end{equation}   
  Assume further that 
      \begin{equation}\label{OmegaScaling}
    |\eta| + \alpha + \frac{3}{p} = \beta + \frac{3}{q} + \frac{2}{r}, 
    \qquad 
    \Lambda (\alpha,p,\p) \geq \Lambda (\beta,q,\q).
  \end{equation}
  Then for every multiindex $\eta$ we have
  \begin{equation}\label{IHeat}
    \| |x|^{\beta} \partial^{\eta} 
    e^{t\Delta}u_{0}\|_{L^{r}_{t}L^{q}_{|x|}L^{\q}_{\theta}} 
    \lesssim  
    \| |x|^{\alpha} u_{0}\|_{L^{p}_{|x|}L^{\p}_{\theta}}.
  \end{equation}
\end{lemma}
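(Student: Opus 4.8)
The plan is to deduce the space--time bound \eqref{IHeat} from the pointwise--in--time decay estimate \eqref{PHeatDer} of Proposition \ref{PDecayCor} by a real interpolation argument. First, observe that the scaling identity in \eqref{OmegaScaling}, namely $|\eta|+\alpha+\tfrac3p=\beta+\tfrac3q+\tfrac2r$, forces the decay exponent in \eqref{PHeatDer} to be exactly critical: $\tfrac12(|\eta|+\tfrac3p-\tfrac3q+\alpha-\beta)=\tfrac1r$. Hence \eqref{PHeatDer} reads
\[
\||x|^\beta\partial^\eta e^{t\Delta}u_0\|_{L^q_{|x|}L^{\q}_\theta}\lesssim t^{-1/r}\,\||x|^\alpha u_0\|_{L^p_{|x|}L^{\p}_\theta},\qquad t>0,
\]
and since $t^{-1/r}\notin L^r_t(0,\infty)$ the naive approach of taking the $L^r_t$ norm fails. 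However $t^{-1/r}\in L^{r,\infty}_t(0,\infty)$ with norm $\simeq 1$, so the decay estimate does give the \emph{weak--type} bound that $u_0\mapsto |x|^\beta\partial^\eta e^{t\Delta}u_0$ is bounded from $L^{p}_{|x|^{\alpha p}d|x|}L^{\p}_\theta$ into $L^{r,\infty}_t L^q_{|x|}L^{\q}_\theta$. The task is therefore to upgrade this weak--type endpoint to the strong bound \eqref{IHeat}.

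Next I would set up the interpolation so that the spatial target space $Y:=L^q_{|x|}L^{\q}_\theta$ (together with its weight $|x|^\beta$ and the multiindex $\eta$) is kept fixed and only the source is perturbed. Keeping $p,\p,q,\q,\beta$ fixed, I choose two weights $\alpha_0<\alpha<\alpha_1$ so that the hypotheses \eqref{eq:condDL(Heat)} of Proposition \ref{PDecayCor} still hold; this is possible for $\alpha$ in the interior of the admissible interval, since $\alpha<3-\tfrac3p$ and $\Lambda(\alpha,p,\p)\ge\Lambda(\beta,q,\q)$ bound $\alpha$ strictly from above and below. Each $\alpha_j$ produces a critical decay rate $\sigma_j=\tfrac12(|\eta|+\tfrac3p-\tfrac3q+\alpha_j-\beta)$, hence a weak--type bound from $L^{p}_{|x|^{\alpha_j p}d|x|}L^{\p}_\theta$ into $L^{r_j,\infty}_t(Y)$ with $1/r_j=\sigma_j$ and $r_0\ne r_1$. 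Because $Y$ is the same Banach space at both endpoints, the vector--valued Lorentz identity $(L^{r_0,\infty}_t(Y),L^{r_1,\infty}_t(Y))_{\vartheta,r}=L^r_t(Y)$ holds whenever $1/r=(1-\vartheta)/r_0+\vartheta/r_1$ and $r_0\ne r_1$, so real interpolation yields boundedness from $(L^{p}_{|x|^{\alpha_0 p}d|x|}L^{\p}_\theta,\,L^{p}_{|x|^{\alpha_1 p}d|x|}L^{\p}_\theta)_{\vartheta,r}$ into $L^r_t(Y)$.

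It remains to match the interpolated source space with the one in \eqref{IHeat}. Choosing $\vartheta$ by $\alpha=(1-\vartheta)\alpha_0+\vartheta\alpha_1$ and using that $1/r_j=\sigma_j$ is an affine function of $\alpha_j$, the same $\vartheta$ automatically gives $1/r=(1-\vartheta)/r_0+\vartheta/r_1$, so the interpolation is consistent. For weighted Lebesgue spaces with equal exponent one has $(L^p(w_0),L^p(w_1))_{\vartheta,p}=L^p(w_0^{1-\vartheta}w_1^{\vartheta})=L^{p}_{|x|^{\alpha p}d|x|}L^{\p}_\theta$, and by monotonicity of the real method in the second index, $L^{p}_{|x|^{\alpha p}d|x|}L^{\p}_\theta=(\cdots)_{\vartheta,p}\hookrightarrow(\cdots)_{\vartheta,r}$ provided $p\le r$. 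Composing the two embeddings gives exactly \eqref{IHeat}.

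The main obstacle is the bookkeeping of the parameter conditions: one must verify that both perturbed configurations $\alpha_0,\alpha_1$ satisfy all of \eqref{eq:condDL(Heat)} and the range restriction \eqref{Cond:EquivTop<r} simultaneously, and in particular handle the boundary cases where the target $\alpha$ sits at an endpoint of the admissible interval (there one creates room by perturbing an auxiliary parameter, or by exploiting the strict inequality $\beta>-3/q$) and where the quantity $(|\eta|+\alpha-\beta)p+1$ changes sign in \eqref{Cond:EquivTop<r}. The other delicate point is the embedding $L^p(w_\vartheta)\hookrightarrow(L^p(w_0),L^p(w_1))_{\vartheta,r}$, which is where the hypothesis $1<r<\infty$ together with the range \eqref{Cond:EquivTop<r} must be used to guarantee $p\le r$; once these are in place, the remainder is the standard weak--type-to-strong-type real interpolation machinery.
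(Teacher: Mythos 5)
Your overall strategy --- converting the pointwise decay \eqref{PHeatDer} into a weak-type bound into $L^{r,\infty}_{t}(Y)$, $Y=L^{q}_{|x|}L^{\q}_{\theta}$, and upgrading to \eqref{IHeat} by real interpolation, with $p\le r$ entering through $L^{p}=L^{p,p}\hookrightarrow L^{p,r}$ --- is exactly the Marcinkiewicz argument the paper invokes (it cites Proposition 3.4 of \cite{Ren} and notes that \eqref{Cond:EquivTop<r} amounts to $p<r$). The gap is in your choice of what to perturb. You keep $p,\p,q,\q,\beta$ fixed and move only the source weight, $\alpha_{0}<\alpha<\alpha_{1}$, claiming that the hypotheses of Proposition \ref{PDecayCor} bound $\alpha$ \emph{strictly} from above and below. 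They do not: $\Lambda(\alpha,p,\p)\ge\Lambda(\beta,q,\q)$ is a non-strict lower bound, and since $\Lambda(\cdot,p,\p)$ is strictly increasing, the endpoint $\alpha_{0}<\alpha$ is inadmissible for \eqref{PHeatDer} whenever equality $\Lambda(\alpha,p,\p)=\Lambda(\beta,q,\q)$ holds. This equality case is not an exotic corner: it is allowed by the lemma, it is automatic for unweighted Lebesgue norms ($\Lambda(0,p,p)=\Lambda(0,q,q)=0$), and it is precisely the configuration in which this lemma is used in the proof of Theorem \ref{WeighKato}, where $\Lambda(\alpha,p,\p_{G})=\Lambda(0,q,q)=0$.

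Moreover, the fallbacks you sketch cannot repair this. Keeping $p,\p,q,\q$ fixed, any perturbation $(\alpha_{j},\beta_{j})$ of the two weights that remains admissible for Proposition \ref{PDecayCor} must satisfy, in the equality case, $\alpha_{j}-\beta_{j}\ge\alpha-\beta$, hence produces the decay rate
\begin{equation*}
  \sigma_{j}=\tfrac12\bigl(|\eta|+\tfrac3p-\tfrac3q+\alpha_{j}-\beta_{j}\bigr)
  \ \ge\ \tfrac1r ,
\end{equation*}
so all available weak-type exponents lie on one side of $1/r$; then $1/r$ is never strictly between $1/r_{0}$ and $1/r_{1}$ and the identity $(L^{r_{0},\infty}_{t}(Y),L^{r_{1},\infty}_{t}(Y))_{\vartheta,r}=L^{r}_{t}(Y)$ is never applicable. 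Perturbing $\beta$ alone has the additional defect of changing the target $Y$, which destroys the fixed-$Y$ vector-valued Lorentz identity you rely on. The way out --- and what the Marcinkiewicz argument of \cite{Ren} effectively does --- is to vary the radial integrability exponent itself: take $p_{0}<p<p_{1}$ with compensating weights $\alpha_{j}$ such that $\alpha_{j}+\tfrac{2}{p_{j}}=\alpha+\tfrac2p$, so that $\Lambda(\alpha_{j},p_{j},\p)$ is unchanged while the decay rate becomes $\tfrac1r+\tfrac12(\tfrac{1}{p_{j}}-\tfrac1p)$, which brackets $\tfrac1r$. The price is that the source interpolation now needs the weighted identity for couples with \emph{different} exponents, $(L^{p_{0}}(w_{0};L^{\p}_{\theta}),L^{p_{1}}(w_{1};L^{\p}_{\theta}))_{\vartheta,s}$ with $p_{0}\neq p_{1}$, rather than the equal-exponent Stein--Weiss identity you quote; with that modification, and a check of the residual constraints $p_{j}\le q$ and $\alpha_{j}<3-3/p_{j}$, your scheme can be carried through.
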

%the condition EquivTop<r and the scaling are equivalent to p < r and the scaling

\begin{remark}
Once we have assumed the scaling relation in (\ref{OmegaScaling}), 
it is straighforward to check that the assumption (\ref{Cond:EquivTop<r})
is equivalent to $p<r$.
\end{remark}

\begin{proof}
The family of estimates (\ref{IHeat}) follows by the family
of estimates (\ref{PHeatDer}) and by the
Marcinkiewickz interpolation theorem.
The condition $p<r$, which as remarked above turns out to 
be equivalent to (\ref{Cond:EquivTop<r}),
is necessary in order to apply the Marcinkiewickz theorem 
(see Proposition 3.4 in \cite{Ren} for details).
\end{proof}

\begin{lemma}\label{Lemma:DuhamTermBound}
  Let $3 < q < \infty $, 
    $2 < r < \infty$ satisfying
    $2/r + 3/q =1$.
   %\begin{equation}
   %\label{DGOmegaScalingq=p}
    %\frac 2r + \frac 3q   = 1.
    %\qquad \Lambda(\beta,q,\q) \geq 0.
   %\end{equation}
   Then 
  \begin{equation}\label{DGHeatq=p}
    \left\| \int_{0}^{t} e^{(t-s)\Delta} 
    \mathbbm{P} \nabla 
    \cdot (u \otimes v)(s) \  ds
    \right\|_{L^{r}_{t}L^{q}_{x}} 
    \lesssim
    \|  u \|_{L^{r}_{t}L^{q}_{x}} 
    \| v \|_{L^{r}_{t}L^{q}_{x}}.
   \end{equation}
\end{lemma}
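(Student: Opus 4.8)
The plan is to reduce the bilinear estimate to a one-dimensional fractional-integration inequality in the time variable. First I would freeze time and exploit the pointwise-in-$t$ smoothing estimate for the operator $e^{t\Delta}\mathbb{P}\nabla\cdot$ provided by part (2) of Proposition \ref{PDecayCor}. Specializing the mixed radial-angular norms to ordinary Lebesgue norms (take $\widetilde{p}=p$ and $\widetilde{q}=q$, so that $L^{p}_{|x|}L^{\p}_{\theta}=L^{p}_{x}$ and similarly on the target side), and choosing $\eta=0$, $\alpha=\beta=0$, and the integrability exponent $p=q/2$ on the right-hand side, estimate (\ref{PGHeatDer}) gives, for each fixed $0<s<t$,
\[
  \| e^{(t-s)\Delta}\mathbb{P}\nabla\cdot(u\otimes v)(s)\|_{L^{q}_{x}}
  \lesssim
  (t-s)^{-(1+3/q)/2}\,\| (u\otimes v)(s)\|_{L^{q/2}_{x}}.
\]
One checks directly that the hypotheses of Proposition \ref{PDecayCor} hold for this choice: $\beta=0>-3/q$, $\alpha=0<3-3/p=3-6/q$ (which holds because $q>3$), the balance $\Lambda(0,q/2,q/2)=0=\Lambda(0,q,q)$, and the positivity $1+3/p-3/q=1+3/q>0$ needed for part (2). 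Note that the Leray projection is already incorporated into this decay estimate, so its separate boundedness (Remark \ref{PContinuityRem}) is not needed here.

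Next I would apply H\"older's inequality in the space variable, $\|(u\otimes v)(s)\|_{L^{q/2}_{x}}\le\|u(s)\|_{L^{q}_{x}}\|v(s)\|_{L^{q}_{x}}$, so that the Duhamel term is dominated pointwise in $t$ by the scalar integral $\int_{0}^{t}(t-s)^{-\mu}\,\|u(s)\|_{L^{q}_{x}}\|v(s)\|_{L^{q}_{x}}\,ds$ with exponent $\mu:=(1+3/q)/2$. The decisive algebraic point is that the scaling relation $2/r+3/q=1$ forces $\mu=1-1/r$: indeed $3/q=1-2/r$ gives $\mu=(2-2/r)/2=1-1/r$. This is exactly the critical value at which convolution against $t^{-\mu}$ maps $L^{r/2}_{t}$ into $L^{r}_{t}$.

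Finally I would take the $L^{r}_{t}$ norm and invoke the one-dimensional Hardy--Littlewood--Sobolev inequality (equivalently, weak Young's inequality against the kernel $t^{-\mu}\in L^{1/\mu,\infty}$): for $0<\mu<1$ and $1<r/2<r<\infty$ the admissible relation is $1/r=2/r+\mu-1$, which is precisely $\mu=1-1/r$, so that
\[
  \Big\| \int_{0}^{t}(t-s)^{-\mu}\,h(s)\,ds\Big\|_{L^{r}_{t}}
  \lesssim \|h\|_{L^{r/2}_{t}},
  \qquad h(s):=\|u(s)\|_{L^{q}_{x}}\|v(s)\|_{L^{q}_{x}}.
\]
Bounding $\|h\|_{L^{r/2}_{t}}\le\|u\|_{L^{r}_{t}L^{q}_{x}}\|v\|_{L^{r}_{t}L^{q}_{x}}$ by the Cauchy--Schwarz inequality in $t$ then yields (\ref{DGHeatq=p}). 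The main thing to be careful about is the bookkeeping of exponents: one must verify both that the chosen parameters satisfy all the constraints of Proposition \ref{PDecayCor} and that the strict bounds $3<q<\infty$, $2<r<\infty$ keep $\mu\in(1/2,1)$ and $r/2\in(1,\infty)$, so that the Hardy--Littlewood--Sobolev inequality applies away from its endpoints; no genuinely hard analysis is involved beyond this matching.
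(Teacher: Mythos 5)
Your proof is correct, but it does not follow the paper's route, because the paper gives no proof at all: it simply cites the estimate as classical, pointing to Theorem 3.1(i) of Fabes--Jones--Rivi\`ere \cite{Fabes}, where the $L^{r}_{t}L^{q}_{x}$ framework for Navier--Stokes originates. What you have done is reconstruct that classical argument from the paper's own toolkit, and your bookkeeping checks out: with $\eta=0$, $\alpha=\beta=0$, $p=\widetilde{p}=q/2$, $\widetilde{q}=q$ the hypotheses of Proposition \ref{PDecayCor}(2) hold ($0>-3/q$; $0<3-6/q$ since $q>3$; $\Lambda(0,q/2,q/2)=\Lambda(0,q,q)=0$; $1+3/q>0$), yielding the kernel decay $(t-s)^{-\mu}$ with $\mu=(1+3/q)/2$ and the Leray projection already absorbed; the scaling relation $2/r+3/q=1$ forces $\mu=1-1/r\in(1/2,1)$, which is precisely the admissible exponent for one-dimensional fractional integration $L^{r/2}_{t}\to L^{r}_{t}$ (the one-sided integral $\int_{0}^{t}$ is dominated by the full convolution, so weak Young/HLS applies), and H\"older in $t$ with exponents $(2,2)$ closes the estimate. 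The strict inequalities $3<q<\infty$, $2<r<\infty$ keep you away from the endpoints $\mu=1$ and $r/2=1$ where HLS fails, as you note. Your derivation buys self-containedness --- Lemma \ref{Lemma:DuhamTermBound} becomes a corollary of Proposition \ref{PDecayCor}, which the paper has already stated, rather than an external import --- at the cost of the exponent-checking; the paper's citation buys brevity and defers to essentially the same argument in the literature.
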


The inequality (\ref{DGHeatq=p}) is well known,
see for instance Theorem 3.1(i) in \cite{Fabes}.
The $L^{r}_{t}L^{q}_{x}$ Lebesgue spaces have been extensively used
in the context of Navier--Stokes equation since \cite{Fabes, Giga}.

%PROOF OF DGHeatq=p
%\begin{proof}
%By Minkowski inequality and 
%the time-decay esimate (\ref{PGHeatDer}) 
%\begin{eqnarray}\nonumber
%& &
%\left\| 
%\int_{0}^{t}
%e^{(t-s)\Delta} \mathbb{P} \nabla \cdot (u \otimes v)(s) \ ds
%\right\|_{L^{r}_{t}L^{q}_{x}}
%\\ \nonumber
%& \lesssim &
%\left\|
%\int_{0}^{t} \frac{1}{(t-s)^{\left( 1 + \frac{3}{q/2} - \frac{3}{q} \right)/2}}
% \| |u| |v| \|_{L^{q/2}_{x}}(s)
% \ ds
%\right\|_{L^{r}_{t}}
%\\ \nonumber
%& = &
%\left\|
%\int_{0}^{t} \frac{1}{(t-s)^{\left( 1 + \frac{3}{q} \right)/2}}
%  \| u \|_{L^{q}_{x}} \| v \|_{L^{q}_{x}} (s)
% \ ds
%\right\|_{L^{r}_{t}},
%\end{eqnarray}
%by weak-Young inequality
%\begin{equation}
%\leq
%\| u \|_{L^{r}_{t}L^{q}_{x}}
%\| v \|_{L^{r}_{t}L^{q}_{x}}
%\| t^{-\left( 1 + \frac{3}{q} \right)/2}\|_{L^{k,\infty}_{t}},
%\end{equation}
%provided that $1=1/r +1/k$.
%So (\ref{DGHeatq=p}) follows once
%$\| t^{-(1+3/q)/2}\|_{L^{k,\infty}_{t}} < \infty$, namely
%\begin{equation}
%\left( 1 + \frac{3}{q} \right) \frac{k}{2} = 1
%\Leftrightarrow 
%\frac 2r + \frac 3q   = 1.
%\end{equation}

%\end{proof}

Using the previous estimates, it is a simple matter to prove
Theorem \ref{WeighKato}. We follow the scheme 
of the proof of Theorem
20.1(B) in \cite{Lem} and we take advance of the inequalities
(\ref{PHeatDer}, \ref{IHeat}). %See also \cite{Fabes, Giga}.

\begin{proof}[Proof of Theorem \ref{WeighKato}]

Let $\p_{G}:= 2p/(p-1)$.
We show that the space 
\begin{equation}\nonumber
X := \left\{ u \ : \ \| u \|_{L^{r}_{t}L^{q}_{x}} < \infty , \ \sup_{t >0} t^{1/2} \| u \|_{L^{\infty}_{x}}(t) < \infty \right\},
\end{equation}
equipped with the norm 
$\| \cdot \|_{X} := \| \cdot \|_{L^{r}_{t}L^{q}_{x}} + \sup_{t >0} t^{1/2} \| \cdot \|_{L^{\infty}_{x}}(t)$,
is an admissible path space with adapted space $X_{0} := L^{p}_{|x|^{\alpha p}d|x|}L^{\p_{G}}_{\theta}$.

The estimate $\| e^{t \Delta} f \|_{X} \lesssim \| f \|_{X_{0}}$
follows indeed by the inequalities (\ref{PHeatDer}, \ref{IHeat});
  %\begin{equation*}
   % \iff
   % \frac1p+\frac{2}{\widetilde{p}}\le 1,
  %\end{equation*}
%Notice that, $1/p + 2/ \p \leq 1$ is equivalent to $\p \geq 2p(p-1)$. 
it is straightforward to check that (\ref{Cond:EquivTop<r}) and $p,\p_{G} \le q$ 
are equivalent\footnote{Except that the value $q =p$ is not allowed in (\ref{AssumptOnQWK}).} to (\ref{AssumptOnQWK}) and that the last assumption in 
(\ref{OmegaScaling}) and in (\ref{eq:condDL(Heat)}) is satisfied because 
$\Lambda(\alpha,p, \p_{G}) = \Lambda(0,q,q) = \Lambda(0,\infty,\infty) = 0$. 
Notice also that the set of $q$ for which the third inequality in
(\ref{AssumptOnQWK}) is satisfied is not empty provided $p < 5$.

It remains to show that $\| B(u,v) \|_{X} \lesssim \| u \|_{X} \| v \|_{X}$.
%it suffices to prove $\| B(u,u) \|_{X} \lesssim \| u \|^{2}_{X}$.
The bound 
$\| B(u,v) \|_{L^{r}_{t}L^{q}_{x}} \lesssim 
\| u \|_{L^{r}_{t}L^{q}_{x}}  \| v \|_{L^{r}_{t}L^{q}_{x}}$
follows by Lemma \ref{Lemma:DuhamTermBound}. 
In order too estimate 
$\sup_{t>0} t^{1/2} \| B(u,v) \|_{L^{\infty}}(t)$,
we split this quantity as
\begin{equation*}
  \sup_{t>0} t^{1/2} \| B(u,v) \|_{L^{\infty}_{x}}(t) 
  \leq I+II
\end{equation*}
where
\begin{equation*}
  I=
  \textstyle
  \sup_{t>0} t^{1/2} 
  \left\| 
  \int_{0}^{t/2} e^{(t-s)\Delta} 
  \mathbb{P} \nabla \cdot (u \otimes v)(s) \ ds
  \right\|_{L^{\infty}_{x}}
\end{equation*}
\begin{equation*}
  II=
  \textstyle
  \sup_{t>0} t^{1/2}
  \left\|
  \int_{t/2}^{t} e^{(t-s)\Delta} 
  \mathbb{P} \nabla \cdot (u \otimes v)(s) \ ds
  \right\|_{L^{\infty}_{x}},
\end{equation*}
and we use Minkowski inequality and the 
time-decay estimate \eqref{PGHeatDer}. For $I$ we have
\begin{eqnarray}\nonumber
  I & \lesssim & 
  \sup_{t>0} t^{1/2}
  \int_{0}^{t/2}
  \frac{1}{(t-s)^{\left(1 + \frac{3}{q/2}\right)/2}}
  \| u \|_{L^{q}_{x}}
  \| v \|_{L^{q}_{x}}(s) \ ds
  \\ \nonumber
  & \lesssim &
  \sup_{t>0}
  t^{-3/q} \int_{0}^{t/2}
  \| u \|_{L^{q}_{x}}
  \| v \|_{L^{q}_{x}}(s) \ ds
  \\ \nonumber
  & \lesssim &
  \sup_{t>0}
  t^{-3/q}
  \| u \|_{L^{r}_{t}L^{q}_{x}}
  \| v \|_{L^{r}_{t}L^{q}_{x}}
  \left( \int \chi_{[0,t/2]}(s) \ ds \right)^{1 - \frac{2}{r}}
  \\ \nonumber
  & \lesssim & 
  \| u \|_{L^{r}_{t}L^{q}_{x}} 
  \| v \|_{L^{r}_{t}L^{q}_{x}}
  t^{-3/q -2/r +1} 
  = \| u \|_{L^{r}_{t}L^{q}_{x}}
  \| v \|_{L^{r}_{t}L^{q}_{x}}
\end{eqnarray}
while for $II$ we have
\begin{eqnarray}\nonumber
  II & \lesssim & 
  \sup_{t>0} t^{1/2}
  \int_{t/2}^{t}
  \frac{1}{(t-s)^{1/2}}\frac{1}{s}
  \left(s^{1/2} \| u \|_{L^{\infty}_{x}} (s)\right)
  \left(s^{1/2} \| v \|_{L^{\infty}_{x}} (s)\right) \ ds
  \\ \nonumber
  & \lesssim &
  \left(\sup_{t>0} t^{1/2}
  \| u \|_{L^{\infty}_{x}}\right)
  \left(\sup_{t>0} t^{1/2}
  \| v \|_{L^{\infty}_{x}}\right)
  \sup_{t>0}t^{-1/2}
  \int_{t/2}^{t} \frac{1}{(t-s)^{1/2}} \ ds
  \\ \nonumber
  & \lesssim &
  \left(\sup_{t>0} t^{1/2}
  \| u \|_{L^{\infty}_{x}}\right)
  \left(\sup_{t>0} t^{1/2}
  \| v \|_{L^{\infty}_{x}}\right)
  \sup_{t>0}t^{-1/2}
  \left[ (t-s)^{1/2} \right]_{t}^{t/2} 
  \\ \nonumber
  & \lesssim & 
  \left(\sup_{t>0} t^{1/2}
  \| u \|_{L^{\infty}_{x}}\right)
  \left(\sup_{t>0} t^{1/2}
  \| v \|_{L^{\infty}_{x}}\right). 
\end{eqnarray}
Summing up we obtain
\begin{equation*}
  \textstyle
  \| B(u,v) \|_{X} 
  \lesssim \| u \|_{L^{r}_{t}L^{q}_{x}}
  \| u \|_{L^{r}_{t}L^{q}_{x}}
  + \left( \sup_{t>0} t^{1/2} \| u \|_{L^{\infty}_{x}} \right)
  \left( \sup_{t>0} t^{1/2} \| v \|_{L^{\infty}_{x}} \right)
  \lesssim \| u \|_{X} \| u \|_{X} .
\end{equation*}
%which implies
%\begin{equation}
%\| B(u,v) \|_{X} \lesssim \| u \|_{X} \| v \|_{X}.
%\end{equation}
The existence of a unique solution $u$ to Problem 
\eqref{IntegralCauchyNS} satisfying
\begin{equation}\label{NotInftyBis}
    \| u \|_{L^{r}_{t}L^{q}_{x}} 
    +
    \sup_{t>0} t^{1/2} \| u \|_{L^{\infty}_{x}}(t)    
    \lesssim
    \||x|^{\alpha} u_{0} \|_{L^{p}_{|x|}L^{\p}_{\theta}}
\end{equation}
follows by Proposition \ref{PicardThm} and by the
obvious inequality
\begin{equation}\nonumber
\||x|^{\alpha} u_{0} \|_{L^{p}_{|x|}L^{\p_{G}}_{\theta}} 
\lesssim \||x|^{\alpha} u_{0} \|_{L^{p}_{|x|}L^{\p}_{\theta}}.
\end{equation}
 
Finally, inequality \eqref{NotInftyBis}
implies the boundedness of
the solution $u$ in 
$(\delta ,\infty) \times \Rtre$ for all $\delta > 0$, 
and this implies smoothness of the solution
(see Theorem 3.4 in \cite{Fabes} or 
\cite{Esc, Giga, Ser, Sohr, Struwe, WW}).

%In the case $3 \leq p < 5$ we have $\alpha \geq 0$ and $\p_{G} \leq p$, so we
%repeat the previous argument with $X_{0} := L^{p}_{|x|^{\alpha} dx} $
%(instead of $X_{0} := L^{p}_{|x|^{\alpha p}d |x|}L^{\p_{G}}_{\theta}$).
%We have again that 
%(\ref{Cond:EquivTop<r}) and $p \le q$ 
%are equivalent to (\ref{AssumptOnQWK}) and that the last assumption in 
%(\ref{OmegaScaling}) is satisfied because 
%$\Lambda(\alpha,p,p) = \alpha \geq 0 = \Lambda(0,q,q)$.
%The result follows by Proposition
%\ref{PicardThm} and by 
%$\||x|^{\alpha} u_{0} \|_{L^{p}_{x}} \lesssim \||x|^{\alpha} u_{0} \|_{L^{p}_{|%x|}L^{\p}_{\theta}}$.

\end{proof}

We denote with $BC([0,\infty);L^{2})$ the Banach space of
bounded continuous functions 
$u:[0,\infty)\to L^{2}$
equipped with the norm $\sup_{t\geq0} \| u(t) \|_{L^{2}}$.

\begin{corollary}\label{Corh:SmallExistenceAlsoInL2}
  Assume all the hypotheses of Theorem \ref{WeighKato} are satisfied,
  and in addition assume $u_{0} \in L^{2}(\Rtre)$.
  Then the solution $u(t)$ belongs to
  $BC([0,\infty);L^{2}(\Rtre))$. 
  In particular
  $u$ is a strong solution of \eqref{CauchyNS},
  $u(t) \rightarrow u_{0}$ strongly in $L^{2}(\Rtre)$
  as $t \rightarrow 0$,
  and the energy identity
  $\| u(t) \|^{2}_{L^{2}_{x}} + 2 \| \nabla u \|^{2}_{L^{2}_{t}L^{2}_{x}} 
    = \| u_{0} \|^{2}_{L^{2}}$
  holds for all $t >0$.
\end{corollary}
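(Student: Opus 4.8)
The plan is to start from the mild representation $u=e^{t\Delta}u_{0}-B(u,u)$ furnished by Theorem \ref{WeighKato}, where the solution already lies in the path space $X$ with $\|u\|_{X}\lesssim\bar\varepsilon$ small; in particular the quantity $\eta:=\sup_{s>0}s^{1/2}\|u(s)\|_{L^{\infty}_{x}}$ is controlled by $\bar\varepsilon$ and $u\in\Lrq$. Since we now assume $u_{0}\in L^{2}(\Rtre)$, the linear term $e^{t\Delta}u_{0}$ belongs to $BC([0,\infty);L^{2})$, is strongly continuous on $[0,\infty)$ (strong continuity of the heat semigroup, including at $t=0$), and satisfies $\|e^{t\Delta}u_{0}\|_{L^{2}}\le\|u_{0}\|_{L^{2}}$. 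Everything therefore reduces to controlling $B(u,u)$ in $L^{2}$, and the leverage is the smallness of $\|u\|_{X}$.

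For the uniform bound I would estimate $B$ by placing one factor in $L^{\infty}$ and one in $L^{2}$: using $\|(u\otimes u)(s)\|_{L^{2}}\le\|u(s)\|_{L^{\infty}_{x}}\|u(s)\|_{L^{2}}$, the one-derivative heat smoothing $\|e^{\tau\Delta}\mathbb{P}\nabla\cdot G\|_{L^{2}}\lesssim\tau^{-1/2}\|G\|_{L^{2}}$ (this is \eqref{PGHeatDer} with $p=q=2$ and no weights), and the decay $\|u(s)\|_{L^{\infty}_{x}}\le\eta s^{-1/2}$, one obtains
\begin{equation*}
\|B(u,u)(t)\|_{L^{2}}\lesssim\eta\Big(\sup_{0<s<t}\|u(s)\|_{L^{2}}\Big)\int_{0}^{t}(t-s)^{-1/2}s^{-1/2}\,ds=\pi\,\eta\,\sup_{0<s<t}\|u(s)\|_{L^{2}}.
\end{equation*}
The time integral is scale invariant, so this is bounded but has no decay. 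To turn it into an honest bound I would run it along the Picard iterates $u_{n}$ of \eqref{PicSequence}: each $u_{n}$ is bounded in $L^{2}$, and since $\sup_{n}\|u_{n}\|_{X}\lesssim\bar\varepsilon$ the recursion $R_{n}\le\|u_{0}\|_{L^{2}}+C\bar\varepsilon R_{n-1}$ (with $R_{n}:=\sup_{t}\|u_{n}(t)\|_{L^{2}}$) closes for $\bar\varepsilon$ small, giving $R_{n}\le\|u_{0}\|_{L^{2}}/(1-C\bar\varepsilon)$ uniformly. The same estimate applied to differences shows $(u_{n})$ is Cauchy in $BC([0,\infty);L^{2})$; its limit must be $u$, whence $u\in BC([0,\infty);L^{2})$ with $\sup_{t}\|u(t)\|_{L^{2}}\lesssim\|u_{0}\|_{L^{2}}$.

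For strong continuity at $t=0$ I need the stronger statement $\|B(u,u)(t)\|_{L^{2}}\to0$ as $t\to0$, which the scale-invariant estimate above cannot provide. Here I would instead interpolate $\|u(s)\|_{L^{4}_{x}}$ between the now-controlled $\|u(s)\|_{L^{2}}$ and $\|u(s)\|_{L^{q}_{x}}$ — using in addition the $L^{\infty}_{x}$ decay when $q<4$ — write $\|(u\otimes u)(s)\|_{L^{2}}=\|u(s)\|_{L^{4}_{x}}^{2}$, apply the same $\tau^{-1/2}$ smoothing, and use H\"older in time. Because $2/r+3/q=1$, the exponents work out to
\begin{equation*}
\|B(u,u)(t)\|_{L^{2}}\lesssim t^{\kappa}\Big(\int_{0}^{t}\|u(s)\|_{L^{q}_{x}}^{r}\,ds\Big)^{\sigma}
\end{equation*}
for some $\kappa>0$ and $\sigma>0$ (for $q\ge4$ one finds $\kappa=\tfrac{1}{2q-4}$). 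Since $u\in\Lrq$, the local-in-time quantity $\int_{0}^{t}\|u(s)\|_{L^{q}_{x}}^{r}\,ds\to0$ by absolute continuity of the integral, and $t^{\kappa}\to0$, so $\|B(u,u)(t)\|_{L^{2}}\to0$; combined with $\|e^{t\Delta}u_{0}-u_{0}\|_{L^{2}}\to0$ this yields $\|u(t)-u_{0}\|_{L^{2}}\to0$.

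Finally, for the energy identity: on any interval $[\delta,t]$ with $\delta>0$ the solution is smooth by Theorem \ref{WeighKato} and has enough spatial decay (it is smooth and lies in $L^{2}_{x}\cap L^{q}_{x}$ for each $s\in[\delta,t]$) to justify multiplying \eqref{CauchyNS} by $u$ and integrating by parts; the pressure term vanishes since $\nabla\cdot u=0$ and the transport term vanishes since $\int(u\cdot\nabla)u\cdot u=-\tfrac12\int|u|^{2}\nabla\cdot u=0$, leaving $\|u(t)\|_{L^{2}}^{2}+2\int_{\delta}^{t}\|\nabla u\|_{L^{2}}^{2}=\|u(\delta)\|_{L^{2}}^{2}$. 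Letting $\delta\to0$ and using the strong $L^{2}$ continuity at $0$ together with monotone convergence gives the stated identity (in particular $\nabla u\in L^{2}_{t}L^{2}_{x}$), so $u$ is a strong solution (alternatively the identity may be invoked from the Serrin-class regularity theory cited after the proof of Theorem \ref{WeighKato}). The hard part will be precisely this $t\to0$ limit of the nonlinear term: the critical scaling $2/r+3/q=1$ forces the plain $L^{2}$ bilinear estimate to be scale invariant and hence non-vanishing, so the decay must be extracted from the subcritical room in the interpolation together with the absolute continuity of the $\Lrq$ norm near $t=0$.
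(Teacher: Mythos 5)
Your proof is correct, and its engine is the same as the paper's: the bilinear bound obtained by placing one factor in $L^{\infty}_{x}$ (paying $s^{-1/2}$), the other in $L^{2}_{x}$, using the $(t-s)^{-1/2}$ smoothing of $e^{(t-s)\Delta}\mathbb{P}\nabla\cdot$ and the scale-invariant integral $\int_{0}^{t}(t-s)^{-1/2}s^{-1/2}\,ds$ --- this is exactly \eqref{Techn:L2Cont}--\eqref{Techn:L2ContFinal}. Where you genuinely diverge is in how that estimate is converted into the conclusion. The paper feeds it into the abstract Proposition \ref{PicardThm} applied to the intersected spaces $X\cap BC([0,\infty);L^{2}_{x})$ and $X_{0}\cap L^{2}_{x}$; since that proposition demands smallness of the \emph{full} adapted norm \eqref{SmallnessBefResc}, and $\|u_{0}\|_{L^{2}}$ is not assumed small, the paper must then remove this obstruction by the rescaling \eqref{RescInL2Cont}, under which the weighted norm is invariant while $\|u_{0}^{\lambda}\|_{L^{2}}\to0$. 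You avoid the rescaling altogether by exploiting that the recursion for $R_{n}=\sup_{t}\|u_{n}(t)\|_{L^{2}}$ is \emph{linear} in $R_{n-1}$ with coefficient $C\bar\varepsilon$ controlled only by the $X$-norms of the iterates, so it closes for data of arbitrary $L^{2}$ size; this is arguably cleaner. Second, you explicitly prove $\|B(u,u)(t)\|_{L^{2}}\to0$ as $t\to0$ by interpolating $L^{4}_{x}$ between $L^{2}_{x}$ and $L^{q}_{x}$ and using H\"older in time to extract a positive power $t^{\kappa}$ (your exponent $\kappa=\tfrac{1}{2q-4}$ is exactly right); this verifies that $B$ really maps into $BC$, i.e.\ is continuous at $t=0$, a fact the scale-invariant estimate alone cannot give and which the paper's application of Proposition \ref{PicardThm} leaves implicit. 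So your argument fills in a point the paper glosses over, at the cost of being less modular.

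Two small tightenings. First, your ``Cauchy in $BC([0,\infty);L^{2})$'' step should be phrased as Cauchy in $L^{\infty}_{t}L^{2}_{x}$: continuity of each iterate at $t=0$ is precisely what is at stake (it requires the same interpolation argument applied to $B(u_{n-1},u_{n-1})$), so it cannot be presupposed; the clean order is to get $u\in L^{\infty}_{t}L^{2}_{x}$ from the recursion, then apply your interpolation bound to the fixed point $u$ itself to obtain continuity at $t=0$, with continuity at interior times coming from smoothness. Second, both your recursion (which needs $C\bar\varepsilon<1$) and the paper's argument require the $\bar\varepsilon$ of Theorem \ref{WeighKato} to be taken possibly smaller to accommodate the new constants; this is harmless because $\bar\varepsilon$ is existential, but it is worth stating, as the paper does.
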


\begin{proof}
  Let $X, X_{0}$ be the same admissible and adapted spaces
  used in the proof of Theorem \ref{WeighKato}.
  As in that proof, we shall show that the space
  $X \cap BC([0,\infty);L^{2}_{x})$ equipped with the norm 
  $\| \cdot \|_{X} + \| \cdot \|_{L^{\infty}_{t}L^{2}_{x}}$ 
  is an admissible path space with adapted space 
  $X_{0} \cap L^{2}_{x}$ equipped with 
  the norm $\| \cdot \|_{X_{0}} + \| \cdot \|_{L^{2}_{x}}$.

  The estimate 
  $\| e^{t \Delta} f \|_{X \cap BC([0,\infty);L^{2}_{x})} 
  \lesssim \| f \|_{X_{0} \cap L^{2}_{x}}$ again follows 
  by (\ref{PHeatDer}, \ref{IHeat}). Since we have
  already proved $\|B(u,v)\|_{X} \lesssim \| u \|_{X} \| v \|_{X}$, 
  it remains to show that
  $\|B(u,v)\|_{L^{\infty}_{t}L^{2}_{x}} \lesssim  
    \| u \|_{X \cap BC([0,\infty);L^{2}_{x})}
    \| v \|_{X \cap BC([0,\infty);L^{2}_{x})}$.
  By Minkowski inequality and Proposition \ref{PGHeatDer},
\begin{eqnarray}\label{Techn:L2Cont}
\| B(u,v) \|_{L^{\infty}_{t}L^{2}_{x}} 
& \lesssim &
\sup_{t>0} \int_{0}^{t} \frac{1}{(t-s)^{1/2}} \frac{1}{s^{1/2}} 
\left(s^{1/2} \| u \|_{L^{\infty}_{x}} (s) \right) \| v \|_{L^{2}_{x}}(s) \ ds
\\ \nonumber
& \leq & 
\left(\sup_{t>0} t^{1/2} \| u \|_{L^{\infty}_{x}} (t) \right) 
\| v \|_{L^{\infty}_{t}L^{2}_{x}} 
\sup_{t>0}\int_{0}^{t} (t-s)^{-1/2} s^{-1/2} \ ds.
\end{eqnarray}
Since $\int_{0}^{t} (t-s)^{-1/2} s^{-1/2} \ ds \leq C$ with
$C$ independent of $t$, (\ref{Techn:L2Cont}) implies
\begin{equation}\label{Techn:L2ContFinal}
\| B(u,v) \|_{L^{\infty}_{t}L^{2}_{x}}
\lesssim \left(\sup_{t>0} t^{1/2} \| u \|_{L^{\infty}_{x}} (t) \right) 
\| v \|_{L^{\infty}_{t}L^{2}_{x}}
\lesssim 
\| u \|_{X \cap BC([0,\infty);L^{2}_{x})}
\| v \|_{X \cap BC([0,\infty);L^{2}_{x})}.
\end{equation}
These inequalities allow us to
apply Proposition \ref{PicardThm}, and we obtain that
$u \in X \cap BC([0,\infty);L^{2}_{x})$ provided
\begin{equation}\label{SmallnessBefResc}
\| u_{0} \|_{X_{0}} = \| |x|^{\alpha} u_{0} \|_{L^{p}_{|x|}L^{\p}_{\theta}}
+ \| u_{0} \|_{L^{2}_{x}} < \bar{\varepsilon},
\end{equation}
with an $\bar{\varepsilon}$ possibly smaller
than in Theorem \ref{WeighKato}. 
On the other hand, rescaling the initial data
and the solution as
\begin{equation}\label{RescInL2Cont}
u_{0}^{\lambda} = \lambda u_{0}(\lambda x), 
\quad
u^{\lambda} = \lambda u(\lambda^{2} t, \lambda x),
\quad 
\lambda > 0 ;
\end{equation}
% the transformation (\ref{RescInL2Cont})
we see that all norms remain fixed with the exception of
\begin{equation}
\| u^{\lambda}_{0} \|_{L^{2}_{x}}, 
\
\| u^{\lambda} \|_{L^{\infty}_{t}L^{2}_{x}}
\rightarrow 0
\quad 
\mbox{as}
\quad
\lambda \rightarrow + \infty,
\end{equation}
so that (\ref{SmallnessBefResc}) is satisfied 
by $u_{0}^{\lambda}$, provided
$\| |x|^{\alpha} u_{0} \|_{L^{p}_{|x|}L^{\p}_{\theta}} = \rho < \bar{\varepsilon}$ and $\lambda = \lambda(\rho)$ is large enough. In this way we 
find that $\| |x|^{\alpha} u_{0} \|_{L^{p}_{|x|}L^{\p}_{\theta}} < \bar{\varepsilon}$
implies $u^{\lambda} \in BC([0,\infty);L^{2}_{x})$ and 
hence $u \in BC([0,\infty);L^{2}_{x})$.

In particular we have $u(t) \rightarrow u_{0}$ strongly in $L^{2}(\Rtre)$
as $t \rightarrow 0^{+}$. By this remark, and by the smoothness of $u$,
it follows that $u$ is a strong solution 
of (\ref{CauchyNS}) %(in particular $u$ is suitable) 
which satisfies the energy identity
\begin{equation}
\| u(t) \|^{2}_{L^{2}_{x}}
+ 2 \| \nabla u \|^{2}_{L^{2}_{t}L^{2}_{x}}
= \| u_{0} \|^{2}_{L^{2}},
\qquad t \geq 0.
\end{equation}
 \end{proof}

\begin{remark}
It is straightforward to check that the solution
constructed in Corollary \ref{Corh:SmallExistenceAlsoInL2}
% are %suitable and 
is unique in the class 
of the weak solutions satisfying the energy inequality. 
More precisely, if 
$u'$ is another weak solution of (\ref{CauchyNS}) 
satisfying
\begin{equation}
\| u'(t) \|^{2}_{L^{2}_{x}}
+ 2 \| \nabla u'(t) \|^{2}_{L^{2}_{t}L^{2}_{x}}
\leq \| u_{0} \|^{2}_{L^{2}},
\qquad
t > 0,
\end{equation}
the boundedness condition $\| u \|_{L^{r}_{t}L^{q}_{x}} < \infty$
allows to apply the well known Prodi-Serrin uniqueness criterion
 (\cite{Prodi,Ser2}) to conclude $u=u'$.
\end{remark}

\section{Proof of Theorem \ref{the:tent}}\label{sec:proof}
\setcounter{equation}{0}

We note that the statement of Theorem \ref{the:tent}
is invariant 
with respect to the natural scaling of the equation
\begin{equation}
  u_{0}(x) \rightarrow u_{0}^{\lambda}(x) 
  := \lambda u_{0}(\lambda x) ,
  \qquad
  u (t,x) \rightarrow u^{\lambda}(t, x) 
  := \lambda u (\lambda^{2} t ,\lambda x).
\end{equation}
Thus it is sufficient to prove the result for 
$u_{0}^{\lambda}(x)$, $u^{\lambda}(t,x)$ instead of
$u_{0}(x)$, $u(t,x)$, for an appropriate choice
of the parameter $\lambda$. We choose 
$\lambda=\overline{\lambda}$ such that
the following two quantities are equal:
\begin{equation}\label{Omega1}
  \Gamma_{1}(\lambda, u_{0}, \widetilde{p}) :=
  \left( \int_{0} ^{+\infty}
  \| u^{\lambda}_{0} (\rho \ \cdot) \|^{\widetilde{p} / 2}_{L^{\widetilde{p}}_{\theta}}  
  \ \rho \ d\rho \right)^{\frac{1}{2}}
  \equiv
  \lambda^{\frac{\widetilde{p}}{4}-1}
  \||x|^{-\frac{2}{\widetilde{p}}}u_{0}\|^{\frac{\widetilde{p}}{4}}
  _{L^{\widetilde{p}/2}_{|x|}L^{\widetilde{p}}_{\theta}}
  % \qquad \widetilde{p} \in [2,4),
\end{equation}
\begin{equation}\label{Gamma1}
  \Gamma_{2} (\lambda, u_{0}, \widetilde{p}) :=
  \left( \int_{0} ^{+\infty}
  \| u^{\lambda}_{0} (\rho \ \cdot) \|^{\widetilde{p}}_{L^{\widetilde{p}}_{\theta}}  
  \ \rho \ d\rho \right)^{\frac{1}{2}}
  \equiv
  \lambda^{\frac{\widetilde{p}}{2}-1}
  \||x|^{-\frac{1}{\widetilde{p}}}u_{0}\|^{\frac{\widetilde{p}}{2}}
  _{L^{\widetilde{p}}_{x}}.
  % \qquad \p \in [2,4),  
\end{equation}
It obvious that such a $\overline{\lambda}$ exists and that
\begin{equation}\label{eq:omga}
  \Gamma_{1}(\overline{\lambda}, u_{0}, \widetilde{p})=
  \Gamma_{2}(\overline{\lambda}, u_{0}, \widetilde{p})=
  [u_{0}]_{\widetilde{p}}
  \equiv \epsilon.
\end{equation}
In the rest of the proof we shall drop the index $\overline{\lambda}$
and write simply $u_{0}:=u_{0}^{\overline{\lambda}}$,
$u:=u^{\overline{\lambda}}$.

We divide the proof into several steps.
Note that in the course of the proof we shall reserve the symbol
$Z$ to denote several universal constants, which do not depend
on $u_{0},u$ and $\widetilde{p}\in[2,4]$, and which may be
different from line to line (and of course the final meaning of
$Z$ will be the maximum of all such constants).

\subsection{Decomposition of the data}
For $s>0$ to be chosen, we write
\begin{equation*}
  u_{0,<s}(x):=u_{0}(x)
  \ \ \text{if $|u_{0}(x)|<s$,}
  \qquad
  u_{0,<s}(x):=0
  \ \ \text{elsewhere}
\end{equation*}
and we decompose the initial data as
\begin{equation*}
  u_{0}=v_{0}+w_{0},
  \qquad
  w_{0}:=\mathbb{P}u_{0,<s},
  \qquad
  v_{0}:=\mathbb{P}(u_{0}-u_{0,<s})
\end{equation*}
which is possible since $u_{0}=\mathbb{P}u_{0}$.
It is clear that $v_{0},w_{0}$ are divergence free.
Moreover one has
\begin{equation}\label{eq:decomp}
  \begin{array}{lcl}
  \| |x|^{-1/2} w_{0}\|_{L^{2}_{|x|}L^{4}_{\theta}} 
  &\leq & Z s^{1-\frac{\widetilde{p}}{4}} ( \int
  \| u_{0} (\rho \ \cdot) \|^{\widetilde{p} / 2}
      _{L^{\widetilde{p}}_{\theta}}  
  \ \rho \ d\rho )^{\frac{1}{2}} 
  =
  Z s^{1-\frac{\widetilde{p}}{4}}
  \epsilon
  \\
  && \\
  \| |x|^{-1/2} v_{0}\|_{L^{2}_{x}}
  &\leq & Z s^{1-\frac{\widetilde{p}}{2}} 
  ( \int
  \| u_{0} (\rho \ \cdot) \|^{\widetilde{p}}_{L^{\p}_{\theta}}  
   \ \rho \ d\rho )^{\frac{1}{2}}
   =
   Z s^{1-\frac{\widetilde{p}}{2}}
   \epsilon
  \end{array}
\end{equation}
for some universal constant $Z\ge1$.
To prove \eqref{eq:decomp}, we use first the fact that
the Leray projection $\mathbb{P}$ is bounded on the weighted 
spaces $L^{2}(\mathbb{R}^{3},|x|^{-1}dx)$ and
$L^{2}_{|x|}L^{4}_{\theta}(\mathbb{R}^{3},|x|^{-1}dx)$
(see Remark \ref{PContinuityRem}),
%[[!!!!!!!!!!!CHECK!!!!!!!!!!!!!]],
then the elementary inequalities
\begin{equation*}
  \textstyle
  \||x|^{-1/2}u_{0,<s}\|_{L^{2}_{|x|}L^{4}_{\theta}}
  \le
  s^{1-\frac{\widetilde{p}}{4}} 
  (\int 
    \| u_{0} (\rho \ \cdot) \|^{\widetilde{p} / 2}
    _{L^{\p}_{\theta}}  
    \ \rho \ d\rho )^{\frac{1}{2}},
\end{equation*}
\begin{equation*}
  \textstyle
  \| |x|^{-1/2} u_{0,>s}\|_{L^{2}_{x}}
  \le
  s^{1-\frac{\widetilde{p}}{2}} 
  (\int
  \| u_{0} (\rho \ \cdot) \|^{\widetilde{p} / 2}
  _{L^{\p}_{\theta}}
  \ \rho \ d\rho )^{\frac{1}{2}},
\end{equation*}
and finally property \eqref{eq:omga}.
Now we choose
\begin{equation*}
  s = \frac{2 \p - 4}{4 - \p}
\end{equation*}
and this gives, with $\theta_{1}=\theta_{1}(\widetilde{p})$
and
$\theta_{2}=\theta_{2}(\widetilde{p})$
defined as above,
\begin{equation}\label{eq:decomp2}
  \| |x|^{-1/2} w_{0}\|_{L^{2}_{|x|}L^{4}_{\theta}} 
  \le
  Z \theta_{1}\epsilon,
  \qquad
  \| |x|^{-1/2} v_{0}\|_{L^{2}_{x}}
  \le
  Z \theta_{2}\epsilon.
\end{equation}

\subsection{Decomposition of the weak solution}
Consider the Cauchy problems
\begin{equation}\label{CauchyNSForW}
\left \{
\begin{array}{rcl}
\partial_{t}w + (w \cdot \nabla) w +\nabla P_{w} -\Delta w & = & 0   \\
\nabla \cdot w & = & 0   \\
w(0) & = & w_{0} \\
P_{w} = R \otimes R \ (w \otimes w),  
\end{array}\right.
\end{equation}
and
\begin{equation}\label{CauchyNSForv}
\left \{
\begin{array}{rcl}
\partial_{t}v + (v \cdot \nabla) v + 
(v \cdot \nabla) w + (w \cdot \nabla) v
 +\nabla P_{v} -\Delta v & = & 0   \\
\nabla \cdot v & = & 0  \\
v(0) & = & v_{0} \\
P_{v} = R \otimes R \ (v \otimes v) + 2 R \otimes R \ (v \otimes w). 
\end{array}\right.
\end{equation}
Applying Theorem \ref{WeighKato} (as in \eqref{DefEpsilon4})
and Corollary \ref{Corh:SmallExistenceAlsoInL2},
and recalling
the first inequality in \eqref{eq:decomp2}, we see that
there exist two absolute constants $\varepsilon _{1}$, $C_{1}$
such that problem \eqref{CauchyNS} has 
a unique global smooth solution $w$ provided the data satisfy
\begin{equation*}
  Z \theta_{1} \epsilon < \varepsilon_{1},
\end{equation*}
and in addition the solution $w$ satisfies the estimate
\begin{equation*}
  \| w \|_{L^{8}_{t}L^{4}_{x}}  \leq  C_{1} 
  \| |x|^{-1/2} w_{0} \|_{L^{2}_{|x|}L^{4}_{\theta}} \leq 
  C_{1} Z \theta_{1} \epsilon
  \quad\implies\quad
  \| w \|_{L^{8}_{t}L^{4}_{x}} ^{8}
  \le
  C_{1}^{8}(Z \theta_{1}\epsilon)^{7}\cdot Z \theta_{1}\epsilon.
\end{equation*}
By possibly increasing $Z$ so that it is larger than
both $\varepsilon_{1}^{-1}$ and $C_{1}^{8}$,
this implies the following:
if $\epsilon$ satisfies
\begin{equation}\label{eq:secondZ}
  Z \theta_{1}\epsilon\le1
\end{equation}
then problem \eqref{CauchyNSForW} has a unique global
smooth solution $w$ such that
\begin{equation}\label{BoundOnW}
  \| w \|_{L^{8}_{t}L^{4}_{x}}^{8} \le Z \theta_{1} \epsilon.
\end{equation}
As a consequence, the function $v=u-w$ is a 
weak solution of the second Cauchy problem 
\eqref{CauchyNSForv}\footnote{Notice that $v \rightharpoonup v_{0}$ in $L^{2}$ because $u \rightharpoonup u_{0}$ in $L^{2}$ (being $u$ a suitable weak solution of (\ref{CauchyNS})) and $w \rightarrow w_{0}$ in $L^{2}$ (by Corollary (\ref{Corh:SmallExistenceAlsoInL2})).}. Moreover, since $u$ is a suitable
weak solution, the function $v$ inherits similar properties
(we shall say for short that $v$ is a suitable weak solution
of the modified problem \eqref{CauchyNSForv}).

\subsection{A change of variables}

Let $\xi \in \mathbb{R}^{3}\setminus0$, $T > 0$ and consider
the segment 
$$
L(T,\xi) := \{ (s, \xi s) \ : \ s \in (0,T) \}.
$$
We ask for which $(T,\xi)$ the set $L(T, \xi)$ is a regular set. 
To this purpose we introduce the transformation
\begin{equation}\label{TransfI}
  (t, y) = (t, x - \xi t),
  \qquad
  v_{\xi}(t,y) = v(t,x),
  \qquad
  w_{\xi}(t,y) = w(t,x),
\end{equation}
which takes \eqref{CauchyNSForW} into the system
\begin{equation}\label{CauchyNSForwxi}
\left \{
\begin{array}{rcl}
\partial_{t}w_{\xi} + 
((w_{\xi} -\xi)\cdot \nabla) w_{\xi} 
+\nabla P_{w_{\xi}} -\Delta w_{\xi} & = & 0   \\
\nabla \cdot w_{\xi} & = & 0   \\
w_{\xi}(0) & = & w_{0} \\
P_{w_{\xi}} = R \otimes R \ (w_{\xi} \otimes w_{\xi}),  
\end{array}\right.
\end{equation}
and (\ref{CauchyNSForv}) into the system
\begin{equation}\label{CauchyNSForvXi}
  \left \{
  \begin{array}{rcl}
  \partial_{t}v_{\xi} + ((v_{\xi} - \xi )\cdot \nabla) v_{\xi} + 
  (v_{\xi} \cdot \nabla) w_{\xi} + (w_{\xi} \cdot \nabla) v_{\xi}
   +\nabla P_{v_{\xi}} -\Delta v_{\xi} & = & 0   \\
  \nabla \cdot v_{\xi} & = & 0  \\
  v_{\xi}(0) & = & v_{0} \\
  P_{v_{\xi}} =  R \otimes R \ (v_{\xi} \otimes v_{\xi}) 
  + 2 R \otimes R \ (v_{\xi} \otimes w_{\xi}). 
  \end{array}\right.
\end{equation}
Note that this change of coordinates maps $L(T, \xi)$ 
in $(0,T) \times \left\{ 0 \right\} $.
Now we fix an arbitrary $M\ge1$ and we define the set
\begin{equation}\label{DefSetOfBarS}
  S(M, T, \xi) := \left\{ s \in [0,T] \ : \
  \int_{s}^{s + T/M} \int_{\RT} |y|^{-1}
  |\nabla v_{\xi} (\tau , y)|^{2} \ d\tau dy > M
  \right\}
\end{equation}
and the number $\overline{s}\ge0$
\begin{equation}\label{DefOfBarS}
\bar{s} := 
\begin{cases}
  \inf \left\{ s \in S(M, T, \xi) \right\}   
  & \mbox{if} \quad S(M, T, \xi) \neq \emptyset    \\
  T   &   \mbox{otherwise}.
\end{cases}
\end{equation}
From the definition of $\bar{s}$ one has immediately
\begin{equation}\label{BasicProperties}
  \int_{0}^{\bar{s}} \int_{\RT} |y|^{-1}
  |\nabla v_{\xi}(\tau ,y)|^{2}  \ d \tau dy  \leq M(M+1)
  \le 
  2M^{2}.
\end{equation}
We next distinguish two cases.

\subsection{First case: \texorpdfstring{$\bar{s} = T$}{}}
In this case the entire segment $L(T,\xi)$ is a regular 
set. To prove this, we note first that by
\eqref{BasicProperties}
\begin{equation}\label{FinalBound1}
  \int_{0}^{T} \int_{\mathbb{R}^{3}}
  \frac{| \nabla v
  (\tau , x)|^{2} }{|x - \xi \tau |} \ d \tau dx 
  < + \infty.
\end{equation}
Suppose we can also prove that
\begin{equation}\label{FinalBound2}
  \int_{0}^{T} \int_{\mathbb{R}^{3}}
  \frac{| \nabla w (\tau ,x)|^{2} }{|x - \xi \tau |} 
  \ d \tau dx  < + \infty
\end{equation}
Then summing the two we obtain
\begin{equation}
  \int_{0}^{T} \int_{\mathbb{R}^{3}}
  \frac{| \nabla u (\tau ,x)|^{2} }{|x - \xi \tau |} 
  \ d\tau dx  < + \infty.
\end{equation}
Now let $0 < s < T $, and let $r > 0$ be so small 
that $0 < s - 7r^{2}/8  < s + r^{2}/8  < T$
and $|\xi| r \leq 1$. For each $(\tau , x) \in Q^{*}_{r}(s, \xi s)$
we have 
\begin{equation*}
  |x - \xi \tau | \le
  |x-\xi s|+|\xi||s-\tau|\le
  r + r^{2} |\xi| \leq 2r
\end{equation*}
which implies
\begin{equation*}
  \frac{1}{r} \int \int_{Q^{*}_{r}(s, \xi s)} 
  |\nabla u(\tau , x)|^{2} \  d \tau d x 
  \le
  2 \int_{s - \frac{7}{8}r^{2}}^{s 
  + \frac{1}{8}r^{2}} \int_{\mathbb{R}^{3}}
  \frac{|\nabla u(\tau, x)|^{2}}{|x - \xi \tau|} \  d \tau d x .  
\end{equation*}
By continuity of the integral function, we obtain that
the regularity condition (\ref{CKNConditionSB}) is satisfied
at all $(s,\xi s)\in L(T,\xi)$, {\it i.e.}
$L(T,\xi)$ is a regular set as claimed.

It remains to prove \eqref{FinalBound2}.
By \eqref{BoundOnW}, \eqref{eq:decomp2}
we know that
\begin{equation}\label{HPonW}
  \|  w_{\xi}  \|_{L^{8}_{t}L^{4}_{x}}=
  \|  w \|_{L^{8}_{t}L^{4}_{x}} < + \infty, 
  \qquad  \| |x|^{- 1/2} w_{0} \|_{L^{2}(\mathbb{R}^{3})} 
  < + \infty
\end{equation}
and that $w$, hence $w_{\xi}$, is a smooth solution.
Thus we can write the energy inequality
\begin{equation}\label{eq:energyw}
  \begin{split}
    \textstyle
    \int_{\mathbb{R}^{3}} \phi 
    &
    \textstyle
    |w_{\xi}|^{2} dx + 
    2 \int_{0}^{t}\int_{\mathbb{R}^{3}} \phi |\nabla  w_{\xi}|^{2} 
    \le
    \\
    & \le
    \textstyle
    \int_{\mathbb{R}^{3}} \phi |w_{0}|^{2}dx + 
    \int_{0}^{t} \int_{\mathbb{R}^{3}} 
    |w_{\xi}|^{2} (\phi_{t} - \xi \cdot \nabla \phi + \Delta \phi)
    \int_{0}^{t} \int_{\mathbb{R}^{3}} (|w_{\xi}|^{2} + 
    2P_{w_{\xi}})w_{\xi} \cdot \nabla \phi
  \end{split}
\end{equation}
where $P_{w_{\xi}} = R \otimes R \ (w_{\xi} \otimes w_{\xi})$
and $\phi\in C^{\infty}_{c}(\mathbb{R}^{3})$ is any test function
$\phi\ge0$. We choose
\begin{equation*}
  \textstyle
  \phi(y) := \sigma_{\nu} (y) \chi(\delta |y|),
  \qquad \sigma_{\nu}(y) := (\nu+|y|^{2})^{-\frac12}
  \qquad
  \nu , \delta > 0
\end{equation*}
where $\chi$ is a cut-off function supported in $[-1,1]$ and equal
to 1 near 0 (compare with the proof of Lemma 8.2 in \cite{CKN}).
Letting $\delta \rightarrow 0$
and using the inequalities
\begin{equation}\label{eq:sigmapr}
  \textstyle
 |\nabla \sigma_{\nu}| \leq (\nu + |y|^{2})^{-1} 
 = \sigma_{\nu}^{2},
 \qquad \Delta \sigma_{\nu} < 0,
\end{equation}
we obtain
\begin{equation*}
  \textstyle
  \left[ \int_{\mathbb{R}^{3}} \sigma_{\nu} 
  |w_{\xi}|^{2} \right]_{0}^{t} + 
  2\int_{0}^{t}\int_{\mathbb{R}^{3}} 
  \sigma_{\nu} |\nabla w_{\xi}|^{2} 
  \le
  |\xi|\int_{0}^{t}\int_{\mathbb{R}^{3}}  
  \sigma_{\nu}^{2} |w_{\xi}|^{2}  
   +  \int_{0}^{t}\int_{\mathbb{R}^{3}} \sigma_{\nu}^{2}
   (|w_{\xi}|^{3} + 2|P_{w_{\xi}}||w_{\xi}| ).
\end{equation*}
Our goal is to prove an integral inequality
for the quanities
\begin{equation}\nonumber
a_{\nu}(t) = \int_{\mathbb{R}^{3}} \sigma_{\nu} |w_{\xi}|^{2}(t), 
\qquad
B_{\nu}(t) = \int_{0}^{t}\int_{\mathbb{R}^{3}} 
\sigma_{\nu} | \nabla w_{\xi}|^{2} .
\end{equation}
To proceed, we use the weighted $L^{p}$ inequality
for the Riesz transform (\cite{Stein}), uniform in
$\nu\ge0$
\begin{equation}\label{SteinIneq}
  \|\sigma_{\nu}^{m} R \phi \|_{L^{s}} 
  \leq Z \|\sigma_{\nu}^{m}\phi\|_{L^{s}}, 
  \qquad
  \textstyle
  1 < s < \infty,
  \qquad
  m \in 
    \left( - \frac{3(s-1)}{s}, \frac{3}{s} \right).
\end{equation}
For the pressure term we have, using 
\eqref{CKNInequalityCKNVersion} and \eqref{SteinIneq},
\begin{eqnarray}\nonumber
  \textstyle
  2 \int_{\mathbb{R}^{3}} \sigma_{\nu}^{2}
  |P_{w_{\xi}}||w_{\xi}| 
  & = & 
  \textstyle
  2 \int_{\mathbb{R}^{3}} \sigma_{\nu}^{2}
  |w_{\xi}| | R \otimes R \  
  ( w_{\xi} \otimes w_{\xi} ) |
  \\ \nonumber
  & \leq  & 
  \| \sigma_{\nu}
  R \otimes R \  
  ( w_{\xi} \otimes w_{\xi} )   \|_{L^{8/5}}
  \| \sigma_{\nu}
  w_{\xi} \|_{L^{8/3}}
  \\ \nonumber
  & \lesssim &
  \| \sigma_{\nu}
  | w_{\xi} |^{2}   \|_{L^{8/5}}
  \| \sigma_{\nu}
  w_{\xi} \|_{L^{8/3}}
  \\ \nonumber
  & \leq &
  \|  w_{\xi}    \|_{L^{4}}
  \| \sigma_{\nu}
  w_{\xi} \|_{L^{8/3}}^{2}
  \\ \nonumber
  & \lesssim &
  \|  w_{\xi}    \|_{L^{4}}
  \| \sigma_{\nu}^{1/2}
  \nabla w_{\xi} \|_{L^{2}}^{7/4}
  \| \sigma_{\nu}^{1/2}
  w_{\xi} \|_{L^{2}}^{1/4}
  \\ \label{Appendx:FirstIn}
  & = &
  \|  w_{\xi}    \|_{L^{4}}
  \dot{B}^{7/4}_{\nu}a_{\nu}^{1/4}
  \le
  \textstyle
  \frac{\dot{B}_{\nu}}{6}  
   +  C \| w_{\xi} \|_{L^{4}}^{8} \cdot a_{\nu}
\end{eqnarray} 
for some universal constant $C$.
In a similar way,
\begin{eqnarray}
  \textstyle
  \int_{\mathbb{R}^{3}} \sigma_{\nu}^{2}
  |w_{\xi}|^{3}
  & \leq &
  \| w_{\xi} \|_{L^{4}}
  \| \sigma_{\nu}^{2}  |w_{\xi}|^{2} \|_{L^{4/3}}
  \\ \nonumber
  &  =  &
  \| w_{\xi} \|_{L^{4}}
  \| \sigma_{\nu}  w_{\xi}  \|_{L^{8/3}}^{2}
  \\ \nonumber
  & \lesssim &
  \|  w_{\xi}    \|_{L^{4}}
  \| \sigma_{\nu}^{1/2}
  \nabla w_{\xi} \|_{L^{2}}^{7/4}
  \| \sigma_{\nu}^{1/2}
  w_{\xi} \|_{L^{2}}^{1/4}
  \\ \label{Appendx:SecondIn}
  &  \le  &
  \textstyle
  \frac{\dot{B}_{\nu}}{6}  
  +  C \| w_{\xi} \|_{L^{4}}^{8}  a_{\nu}
\end{eqnarray}
and
\begin{equation}\label{Appendx:ThirdIn}
  \textstyle
  |\xi| \int_{\RT} \sigma_{\nu}^{2} |w_{\xi}|^{2} 
  \lesssim 
  |\xi|\cdot  \| \sigma_{\nu}^{1/2} 
  \nabla w_{\xi}  \|_{L^{2}}
  \| \sigma_{\nu}^{1/2} w_{\xi} \|_{L^{2}} 
  =
  |\xi|   (\dot{B}_{\nu} a_{\nu})^{1/2} 
  \leq \frac{\dot{B}_{\nu}}{6} + C |\xi|^{2} a_{\nu} .
\end{equation}
Plugging these inequalities in the energy estimate
we get
\begin{equation}\label{Appendix:FinalINeq}
  a_{\nu}(t) + B_{\nu}(t) \leq 
  a_{\nu}(0) +
  \int_{0}^{t}  \left( 
  C |\xi|^{2} + 
  % 2 C\dot{B}_{\nu}(s) +  
  3 C \|  w_{\xi}(s,\cdot)  \|^{8}_{L^{4}}
  \right) a(s) \ ds,
\end{equation}
and passing to the limit $\nu \rightarrow 0$
we obtain, for some larger universal constant $C$
(note that $\|w_{\xi} (t) \|_{L^{4}} = \| w (t) \|_{L^{4}}$ 
for all $t$)
\begin{equation*}
  a(t) + B(t) \leq 
  a(0)  +
  C\int_{0}^{t}  \left( 
  |\xi|^{2} + 
   \|  w (s,\cdot) \|^{8}_{L^{4}}
  \right) a(s) \ ds,
\end{equation*}
where
\begin{equation*}
  a(t) = \int_{\mathbb{R}^{3}} |y|^{-1} |w_{\xi}|^{2}(t) , 
  \qquad
  B(t) = \int_{0}^{t} \int_{\mathbb{R}^{3}} |y|^{-1} 
  |\nabla w_{\xi}|^{2}.
\end{equation*}
By a standard application of Gronwall's inequality,
we obtain $a(t) < + \infty$ 
for all $t \geq 0$ which implies also
$B(t) < + \infty$ for all $t \geq 0$ as claimed.

\subsection{Second case: \texorpdfstring{$0 \leq \bar{s} < T$}{} }
% Our first goal is to bound the quantity 
% \begin{equation*}
%   \int_{\RT} |y|^{-1} |v_{\xi}|^{2}(\bar{s},y) \ dy.
% \end{equation*}
Since $v_{\xi}$ is a suitable weak solution of 
Problem \eqref{CauchyNSForvXi}, the following
generalized energy inequality is valid
(see e.g.~\cite{Calderon} for details):
%[[CHECK!-vorrei controllare i dettagli]]
for all $t\ge0$ and
$\phi\in C^{\infty}_{c}(\mathbb{R^{+}}\times \mathbb{R}^{3})$,
we have
\begin{equation*}
\begin{split}
  \textstyle
  \int_{\mathbb{R}^{3}} \phi(t,x) 
  |v_{\xi}|^{2} dx
  &+ 
  \textstyle
  2 \int_{0}^{t}\int_{\mathbb{R}^{3}} 
  \phi |\nabla  v_{\xi}|^{2} 
  \le 
  \\
  \le&
  \textstyle
  \int_{\mathbb{R}^{3}} \phi(0,x) |v_{0}|^{2}dx + 
  \int_{0}^{t} \int_{\mathbb{R}^{3}} 
  |v_{\xi}|^{2} (\phi_{t} - \xi \cdot \nabla \phi + \Delta \phi)
  +
  \\
  &+
  \textstyle
  \int_{0}^{t} \int_{\mathbb{R}^{3}} (|v_{\xi}|^{2} 
  + 2P_{v_{\xi}})v_{\xi} \cdot \nabla \phi
  +
  \int_{0}^{t}\int_{\mathbb{R}^{3}} 
  |v_{\xi}|^{2} (w_{\xi} \cdot \nabla \phi) 
  \\
  &+
  \textstyle
  2 \int_{0}^{t}\int_{\mathbb{R}^{3}} 
  (v_{\xi} \cdot w_{\xi}) (v_{\xi} \cdot \nabla \phi)  
  + \phi (v_{\xi} \cdot \nabla ) v_{\xi} \cdot w_{\xi}  
\end{split}
\end{equation*}
which implies
\begin{equation}\label{GenEn} 
  \begin{split}
    \textstyle
    \int_{\mathbb{R}^{3}} \phi(t,x) 
    &|v_{\xi}|^{2} dx
    + 
    \textstyle
    2 \int_{0}^{t}\int_{\mathbb{R}^{3}} 
    \phi |\nabla  v_{\xi}|^{2} 
    \le 
    \\
    \le&
    \textstyle
    \int_{\mathbb{R}^{3}} \phi(0,x) |v_{0}|^{2} dx+ 
    \int_{0}^{t} \int_{\mathbb{R}^{3}} 
    |v_{\xi}|^{2} (\phi_{t} - \xi \cdot \nabla \phi + \Delta \phi)
    +
    \\
    &+
    \textstyle
    \int_{0}^{t} \int_{\mathbb{R}^{3}} (|v_{\xi}|^{2} 
    + 2P_{v_{\xi}})v_{\xi} \cdot \nabla \phi
    +
    \int_{0}^{t}\int_{\mathbb{R}^{3}} 3 |v_{\xi}|^{2} 
    |w_{\xi}| |\nabla \phi|
    + 18 | \phi | |v_{\xi}| |\nabla v_{\xi}| |w_{\xi}|.
  \end{split}
\end{equation}
By a standard approximation procedure
(see the proof of Lemma 8.3 in \cite{CKN}) 
the estimate is valid for any test function of the form
\begin{equation*}
  \phi(t,y) := \psi(t) \phi_{1}(y)
\end{equation*}
with
$
\phi_{1} \in C^{\infty}_{c}(\mathbb{R}^{3})$, 
$\phi_{1} \geq 0,$
and 
\begin{equation*}
  \psi : \Rpiu \rightarrow \mathbb{R}
  \quad \mbox{absolutely continuous with} 
  \quad  \dot{\psi} \in L^{1}(\Rpiu).
\end{equation*}
We shall choose here
\begin{equation*}
  \psi(t)\equiv1,
  \qquad
  \phi_{1} =\sigma_{\nu}(y) \chi(\delta |y|), 
\end{equation*}
where $\nu,\delta>0$,
\begin{equation*}
  \sigma_{\nu}(y) = (\nu + |y|^{2})^{-\frac12},
\end{equation*}
and $\chi:\mathbb{R}^{+}\to \mathbb{R}^{+}$
is a smooth nonincreasing function such that
\begin{equation*}
  \chi =1 \ \text{on}\  [0,1],
  \qquad
  \chi =0  \ \text{on}\ [2, +\infty].
\end{equation*} 
Passing to the limit $\delta \rightarrow 0$ in the 
energy inequality we obtain
\begin{equation}\label{PassToTheLim}
\begin{split}
  \textstyle
  \left[ \int_{\mathbb{R}^{3}} \sigma_{\nu} 
    |v_{\xi}|^{2} \right]_{0}^{t} 
  &+
  \textstyle
  2\int_{0}^{t}\int_{\mathbb{R}^{3}} \sigma_{\nu} 
    |\nabla v_{\xi}|^{2}\le
  \\
  \le &
  \textstyle
  \int_{0}^{t}\int_{\mathbb{R}^{3}}  
  |v_{\xi}|^{2}(- \xi \cdot \nabla \sigma_{\nu} 
  + \Delta \sigma_{\nu}) 
  +
  \textstyle
  \int_{0}^{t}\int_{\mathbb{R}^{3}} (|v_{\xi}|^{2} 
  + 2P_{v_{\xi}})v_{\xi}\cdot \nabla \sigma_{\nu}
  \\
  &+
  \textstyle
  18 \int_{0}^{t}\int_{\mathbb{R}^{3}} 
  \sigma_{\nu}  |v_{\xi}| |\nabla v_{\xi}| |w_{\xi}| 
  + 
  3 \int_{0}^{t}\int_{\mathbb{R}^{3}} 
  |v_{\xi}|^{2} | w_{\xi} | | \nabla \sigma_{\nu} |.
\end{split}
\end{equation}
Note that a similar argument is used in \cite{CKN},
one of the differences here being the presence of the last two 
perturbative terms, which
we control using \eqref{BoundOnW}. 
Recalling \eqref{eq:sigmapr}, we deduce the estimate
\begin{equation}\label{PutIn}
\begin{split}
  \textstyle
  \left[ \int_{\mathbb{R}^{3}} \sigma_{\nu} 
    |v_{\xi}|^{2} \right]_{0}^{t} 
  &+
  \textstyle
  2\int_{0}^{t}\int_{\mathbb{R}^{3}} \sigma_{\nu} 
    |\nabla v_{\xi}|^{2}\le
  \\
  \le
  |\xi|
  &
  \textstyle
  \int_{0}^{t}\int_{\mathbb{R}^{3}} 
    \sigma_{\nu}^{2} |v_{\xi}|^{2}
  +\int_{0}^{t}\int_{\mathbb{R}^{3}} \sigma_{\nu}^{2}
    (|v_{\xi}|^{3} + 2|P_{v_{\xi}}||v_{\xi}|
    +3 |v_{\xi}|^{2}|w_{\xi}|) 
    + 18 \sigma_{\nu} |v_{\xi}| | \nabla v_{\xi} | 
    | w_{\xi} |.
\end{split}
\end{equation}
We can now proceed as in the first case, using
\eqref{PutIn} to obtain a Gronwall type
inequality for the quantities
\begin{equation}\nonumber
  a_{\nu}(t) = 
  % \int_{\mathbb{R}^{3}} 
  % \sigma_{\epsilon} |v_{\xi}|^{2}(t)
  % \equiv
  \|\sigma_{\nu}^{1/2}v_{\xi}\|_{L^{2}}^{2},
  \qquad
  B_{\nu}(t) = 
  % \int_{0}^{t}\int_{\mathbb{R}^{3}} 
  % \sigma_{\epsilon} | \nabla v_{\xi}|^{2}
  % \equiv
  \int_{0}^{t}\|\sigma_{\nu}^{1/2}
    \nabla v_{\xi}(s)\|_{L^{2}}^{2}ds.
\end{equation}
We first estimate the term in $P_{v_{\xi}}$; recall that
\begin{equation*}
  P_{v_{\xi}} = R \otimes R \ (v_{\xi} \otimes v_{\xi}) 
  + 2 R \otimes R \ (v_{\xi} \otimes w_{\xi}).
\end{equation*}
We have
\begin{equation*}
  \textstyle
  2\int_{\RT} \sigma_{\nu}^{2} |P_{v_{\xi}}| |v_{\xi}|
  \leq 
  2\int_{\RT} \sigma_{\nu}^{2} |v_{\xi}| 
  |R \otimes R \ (v_{\xi} \otimes v_{\xi})| 
  + 4 \int_{\RT} \sigma_{\nu}^{2} |v_{\xi}| 
  |R \otimes R \ (v_{\xi} \otimes w)|
   =: I + II.
\end{equation*}
Here and in the following, as usual, $Z$ denotes several
universal constats, possibly different from line to line.
By \eqref{SteinIneq} we can write
\begin{equation*}
  I\le
  2 \|\sigma_{\nu} 
  R \otimes R  \ (v_{\xi} \otimes v_{\xi})\|_{L^{2}}
  \le
  Z
  \|\sigma_{\nu} |v_{\xi}|^{2} \|_{L^{2}}
  \| \sigma_{\nu}  v_{\xi}  \|_{L^{2}} 
  \le
  Z
  \|\sigma_{\nu}^{1/2} v_{\xi} \|_{L^{4}}^{2}
  \| \sigma_{\nu}  v_{\xi}  \|_{L^{2}} 
\end{equation*}
and then by the Caffarelli--Kohn--Nirenberg inequality we obtain
\begin{equation}\label{FirstIn}
  I\le
  Z
  \|\sigma_{\nu}^{1/2} \nabla v_{\xi}  \|^{3/2}_{L^{2}}
  \| \sigma_{\nu}^{1/2}  v_{\xi}  \|_{L^{2}}^{1/2} 
  \cdot
  \|\sigma_{\nu}^{1/2} \nabla v_{\xi}  \|^{1/2}_{L^{2}}
  \| \sigma_{\nu}^{1/2}  v_{\xi}  \|_{L^{2}}^{1/2} 
  =
  Z
  \dot{B}_{\nu} a_{\nu}^{1/2} 
  \le 
  \frac{\dot{B}_{\nu}}{6}  
  + Z \dot{B}_{\nu}a_{\nu}.
\end{equation}
In a similar way we have
\begin{equation*}
  II\le
  4 \| \sigma_{\nu} 
  R \otimes R  \ (v_{\xi} \otimes w) \|_{L^{\frac85}}
  \| \sigma_{\nu}  v_{\xi} \|_{L^{\frac83}} 
  \le
  Z
  \| \sigma_{\nu} |v_{\xi}| |w| \|_{L^{\frac85}}
  \| \sigma_{\nu}  v_{\xi}  \|_{L^{\frac83}}
  \le
  Z
   \|  w \|_{L^{4}}
  \| \sigma_{\nu}  v_{\xi}  \|_{L^{\frac83}}^{2}
\end{equation*}
and again by the CKN inequality
\begin{equation}\label{SecIn}
  II
  \le
  Z
  \|  w \|_{L^{4}}
  \| \sigma_{\nu}^{1/2}  v_{\xi}  \|^{1/4}_{L^{2}} 
  \| \sigma_{\nu}^{1/2}  \nabla v_{\xi}  \|^{7/4}_{L^{2}},
  =
  Z
  \|  w \|_{L^{4}}
  a_{\nu}^{\frac18}\dot{B_{\nu}}^{\frac78}
  \le
  \frac{\dot{B}_{\nu}}{6} 
  + Z \| w \|^{8}_{L^{4}} a_{\nu}. 
\end{equation}
Consider now the other terms in \eqref{PutIn}.
Proceeding as above, we have
\begin{equation}\label{ThirdIn}
  |\xi|
  \int_{\RT} \sigma_{\nu}^{2} |v_{\xi}|^{2} 
  % = 
  % |\xi|
  % \| \sigma_{\epsilon} v_{\xi} \|_{L^{2}}^{2} 
  \le
  Z
  |\xi|
  \| \sigma_{\nu}^{1/2} \nabla v_{\xi}  \|_{L^{2}}
  \| \sigma_{\nu}^{1/2} v_{\xi} \|_{L^{2}} =
  Z|\xi|(\dot{B}_{\nu}a_{\nu})^{1/2}
  \le
  \frac{\dot{B}_{\nu}}{6} + Z |\xi|^{2} a_{\nu};
\end{equation}
and
\begin{equation}\label{FourthIn}
  \int_{\RT} \sigma_{\nu}^{2} |v_{\xi}|^{3} 
  = \| \sigma_{\nu}^{2/3} v_{\xi} \|_{L^{3}}^{3} 
  \le
  Z
  \| \sigma_{\nu}^{1/2} \nabla v_{\xi}  \|^{2}_{L^{2}}
  \| \sigma_{\nu}^{1/2} v_{\xi} \|_{L^{2}}
  =
  Z
  \dot{B}_{\nu} a_{\nu}^{1/2} 
  \leq 
  \frac{\dot{B}_{\nu}}{6}  + Z \dot{B}_{\nu}a_{\nu}
\end{equation}
while for the perturbative terms we can write
\begin{equation}\label{SixthIn}
\begin{split}
  3 \int_{\RT}\sigma_{\nu}^{2} |v_{\xi}|^{2} |w_{\xi}|
  \le&
  3 \|  w_{\xi} \|_{L^{4}}
  \| \sigma_{\nu}  v_{\xi}  \|_{L^{8/3}}^{2}
  \le
  Z
  \|  w_{\xi} \|_{L^{4}}
  \| \sigma_{\nu}^{1/2}  v_{\xi}  \|^{1/4}_{L^{2}} 
  \| \sigma_{\nu}^{1/2}  \nabla v_{\xi}  \|^{7/4}_{L^{2}}
  \\
  =&
  Z\|  w_{\xi} \|_{L^{4}}
  a_{\nu}^{1/8}
  \dot{B}_{\nu}^{7/8}
  \le
   \frac{\dot{B}_{\nu}}{6}  
  + Z \|  w_{\xi} \|^{8}_{L^{4}} a_{\nu}
\end{split}
\end{equation}
and
\begin{equation}\label{SevIn}
\begin{split}
  18 \int_{\RT}\sigma_{\nu} |v_{\xi}| |\nabla v_{\xi}| 
  |w_{\xi}| 
  \le&
  18  \| \sigma_{\nu}^{1/2}  \nabla v_{\xi}  \|_{L^{2}}
  \| w_{\xi} \|_{L^{4}} 
  \| \sigma_{\nu}^{1/2}  v_{\xi}  \|_{L^{4}}
  \\
  \le&
  Z
  \| \sigma_{\nu}^{1/2}  \nabla v_{\xi}  \|_{L^{2}}
  \| w_{\xi} \|_{L^{4}}
  \| \sigma_{\nu}^{1/2}  \nabla v_{\xi}  \|^{3/4}_{L^{2}}
  \| \sigma_{\nu}^{1/2}  v_{\xi}  \|^{1/4}_{L^{2}}
  \\
  =&
  Z\| w_{\xi} \|_{L^{4}}
  \dot{B}_{\nu}^{7/8}a_{\nu}^{1/8}
  \le
   \frac{\dot{B}_{\nu}}{6}  
  + Z \|  w_{\xi} \|^{8}_{L^{4}} a_{\nu}.
\end{split}
\end{equation}
Now recalling \eqref{PutIn},
summing all the inequalities and absorbing
a term 
$  \int_{0}^{t}\dot{B}_{\nu}(s)ds\equiv 
  B_{\nu}(t)$ from the left hand side, we obtain
\begin{equation*}
  a_{\nu}(t) + B_{\nu}(t) \leq 
  a_{\nu}(0) +
  Z\int_{0}^{t}  \left( 
  |\xi|^{2} + 
  \dot{B}_{\nu}(s) +  
  \|  w_{\xi} (s,\cdot) \|^{8}_{L^{4}}
  \right) a(s) \ ds,
\end{equation*}
and passing to the limit $\nu \rightarrow 0$,
we arrive at the estimate
\begin{equation*}
  a(t) + B(t) \leq 
  a(0)  +
  Z\int_{0}^{t}  \left( 
  |\xi|^{2} + 
  \dot{B}(s) + 
  \|  w_{\xi}(s,\cdot)  \|^{8}_{L^{4}}
  \right) a(s) \ ds,
\end{equation*}
for some universal constant $Z$, where
\begin{equation*}
  % \sigma (y) = |y|^{-1}, \qquad
  a(t) = \int_{\mathbb{R}^{3}} |y|^{-1} 
    |v_{\xi}(t,y)|^{2}dy, 
  \qquad
  B(t) = \int_{0}^{t} \int_{\mathbb{R}^{3}} 
  |y|^{-1} |\nabla v_{\xi}(s,y)|^{2}dsdy.
\end{equation*}
By a standard application of Gronwall's lemma we get
for $0\le t\le \bar{s}$
\begin{equation*}
  a(t)\le a(0)(1+ZA e^{ZA}),
  \qquad
  A=B(\bar{s})+\|w\|_{L^{8}_{t}L^{4}_{x}}^{8}+\bar{s}|\xi|^{2}.
\end{equation*}
By \eqref{BasicProperties}, \eqref{BoundOnW} we have
$A\le 2M^{2}+Z+\bar{s}|\xi|^{2}$, while by
\eqref{eq:decomp2} we have $a(0)\le (Z \theta_{2}\epsilon)^{2}$
(note that $w_{\xi}$, $v_{\xi}$ at fixed $t$ are just translations
of $w,v$ respectively).
% \begin{equation}\label{BoundBySmallness}
% a(\bar{s}) \leq e^{ C  ( \theta^{2} \bar{s} 
% + 2M^{2} ) + 
% 3 C ( C_{1} A p_{1} \varepsilon  )^{8} }  a(0)
% \leq e^{ C ( \theta^{2} \bar{s} 
% + 2M^{2} ) + 
% 3 C ( C_{1} A p_{1} \varepsilon )^{8} } ( B p_{2} \varepsilon)^{2}.
% \end{equation}
If we restrict to the 
vectors $\xi$ such that\footnote{Remember 
that $\bar{s}$ is a function of $\xi$.} 
\begin{equation}\label{ResrtCondFirst}
|\xi|^{2}\bar{s} \leq M^{2}
\end{equation} 
the estimate becomes
\begin{equation*}
  a(\bar{s})\le(Z\theta_{2}\epsilon)^{2}
    (1+(3M^{2}+Z)e^{3M^{2}+Z})
\end{equation*}
and taking a possibly larger universal constant $Z$, this implies
\begin{equation}\label{BoundBySmallnessBis}
  a(\bar{s})\le Ze^{4M^{2}}(\theta_{2}\epsilon)^{2}.
\end{equation}
Notice that (\ref{ResrtCondFirst}) is satisfied 
provided that
\begin{equation}\label{RestrFirstOnL}
L(T , \xi) \subset \left\{ (\tau , z): \tau \geq \frac{|z|^{2}}{M^{2}} \right\}.
\end{equation}
%Now we want to show that $(t, \xi t)$ is a 
%regular point provided that it is in the 
%interior of the parabola $\Pi_{\p}(M)$. 

We now repeat the argument, starting from the point
$(\bar{s},\bar{s}\xi)$. We write the analogous of the energy
inequality (\ref{GenEn}) on
the time interval 
$\bar{s}\le s\le t$ with $t\le \bar{s}+ T$,
choosing as test function 
$\phi(t,y):=\psi_{\nu}(t)\sigma_{\nu}(y) \chi(\delta |y|)$
where $\chi$ and $\sigma_{\nu}$ are as before, while
\begin{equation*}
  \psi_{\nu}(t) := e^{-k B_{\bar{s}, \nu}(t)},
  \qquad 
  B_{\bar{s}, \nu}(t) := 
  \int_{\bar{s}}^{t}
  \int_{\mathbb{R}^{3}} 
  \sigma_{\nu} |\nabla v_{\xi}|^{2}
\end{equation*}
with $k$ a positive constant to be specified. 
Note that $B_{\bar{s}, \nu}$ is bounded if
$\nu>0$ by the properties of $v$.
In this way we obtain, letting $\delta\to0$,
\begin{equation*}
\begin{split}
  \textstyle
  [ \int_{\mathbb{R}^{3}} 
  &
  \psi_{\nu} \sigma_{\nu} 
  |v_{\xi}|^{2} ]_{\bar{s}}^{ t}
   + 
  \textstyle
  2\int_{\bar{s}}^{t}\int_{\mathbb{R}^{3}} 
  \psi_{\nu} \sigma_{\nu} |\nabla v_{\xi}|^{2} 
  \le
  \\
  \le &
  \textstyle
  \int_{\bar{s}}^{t}\int_{\mathbb{R}^{3}} 
  \psi_{\nu} |v_{\xi}|^{2}
  (-k \dot{B}_{\bar{s} , \nu} \sigma_{\nu} 
  -\xi \cdot \nabla \sigma_{\nu} + \Delta \sigma_{\nu}) 
  +
  \int_{\bar{s}}^{ t}\int_{\mathbb{R}^{3}} 
  \psi_{\nu} (|v_{\xi}|^{2} 
  +2P_{v_{\xi}}v_{\xi})\cdot \nabla \sigma_{\nu} 
  \\
  +&
  \textstyle
  18 \int_{\bar{s}}^{ t}\int_{\mathbb{R}^{3}} 
  \psi_{\nu} \sigma_{\nu}  |v_{\xi}| |\nabla v_{\xi}| 
  |w_{\xi}| 
  +
  3 \int_{\bar{s}}^{t}\int_{\mathbb{R}^{3}} 
  \psi_{\nu} |v_{\xi}|^{2} | w_{\xi} | 
  | \nabla \sigma_{\nu} |,
\end{split}
\end{equation*}
for $\bar{s}\le t\le \bar{s}+T$,
and this implies, recalling \eqref{eq:sigmapr},
\begin{equation}\label{PutIn2}
\begin{split}
  \textstyle
  [ \int_{\mathbb{R}^{3}} 
  \psi_{\nu} \sigma_{\nu} 
  &
  \textstyle
  |v_{\xi}|^{2} 
  ]_{\bar{s}}^{ t} 
  + 2\int_{\bar{s}}^{ t}\int_{\mathbb{R}^{3}} 
  \psi_{\nu} \sigma_{\nu} |\nabla v_{\xi}|^{2} 
  \le
  \\
  \le &
  \textstyle
  \int_{\bar{s}}^{ t}
  \int_{\mathbb{R}^{3}} \psi_{\nu} 
  |v_{\xi}|^{2}(|\xi| \sigma_{\nu}^{2} 
  - k \dot{B}_{\bar{s} , \nu} \sigma_{\nu})
  \\
  &+
  \textstyle
  \int_{\bar{s}}^{ t}
  \psi_{\nu}
  \int_{\mathbb{R}^{3}} 
  \sigma_{\nu}^{2}
  (|v_{\xi}|^{3} + 2|P_{v_{\xi}}||v_{\xi}|
  +3 |v_{\xi}|^{2}|w_{\xi}|) 
  + 18 \sigma_{\nu} |v_{\xi}| | \nabla v_{\xi} | | w_{\xi} |. 
\end{split}
\end{equation}
Our goal now is to prove an integral inequality 
involving the quantities 
\begin{equation*}
  a_{\nu}( t) = 
  \int_{\mathbb{R}^{3}} \sigma_{\nu} 
  |v_{\xi} ( t,x)|^{2} dx,
  \qquad
  B_{\bar{s}, \nu}(t) = 
  \int_{\bar{s}}^{t} \int_{\mathbb{R}^{3}} 
  \sigma_{\nu} |\nabla v_{\xi}|^{2}.
\end{equation*}
We estimate the terms at the right hand side
of \eqref{PutIn2}. First of all we have
\begin{equation*}
  \textstyle
  2\int_{\RT} \sigma_{\nu}^{2} |P_{v_{\xi}}| |v_{\xi}|
  \leq 
  2\int_{\RT} \sigma_{\nu}^{2} 
  |v_{\xi}| |R \otimes R \ (v_{\xi} \otimes v_{\xi})| 
  +
  4 \int_{\RT} \sigma_{\nu}^{2} 
  |v_{\xi}| |R \otimes R  \ (v_{\xi} \otimes w)| 
   =:  I + II.
\end{equation*}
With computations similar to those of the first step, using
the boundedness of the Riesz transform and the CKN
inequality, we obtain
\begin{equation}\label{FirstInBis}
  I \leq 
  \frac{\dot{B}_{\bar{s},\nu}}{8} 
  + Z \dot{B}_{\bar{s},\nu} a_{\nu},
\end{equation}
and, by possibly increasing the value of $Z$ at each step,
\begin{equation}\label{SecInBis}
  II\le
  Z\|   w  \|_{L^{4}}
  \| \sigma_{\nu}^{1/2}  v_{\xi}  \|^{1/4}_{L^{2}} 
  \| \sigma_{\nu}^{1/2}  \nabla  v_{\xi}  \|^{7/4}_{L^{2}}
  =
  Z
  \|   w  \|_{L^{4}}
  a_{\nu}^{1/8}\dot{B}_{\bar{s},\nu}^{7/8}
  \le
  \frac{\dot{B}_{\bar{s},\nu}}{8} 
  +
  \|  w  \|^{8}_{L^{4}}
  + Za_{\nu} \dot{B}_{\bar{s},\nu}.
\end{equation}
Next we have
\begin{equation}\label{ThirdInBis}
  |\xi|\int_{\RT} \sigma_{\nu}^{2} |v_{\xi}|^{2} 
  = 
  |\xi| \| \sigma_{\nu}  v_{\xi}   \|_{L^{2}}^{2}
  \le
  Z
  |\xi|  \| \sigma_{\nu}^{1/2}   \nabla v_{\xi}   \|_{L^{2}}
  =
  Z
  |\xi|
  (\dot{B}_{\bar{s},\nu} a_{\nu})^{1/2} 
  \le |\xi|^{2} + 
  Z
  \dot{B}_{\bar{s},\nu} a_{\nu};
\end{equation}
and
\begin{equation}\label{FourthInBis}
  \int_{\RT} \sigma_{\nu}^{2} |v_{\xi}|^{3} 
  = 
  \| \sigma_{\nu}^{2/3} v_{\xi} \|_{L^{3}}^{3}
  \le
  Z
  \| \sigma_{\nu}^{1/2}  \nabla v_{\xi}  \|^{2}_{L^{2}}
  \| \sigma_{\nu}^{1/2}  v_{\xi}  \|_{L^{2}}
  =
  Z
  \dot{B}_{\bar{s},\nu} a_{\nu}^{1/2} 
  \le
  \frac{\dot{B}_{\bar{s},\nu}}{8}  
  +Z
  \dot{B}_{\bar{s},\nu}a_{\nu}.
\end{equation}
Finally, for the perturbative terms we have
\begin{equation}\label{SixthInBis}
\begin{split}
  \textstyle
  3 \int_{\RT}\sigma_{\nu}^{2} |v_{\xi}|^{2} |w_{\xi}| 
  \le&
  3 \|   w_{\xi}  \|_{L^{4}}
  \| \sigma_{\nu}  v_{\xi}  \|_{L^{8/3}}^{2}
  \le
  Z
  \|  w_{\xi}  \|_{L^{4}}
  \| \sigma_{\nu}^{1/2}  v_{\xi}  \|^{1/4}_{L^{2}} 
  \| \sigma_{\nu}^{1/2}  \nabla v_{\xi}  \|^{7/4}_{L^{2}}
  \\
  =&
  Z
  \|  w_{\xi}  \|_{L^{4}}
  a_{\nu}^{1/8}\dot{B}_{\bar{s}, \nu} ^{7/8}
  \le
  \frac{\dot{B}_{\bar{s},\nu}}{8}
  +
  \|  w_{\xi} \|^{8}_{L^{4}}  
  + 
  Z\dot{B}_{\bar{s}, \nu} a_{\nu},
\end{split}
\end{equation}
and
\begin{equation}\label{SeventhInBis}
\begin{split}
  \textstyle
  18 \int_{\RT} \sigma_{\nu} |v_{\xi}| |\nabla v_{\xi}| |w_{\xi}| 
  \le &
  18  \| \sigma_{\nu}^{1/2}  \nabla v_{\xi}  \|_{L^{2}}
  \|  w_{\xi}  \|_{L^{4}} 
  \| \sigma_{\nu}^{1/2}   v_{\xi}   \|_{L^{4}}
  \\
  \le&
  Z
  \| \sigma_{\nu}^{1/2}  \nabla v_{\xi}  \|_{L^{2}}
  \| w_{\xi}\|_{L^{4}}
  \| \sigma_{\nu}^{1/2}   \nabla v_{\xi}  \|^{3/4}_{L^{2}}
  \| \sigma_{\nu}^{1/2}  v_{\xi}  \|^{1/4}_{L^{2}}
  \\
  =&
  Z
  \|  w_{\xi}  \|_{L^{4}}
  a_{\nu}^{1/8}\dot{B}_{\bar{s}, \nu} ^{7/8}
  \le
  \frac{\dot{B}_{\bar{s},\nu}}{8}
  +
  \|  w_{\xi} \|^{8}_{L^{4}}  
  + 
  Z\dot{B}_{\bar{s}, \nu} a_{\nu},
\end{split}
\end{equation}
%\begin{eqnarray}\nonumber
%18 \int_{\RT}\sigma_{\epsilon} |v_{\xi}|^{2} |\nabla w| & \leq & 18 \| %\sigma_{\epsilon}^{-\bar{\beta}} |\nabla w| \|_{L^{q}}
%\| \sigma_{\epsilon}^{(1+\bar{\beta})/2} v_{\xi}\|_{L^{2\bar{q}/(\bar{q}-1)}}^{2} \\ %\nonumber
%&\lesssim &  \| \sigma_{\epsilon}^{-\bar{\beta}} |\nabla w| \|_{L^{\bar{q}}}
%\| \sigma_{\epsilon}^{1/2} v_{\xi} \|_{L^{2}} 
%\| \sigma_{\epsilon}^{1/2} \nabla v_{\xi} \|_{L^{2}},
%\end{eqnarray}
%provided that $\bar{\beta} = 1 - 3/\bar{q}$, so
%\begin{equation} 
%\int_{\RT}\sigma_{\epsilon} |v_{\xi}|^{2} |\nabla w| \leq  
%\| \sigma_{\epsilon}^{-\bar{\beta}} \nabla w \|^{2}_{L^{\bar{q}}} 
%+ C^{*} \dot{B}_{\epsilon}a_{\epsilon}.
%\end{equation}
We now plug the previous inequalities in (\ref{PutIn2})
and we obtain
\begin{equation*}
\begin{split}
  a_{\nu}(t)
  \psi_{\nu}(t)
  &
  \textstyle
  -a_{\nu}(\bar{s})
  +2\int_{\bar{s}}^{t}
  \dot{B}_{\bar{s},\nu}(s)\psi_{\nu}(s)ds
  \le
  \\
  \le&
  \textstyle
  \int_{\bar{s}}^{t}
  \psi_{\nu}(s)
  [\frac58\dot{B}_{\bar{s},\nu}(s)
  +6Z \dot{B}_{\bar{s}, \nu} a_{\nu}
  +|\xi|^{2}
  + 3\|  w_{\xi} \|^{8}_{L^{4}} 
  -k\dot{B}_{\bar{s}, \nu} a_{\nu}
  ]ds.
\end{split}
\end{equation*}
We subtract the first term at the right hand side
from the left hand side;
then we choose $k=6Z$ and note that
\begin{equation*}
  \textstyle
  \int_{\bar{s}}^{t}
  \dot{B}_{\bar{s},\nu}\psi_{\nu}
  \equiv
  -\frac{1}{6Z}
  \int_{\bar{s}}^{t}
  \dot{\psi}_{\nu}=
  \frac{\psi_{\nu}(\bar{s})-\psi_{\nu}(t)}{6Z}
  =
  \frac{1-\psi_{\nu}(t)}{6Z}
\end{equation*}
so that, for $\bar{s}\le t\le \bar{s}+T$, we obtain
\begin{equation}\label{LastIn}
  \textstyle
  a_{\nu}(t)
  \psi_{\nu}(t)
  -a_{\nu}(\bar{s})
  +
  \frac{1-\psi_{\nu}(t)}{6Z}
  \le
  |\xi|^{2} \int_{\bar{s}}^{t}
  \psi_{\nu}(s)ds+
  3 \int_{\bar{s}}^{t}
  \|  w_{\xi}(s,\cdot) \|^{8}_{L^{4}} ds.
\end{equation}
Consider now the increasing function, for $t\ge \bar{s}$,
\begin{equation}\label{eq:defBs}
  B_{\bar{s}}(t) := 
  \int_{\bar{s}}^{ t}
  \int_{\mathbb{R}^{3}} 
  |y|^{-1}|\nabla v_{\xi}(s,y)|^{2}dy ds
\end{equation}
which may become infinite at some point $t=t_{0}>\bar{s}$.
By the definition of $\bar{s}$, we know that 
$B_{\bar{s}}(t)\ge M$ for $t\ge \bar{s}+T/M$; 
since $B_{\bar{s},\nu}\to B_{\bar{s}}$ pointwise as
$\nu\to0$, we have also
\begin{equation*}
  \textstyle
  B_{\bar{s},\nu}(s)\ge \frac M2
  \quad\text{for}\quad 
  s\ge \bar{s}+\frac TM
  \quad\text{and}\quad 
  \nu
  \quad\text{small enough}.
\end{equation*}
Using this estimate for $s\ge \bar{s}+T/M$ and the obvious one
$B_{\bar{s},\nu}\ge0$ for $s\le \bar{s}+T/M$, we have easily
\begin{equation}\label{eq:estintpsi}
  \textstyle
  \int_{\bar{s}}^{\bar{s} + T} \psi_{\nu} (s) \ ds =
  \int_{\bar{s}}^{\bar{s} + T} e^{- 3Z B_{\bar{s},\nu}(s)} \ ds
  \le
  \frac{T}{M} + e^{-3ZM} \left( T - \frac{T}{M} \right)
  \le
  \frac{2T}{M}
\end{equation}
(here we assumed $Z\ge1$). We now use
the estimate
$a(\bar{s}) \le Ze^{4M^{2}}(\theta_{2}\epsilon)^{2}$
(proved in \eqref{BoundBySmallnessBis}) and note that
we can assume
\begin{equation}\label{eq:cond1}
  \theta_{2}\epsilon\le1
  \quad\implies\quad
  a(\bar{s}) 
  \le
  Ze^{4M^{2}}\theta_{2}\epsilon.
\end{equation}
Moreover by \eqref{BoundOnW} we have also
\begin{equation*}
  \|w_{\xi}\|_{L^{8}_{t}L^{4}_{x}}^{8} 
  =
  \|w\|_{L^{8}_{t}L^{4}_{x}}^{8} 
  \le Z \theta_{1} \epsilon
\end{equation*}
so that inequality \eqref{LastIn} implies
\begin{equation*}
  \textstyle
  (a_{\nu}(t)-\frac{1}{6Z})\psi_{\nu}(t)
  +\frac{1}{6Z}
  -3Z \theta_{1}\epsilon
  -Ze^{4M^{2}}\theta_{2}\epsilon
  -2|\xi|^{2}\frac{T}{M}
  \le
  0
\end{equation*}
or equivalently
\begin{equation}\label{eq:Int2}
  \textstyle
  a_{\nu}(t)
  +(\frac{1}{6Z}
  -3Z \theta_{1}\epsilon
  -Ze^{4M^{2}}\theta_{2}\epsilon
  -2|\xi|^{2}\frac{T}{M})
  e^{6Z B_{\bar{s},\nu}(t)}
  \le
  \frac{1}{6Z}.
\end{equation}
We now assume $\epsilon$ is so small that
\begin{equation}\label{eq:smallep}
  \textstyle
  3Z \theta_{1}\epsilon\le \frac{1}{30Z},
  \qquad
  Z e^{4M^{2}} \theta_{2}\epsilon\le \frac{1}{30Z},
\end{equation}
(this implies also \eqref{eq:cond1} and \eqref{eq:secondZ}),
so that \eqref{eq:Int2} implies
\begin{equation}\label{eq:Int3}
  \textstyle
  a_{\nu}(t)
  +(\frac{1}{10Z}
  -2|\xi|^{2}\frac{T}{M})
  e^{6Z B_{\bar{s},\nu}(t)}
  \le
  \frac{1}{6Z}.
\end{equation}
Assume in addition that $\xi$ satisfies
\begin{equation}\label{eq:condxi2}
  \textstyle
  (\frac{1}{10Z}
    -2|\xi|^{2}\frac{T}{M})>0
  \quad {\it i.e.} \quad 
  |\xi|^{2}T<\frac{M}{20Z}.
\end{equation}
Note that this condition is stronger than the first condition
\eqref{ResrtCondFirst} on $\xi$, {\it i.e.}
$|\xi|^{2}\bar{s}\le M^{2}$, since $M,Z\ge1$ and $\bar{s}\le T$.
Then, if we let $\nu\to0$, we have
\begin{equation*}
  \textstyle
  a_{\nu}(t)\to a(t):=
  \int_{\mathbb{R}^{3}} |y|^{-1}|v_{\xi}(t,y)|^{2}dy,
  \quad
  B_{\bar{s}, \nu}(t) \to
  B_{\bar{s}}(t):=
  \int_{\bar{s}}^{t} \int_{\mathbb{R}^{3}} 
  |y|^{-1} |\nabla v_{\xi}(s,y)|^{2}dyds
\end{equation*}
and \eqref{eq:Int3} implies,
for all $\bar{s}\le t\le \bar{s}+T$
\begin{equation}\label{eq:Int4}
  \textstyle
  a(t)
  +(\frac{1}{10Z}
  -2|\xi|^{2}\frac{T}{M})
  e^{6Z B_{\bar{s}}(t)}
  \le
  \frac{1}{6Z}.
\end{equation}
In particular we see that $a(t)$ and $B_{\bar{s}}(t)$ are
finite for $\bar{s}\le t\le \bar{s}+T$. Since by the definition of
$\bar{s}$ we already know that $B(\bar{s})\le 2M^{2} < + \infty$,
we conclude that 
\begin{equation*}
  B(s)< + \infty
  \quad\text{for all}\quad 
  0\le s\le \bar{s}+T.
\end{equation*}
In particular we have
\begin{equation}
  \textstyle
  B(T)=
  \int_{0}^{T}\int|y|^{-1}|\nabla v_{\xi}(s,y)|^{2}dyds
  \equiv
  \int_{0}^{T}\int|x-s \xi|^{-1}|\nabla v(s,x)|^{2}dyds
  < + \infty
\end{equation}
and then the same argument used to conclude the proof in the
first case ($\bar{s}=T$) gives also in the
second case ($\bar{s}<T$) that $L(T,\xi)$
is a regular set, provided
\eqref{eq:smallep}, \eqref{eq:condxi2} are satisfied.

\subsection{Conclusion of the proof}

Summing up, we have proved that there exists a universal
constant $Z$ such that for any $\widetilde{p}\in[2,4)$,
$M\ge1$, $T>0$ and 
$\xi\in \mathbb{R}^{3}\setminus0$ the following holds:
if $\epsilon=[u_{0}]_{\widetilde{p}}$ 
is small enough to satisfy \eqref{eq:smallep},
and $T,\xi$ are such that \eqref{eq:condxi2} holds,
then the segment $L(T,\xi)$ is a regular set
for the weak solution $u$.

Now define
\begin{equation*}
  \delta=\frac{1}{90Z^{2}}.
\end{equation*}
Then \eqref{eq:smallep} is implied by
\begin{equation}\label{eq:smallepdel}
  \theta_{1}\epsilon\le \delta,
  \qquad
  \theta_{2}\epsilon\le \delta e^{-4M^{2}}
\end{equation}
while \eqref{eq:condxi2} is implied by
\begin{equation*}
  \textstyle
  |\xi|^{2}T<M \delta
  \quad\iff\quad
  T>\frac{|T \xi|^{2}}{M \delta}
\end{equation*}
or equivalently
\begin{equation}\label{eq:}
  \textstyle
  (T,T \xi)\in\Pi_{M \delta},
  \qquad
  \Pi_{M \delta}:=
  \{(t,x)\in \mathbb{R}^{+}\times \mathbb{R}^{3}
  \colon t>\frac{|x|^{2}}{M \delta}\}.
\end{equation}
In other words, if $\epsilon$ satisfies 
\eqref{eq:smallepdel} and $(T,T \xi)$ belongs
to the paraboloid $\Pi_{M \delta}$, then
$L(T,\xi)$ is a regular set. Since $\Pi_{M \delta}$
is the union of such segments for arbitrary $T>0$,
we conclude that
$\Pi_{M \delta}$ is a regular set for the solution $u$,
provided \eqref{eq:smallepdel} holds.

\end{document}